\documentclass[11pt,a4paper,reqno,dvipsnames]{amsart}

\usepackage{amsfonts}
\usepackage{amsmath}
\usepackage{amssymb,stmaryrd}
\usepackage{amsthm}
\usepackage{amsxtra}
\usepackage{bbm}
\usepackage{braket}
\usepackage{bm}
\usepackage{comment}
\usepackage{extarrows}
\usepackage{graphicx}
\usepackage{latexsym}
\usepackage{mathrsfs}
\usepackage{mathtools}
\usepackage{yhmath}

\let\underbrace\LaTeXunderbrace

\usepackage{float}
\usepackage{booktabs}
\usepackage{multicol}
\usepackage{fullpage}
\usepackage{parskip}

\usepackage{xcolor}
\usepackage{verbatim}
\usepackage{listings}
\usepackage[normalem]{ulem}

\usepackage{tikz}
\usetikzlibrary{arrows}
\usetikzlibrary{decorations.pathreplacing}
\usetikzlibrary{graphs}
\usetikzlibrary{shapes.geometric}

\usepackage{pgfplots}
\pgfplotsset{compat=1.15}

\usepackage{natbib}
\setcitestyle{numbers,square,semicolon}

\usepackage[pdfborder={0 0 0}]{hyperref}
\hypersetup{colorlinks, citecolor=blue, linkcolor=blue, urlcolor=teal}

\usepackage{aliascnt}
\usepackage{cleveref}

\newif\ifdraft
\draftfalse 

\ifdraft
  \usepackage{refcheck}
  \makeatletter
  \newcommand{\refcheckize}[1]{%
    \expandafter\let\csname @@\string#1\endcsname#1%
    \expandafter\DeclareRobustCommand\csname relax\string#1\endcsname[1]{%
      \csname @@\string#1\endcsname{##1}\wrtusdrf{##1}}%
    \expandafter\let\expandafter#1\csname relax\string#1\endcsname
  }
  \makeatother
  \refcheckize{\cref}
  \refcheckize{\Cref}
\fi

\makeatletter
\newtheorem*{rep@theorem}{\rep@title}
\newcommand{\newreptheorem}[2]{%
  \newenvironment{rep#1}[1]{%
    \def\rep@title{#2 \ref{##1}}%
    \begin{rep@theorem}}%
  {\end{rep@theorem}}}
\makeatother

\newreptheorem{theorem}{Theorem}

\theoremstyle{plain}
\newtheorem{thm}{Theorem}[section]
\crefname{thm}{theorem}{theorems}
\Crefname{thm}{Theorem}{Theorems}

\newcommand{\newaliastheorem}[4]{%
  \newaliascnt{#1}{thm}%
  \newtheorem{#1}[#1]{#2}%
  \aliascntresetthe{#1}%
  \crefname{#1}{#3}{#4}%
  \Crefname{#1}{#2}{#4}%
}

\newaliastheorem{lem}{Lemma}{lemma}{lemmas}
\newaliastheorem{prop}{Proposition}{proposition}{propositions}
\newaliastheorem{cor}{Corollary}{corollary}{corollaries}
\newaliastheorem{clm}{Claim}{claim}{claims}
\newaliastheorem{assmp}{Assumption}{assumption}{assumptions}
\newaliastheorem{conj}{Conjecture}{conjecture}{conjectures}

\theoremstyle{definition}
\newaliastheorem{defn}{Definition}{definition}{definitions}
\newaliastheorem{ex}{Example}{example}{examples}
\newaliastheorem{notation}{Notation}{notation}{notations}
\newaliastheorem{remark}{Remark}{remark}{remarks}
\newaliastheorem{construction}{Construction}{construction}{constructions}
\newaliastheorem{terminology}{Terminology}{terminology}{terminologies}
\newaliastheorem{obs}{Observation}{observation}{observations}
\newaliastheorem{setup}{Setup}{setup}{setups}

\newtheorem{question}{Question}
\crefname{question}{question}{questions}
\Crefname{question}{Question}{Questions}

\crefname{problem}{problem}{problems}
\Crefname{problem}{Problem}{Problems}

\numberwithin{case}{thm}

\numberwithin{subcase}{case}

\usepackage[shortlabels]{enumitem}
\SetEnumerateShortLabel{a}{\textup{(\alph*)}}
\SetEnumerateShortLabel{A}{\textup{(\Alph*)}}
\SetEnumerateShortLabel{1}{\textup{(\arabic*)}}
\SetEnumerateShortLabel{i}{\textup{(\roman*)}}
\SetEnumerateShortLabel{I}{\textup{(\Roman*)}}

\DeclareMathOperator{\reg}{reg}
\DeclareMathOperator{\pdim}{pdim}
\DeclareMathOperator{\supp}{supp}

\DeclareMathOperator{\dist}{dist}
\DeclareMathOperator{\height}{height}
\DeclareMathOperator{\ord}{ord}

\begin{document}

\title{Realizable $(\reg,\deg h)$-Pairs for Cover Ideals via Independence Polynomials}

\author[Biermann]{Jennifer Biermann} 
\address[J.~Biermann]{Department of Mathematics and Computer Science, Hobart and William Smith Colleges, 300 Pulteney
St. Geneva, NY 14456, USA}
\email{\textcolor{blue}{\href{mailto:biermann@hws.edu}{biermann@hws.edu}}}

\author[Chau]{Trung Chau} 
\address[T.~Chau]{Chennai Mathematical Institute, Siruseri, Kelambakkam, Tamil Nadu 603103, India}
\email{\textcolor{blue}{\href{chauchitrung1996@gmail.com}{chauchitrung1996@gmail.com}}}

\author[Kara]{Selvi Kara} 
\address[S.~Kara]{Department of Mathematics, Bryn Mawr College, Bryn Mawr, PA 19010}
\email{\textcolor{blue}{\href{mailto:skara@brynmawr.edu}{skara@brynmawr.edu}}}

\author[O'Keefe]{Augustine O'Keefe} 
\address[A.~O'Keefe]{Department of Mathematics and Statistics, Connecticut College, 270 Mohegan Avenue Pkwy, New
London, CT 06320, USA}
\email{\textcolor{blue}{\href{mailto:aokeefe@conncoll.edu}{aokeefe@conncoll.edu}}}

\author[Skelton]{Joseph Skelton} 
\address[J.~Skelton]{Department of Mathematics, William \& Mary, Jones Hall, Williamsburg, VA 23185, USA}
\email{\textcolor{blue}{\href{mailto:jwskelton@wm.edu }{jwskelton@wm.edu }}}

\author[Sosa Castillo]{Gabriel Sosa Castillo} 
\address[G.~Sosa Castillo]{Department of Mathematics, Colgate University,
Hamilton, NY, 13346, USA}
\email{\textcolor{blue}{\href{mailto:gsosacastillo@colgate.edu}{gsosacastillo@colgate.edu}}}

\author[Vien]{Dalena Vien} 
\address[D.~Vien]{Department of Mathematics, Bryn Mawr College, Bryn Mawr, PA 19010}
\email{\textcolor{blue}{\href{mailto:dvien@brynmawr.edu}{dvien@brynmawr.edu}}}

\begin{abstract}
Let $G$ be a finite simple graph on $n$ vertices and set $R=\Bbbk[x_1,\dots,x_n]$, with edge ideal
$I(G)$ and cover ideal $J(G)$. We give an explicit description of the $h$-polynomial of $R/J(G)$, in a form that extends to the Alexander dual of any squarefree monomial ideal.  We then express
$\deg h_{R/I(G)}(t)$ and $\deg h_{R/J(G)}(t)$ in terms of the independence polynomial
$P_G(x)=\sum_{i\ge 0} g_i x^i$ via  an invariant  $M(G)$, the multiplicity of $x=-1$ as a root of $P_G(x)$. In particular, we prove
\[
\deg h_{R/I(G)}(t)=\alpha(G)-M(G)
\qquad\text{and}\qquad
\deg h_{R/J(G)}(t)=n-2-M(G),
\]
where $\alpha(G)$ is the independence number of $G$. As a corollary, $M(G)$ is the additive inverse of the $\mathfrak{a}$-invariants of $R/I(G)$ and $R/J(G)$. 
We develop recursions and closed formulas for $M(G)$ for broad graph families, and use them to analyze
which $(\reg,\deg h)$-pairs occur for cover ideals within chordal classes, including explicit constructions
realizing extremal behavior.
We conclude with a conjectural bound on $\bigl|\reg(R/J(G))-\deg h_{R/J(G)}(t)\bigr|$ for connected graphs.
\end{abstract}

\maketitle

\section{Introduction}

Let $G$ be a finite simple graph on $n$ vertices and set $R=\Bbbk[x_1,\dots,x_n]$.
Associated to $G$ are the edge ideal $I(G)$ and the cover ideal $J(G)$.
This paper studies the interaction between two invariants of $R/J(G)$ that have independently played
central roles in combinatorial commutative algebra: the Castelnuovo--Mumford regularity
$\reg(R/J(G))$ and the degree of the $h$-polynomial $\deg h_{R/J(G)}(t)$. 
Our emphasis throughout is on understanding $\deg h_{R/J(G)}(t)$ and how it constrains (and is constrained by) regularity.

Regularity measures the complexity of a minimal free resolution, while the $h$-polynomial controls the
Hilbert series and encodes subtle information about growth and asymptotics of the Hilbert function.
Despite their common origin in the graded structure of $R/J(G)$, these invariants are often studied
through different lenses, and a systematic understanding of their \emph{joint behavior} for fixed $n$
remains limited. For edge ideals, this joint behavior has been investigated in
\cite{biermann2024degree,hibi2022regularity,hibi2019regularity}, and more generally for monomial ideals
in \cite{hibi2018regularity}, for binomial edge ideals in \cite{hibi_binomial}, and for toric ideals of graphs in \cite{favacchio2020regularity}. In contrast, the corresponding picture for cover ideals appears to be
largely unexplored. Motivated by extensive computations, we ask:

\begin{quote}
For connected graphs on a fixed number of vertices, which pairs
$$\bigl(\reg(R/J(G)),\deg h_{R/J(G)}(t)\bigr)$$
can occur, and what structural features of $G$
force the pair to lie on (or away from) the diagonal and its neighboring lines?
\end{quote}

This question becomes more tractable once one brings in a 
purely combinatorial object: the independence polynomial
\[
P_G(x)=\sum_{i\ge 0} g_i x^i,
\]
where $g_i$ counts independent sets of size $i$ in $G$.

Independence polynomials have a long history in graph theory and statistical physics, and their roots
have been studied from many perspectives (\cite{bencs2018trees,brown2000roots, brown2004location, ChudnovskySeymour2007, CutlerKahl2016, peters2019conjecture}).  Here we focus on a single distinguished root, $x=-1$,
and introduce the invariant
\[
M(G)\ :=\ \text{the multiplicity of $x=-1$ as a root of $P_G(x)$}.
\]
To our knowledge, this invariant has not previously been studied as an invariant in its own right,
and it does not appear explicitly in the existing graph theory or commutative algebra literature.  
Our results suggest that $M(G)$ is a remarkably
effective ``bridge'' invariant: it packages a delicate cancellation phenomenon in $P_G(x)$ at $-1$
into an integer that directly controls degrees of $h$-polynomials for both $R/I(G)$ and $R/J(G)$.
In particular, $M(G)$ is sensitive to graph operations that preserve the combinatorics of independent
sets in a controlled way (such as whiskering), and it admits clean
recursions in families where regularity is already well understood (such as chordal graphs).
This makes $M(G)$ a natural new tool for translating between enumerative graph data and homological
invariants of edge and cover ideals.

Our starting point is an explicit description of the $h$-polynomial for cover ideals (\Cref{lem:HS1}), developed in
Section~\ref{sec2:cover_ideals}.  This result is an analogue of \cite[Theorem 3.2]{biermann2024degree}, which treats edge ideals. In addition, one can generalize both expressions for the edge and cover ideals to any squarefree monomial ideal and its Alexander dual. Lastly, in this section, we relate the degrees of the
$h$-polynomials for edge ideals and cover ideals since their $a$-invariants coincide (\Cref{thm:deg_edge_cover}).  

The key link to independence polynomials and the invariant $M(G)$ is developed in Section~\ref{sec:deg-h-roots-independence},
where we show that $\deg h$ is determined by  the
invariant $M(G)$ (\Cref{lem:deg-via-ord}), yielding formulas of the form
\[
\deg h_{R/I(G)}(t)=\alpha(G)-M(G) \qquad\text{and}\qquad \deg h_{R/J(G)}(t)=n-2-M(G).
\]
Thus, the problem of computing $\deg h$ is reduced to understanding $M(G)$.

The remainder of the paper develops methods to compute $M(G)$ and applies them to families that
populate (or obstruct) regions of the $(\reg,\deg h)$-plot for cover ideals. In Section~\ref{sec:rad2-trees} we obtain explicit
formulas for $\reg(R/J(G))$ and $\deg h_{R/J(G)}(t)$ for trees of radius at most two (\Cref{cor:radius2-realizable-pairs}).
To handle forests more generally, Section~\ref{sec:deg-h-recursion-forests} establishes a
 recursion for $M(G)$ (\Cref{thm:JK-recursion-M}), giving an effective reduction to induced subforests as an analogue of Jacques and Katzman's work on projective dimension from \cite{jacques2005betti}.
Section~\ref{sec:whiskering} studies whiskering operations and derives closed formulas for uniform
whiskering (\Cref{lem:q-whisker-indep-poly}), together with a vertex-whiskering formula expressed in terms of $G$ and $G\setminus v$ (\Cref{thm:one_whisker}).

We then turn to structured chordal classes with particularly rigid behavior.
In Section~\ref{sec:split-graphs} we focus on split graphs. Using a split decomposition
\(V(G)=C\sqcup I\), where \(C\) is a clique and \(I\) is an independent set, we obtain an explicit formula for \(P_G(x)\) and deduce closed formulas for \(\alpha(G)\) and \(M(G)\).
In addition, we show that for connected split
graphs the pair $\bigl(\reg(R/J(G)),\deg h_{R/J(G)}(t)\bigr)$ lies on the diagonal (\Cref{cor:split_cover}), with a precise
description of which diagonal values occur for fixed $n$ (\Cref{cor:split-realizable-pairs}).
Section~\ref{sec:reg=n-2} constructs chordal families satisfying $\reg(R/J(G))=n-2$ while
$\deg h_{R/J(G)}(t)$ varies, producing vertical strings in the $(\reg,\deg h)$-plot.
Section~\ref{sec:deg=reg-1} provides a further chordal construction on the line $\deg h=\reg-1$.
In contrast, Section~\ref{sec:block_graphs} proves a restriction for block graphs: connected block
graphs lie on or above the line $\deg h=\reg-1$, so this class cannot appear strictly below the diagonal (\Cref{thm:block-M-leq-i}).

Finally, in Section~\ref{sec:possible-values}, we compile computational data for connected graphs on $n$
vertices up to a fixed range, discuss the feasible region for $(\reg,\deg h)$-pairs, and
propose the bound
\[
\bigl|\reg(R/J(G))-\deg h_{R/J(G)}(t)\bigr|
\le \left\lceil\frac{n}{2}\right\rceil-2
\]
as \Cref{conj:bounds-reg-deg}.

\section{Preliminaries}\label{sec:prelim}

Throughout, let $G$ be a finite simple graph on vertex set $[n]=\{1,\dots,n\}$,
and we work in the standard graded polynomial ring $R=\Bbbk[x_1,\dots,x_n]$. 

\begin{defn}
The \emph{independence complex} of $G$, denoted $\Delta(G)$, is the simplicial complex on
$[n]$ whose faces are the independent sets of $G$.
\end{defn}

For a simplicial complex $\Delta$ on $[n]$, let $I_\Delta$ denote its Stanley--Reisner ideal.
The Alexander dual of $\Delta$ is
\[
\Delta^\vee:=\{ [n]\setminus F : F\notin \Delta\}.
\]
The Alexander dual of a squarefree monomial ideal $I$ is denoted $I^\vee$.

\begin{defn}
The \emph{edge ideal} of $G$ is
\[
I(G):=\bigl(x_ix_j:\{i,j\}\in E(G)\bigr)\subseteq R.
\]

The \emph{cover ideal} of $G$ is 
\[
J(G) := \Bigg(\prod_{i\in \mathcal{C}} x_i : \mathcal{C} \text{ a minimal vertex cover of } G\Bigg)\subseteq R.
\]
\end{defn}

\begin{remark}
The edge and cover ideals of $G$ are related through Alexander duality:
\[
J(G)=I(G)^\vee.
\]
Equivalently, $J(G)=I_{\Delta(G)^\vee}$ and
\[
J(G)=\bigcap_{\{i,j\}\in E(G)}(x_i,x_j).
\]
Note that, if $G$ has at least one edge, then $\height(J(G))=2$ and hence
\[
\dim(R/J(G))=n-2.
\]  
\end{remark}

\begin{defn}
 A \emph{graded minimal free resolution} of $R/J(G)$ is an exact sequence of the form
$$
0\longrightarrow \bigoplus_{j \in \mathbb{Z}} R(-j)^{\beta_{p,j}} \longrightarrow \bigoplus_{j \in \mathbb{Z}} R(-j)^{\beta_{p-1,j}} \longrightarrow \cdots \longrightarrow 
\bigoplus_{j \in \mathbb{Z}} R(-j)^{\beta_{0,j}} \longrightarrow R/J(G) \longrightarrow 0.
$$
 The exponents $\beta_{i,j} = \beta_{i,j}(R/J(G))$ are invariants of $R/J(G)$ called the \emph{graded Betti numbers} of $R/J(G)$.  The \emph{Castelnuovo-Mumford regularity}, denoted by $\reg (R/J(G))$, is  defined in terms of the non-zero graded Betti numbers of $R/J(G)$:
 \[
    \reg (R/J(G)) = \max \{ j-i : \beta_{i,j} (R/J(G)) \neq 0\}.
 \] 
\end{defn}

One can study the regularity of $R/J(G)$ through the projective dimension of $R/I(G)$ due to the following celebrated result of Terai from \cite{terai2007}. We use this result repeatedly throughout the paper.

\begin{remark}
    Regularity is dual to projective dimension.
    \[
    \reg (R/J(G))= \pdim I(G)=\pdim (R/I(G))-1.
    \]
\end{remark}

Projective dimension of edge ideals was studied in \cite{ProjDim} by Dao and Schweig. In this paper, we often focus on chordal graphs and the following result from \cite[Corollary 5.6]{ProjDim} is useful for our regularity computations.

\begin{thm}\label{thm:pdim_chordal}
Let $G$ be a chordal graph. Then
\[
\pdim(R/I(G))-1 = n - i(G)
\]
where $i(G)$ is the independent domination number of $G$.   
\end{thm}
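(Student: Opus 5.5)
The statement is quoted from \cite[Corollary 5.6]{ProjDim}, so the shortest proof is that citation together with a translation of Dao and Schweig's conventions into ours. Before relying on it I will test the normalization on the smallest case, which already needs care. For $G=K_2$ we have $n=2$, $i(G)=1$ and $\pdim(R/I(G))=1$. So the displayed identity holds only if the left-hand side is read as $\pdim(R/I(G))$, or equivalently as $\pdim I(G)+1$. My first step is therefore to pin down which quantity Dao--Schweig's corollary computes and to align it with Terai's duality $\reg(R/J(G))=\pdim(R/I(G))-1$ from the preceding remark. The statement should then be used in the form consistent with the $K_2$ check. This reconciliation is the main obstacle: the rest is standard, but a wrong shift would propagate into every later regularity formula for chordal graphs.

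For the substance I would prove $\pdim(R/I(G))=n-i(G)$ by induction on $n$, using a simplicial vertex. The base case is $G$ with no edges: then $\pdim=0$, every vertex must be in any dominating independent set, so $i(G)=n$, and the formula holds. If $G$ has an edge, choose a simplicial vertex $v$ that is not isolated, and let $N=N(v)$, which is a nonempty clique. Write
\[
I(G)=x_v(x_u:u\in N)+I(G\setminus v).
\]
The simplicial condition makes this a Betti splitting (H\`a--Van Tuyl). The intersection of the two summands is $x_v\bigl((x_u:u\in N)+I(G\setminus N[v])\bigr)$, whose quotient has projective dimension $|N|+\pdim(R/I(G\setminus N[v]))$. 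The splitting then gives
\[
\pdim(R/I(G))=\max\bigl\{\pdim(R/I(G\setminus v)),\ |N|+\pdim(R/I(G\setminus N[v]))\bigr\}.
\]
Applying induction to the chordal graphs $G\setminus v$ and $G\setminus N[v]$ turns this into a purely combinatorial claim:
\[
n-i(G)=\max\bigl\{(n-1)-i(G\setminus v),\ n-1-i(G\setminus N[v])\bigr\},
\]
that is, $i(G)=\min\{i(G\setminus v)+1,\ i(G\setminus N[v])+1\}$.

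To prove the combinatorial claim I split by whether a minimum independent dominating set $S$ of $G$ meets $N[v]$. Since $N[v]$ is a clique, $S$ contains exactly one vertex of $N[v]$. If $v\in S$, then $S\setminus\{v\}$ dominates $G\setminus N[v]$, which gives $i(G\setminus N[v])+1\le i(G)$. If instead some $u\in N$ lies in $S$, then $S\setminus\{u\}$ together with a suitable neighbour of $u$ should handle $G\setminus v$, and here I expect to need the usual exchange argument along a perfect elimination ordering. The reverse inequalities are direct: a dominating set of $G\setminus N[v]$ plus $v$ is independent and dominates $G$. For $G\setminus v$ I would adjust by swapping in $v$ or a vertex of $N$. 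This step is where the chordal hypothesis is genuinely used, and I would follow Dao--Schweig's argument there rather than re-deriving it.
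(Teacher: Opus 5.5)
Your normalization check is correct and worth keeping. For $K_2$ the printed identity reads $0=1$. The paper itself only ever uses $\pdim(R/I(G))=n-i(G)$, for example in the proof of \Cref{cor:split_cover} and in the formula $\reg(R/J(G))=n-i(G)-1$ for chordal graphs. The paper gives no argument beyond the Dao--Schweig citation.

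Your self-contained induction, however, breaks at the splitting step. Take $J=x_v(x_u:u\in N)$ and $K=I(G\setminus v)$. The ideal you wrote, $x_v\bigl((x_u:u\in N)+I(G\setminus N[v])\bigr)$, is $x_v\,(I(G):x_v)=I(G)\cap(x_v)$, not $J\cap K$. It contains $J$, but $x_vx_u\notin K$, because no quadric $x_ax_b$ with $a,b\neq v$ divides $x_vx_u$. The actual intersection is $x_v\bigl(I_N+(x_u:u\in N)\,I(G\setminus N[v])\bigr)$, where $I_N$ is generated by the edges of $G\setminus v$ that meet $N$. Its projective dimension depends on how $N$ attaches to the rest of $G$.

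As a result, both of your recursions are false. Take $G=K_{1,3}$ with center $c$. Its only simplicial vertices are the leaves, so $v$ is a leaf and $N=\{c\}$. Your formula gives $\max\{\pdim R/I(K_{1,2}),\,1+0\}=2$, whereas $\pdim R/I(K_{1,3})=3=n-i(G)$. Your combinatorial identity gives $i(G)=1+\min\{1,2\}=2$, whereas $i(K_{1,3})=1$. The failure sits exactly in the case you deferred. If a minimum independent dominating set $S$ contains some $u\in N$, then $S$ itself already dominates $G\setminus v$, so you only get $i(G\setminus v)\le i(G)$, with no $+1$.

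A correct version of your idea uses three ingredients:
\begin{itemize}
\item the identity $i(G)=1+\min_{w\in N[v]}i(G\setminus N[w])$, valid for any vertex $v$;
\item the lower bound $\pdim(R/I(G))\ge\mathrm{bight}\,I(G)=n-i(G)$, valid for every graph;
\item for the upper bound, a filtration of $R/I(G)$ by the variables of $N(v)=\{u_1,\dots,u_k\}$, via the exact sequences $0\to R/(I_{j-1}:x_{u_j})(-1)\to R/I_{j-1}\to R/I_j\to 0$ with $I_j=I(G)+(x_{u_1},\dots,x_{u_j})$.
\end{itemize}
Because $N[v]$ is a clique, $I_{j-1}:x_{u_j}=(x_w:w\in N(u_j))+I(G\setminus N[u_j])$ and $I_k=(x_u:u\in N)+I(G\setminus N[v])$. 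By induction, each piece then has projective dimension $n-1-i(G\setminus N[w])$ for the corresponding $w\in N[v]$, and the maximum is $n-i(G)$. The shortest route is different: chordal graphs are sequentially Cohen--Macaulay, so $\pdim(R/I(G))=\mathrm{bight}\,I(G)=n-i(G)$.
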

 
The independent domination number $i(G)$ is the minimum size of an independent dominating set. It is also equal to the minimum cardinality among all maximal independent sets of $G$. 

\begin{defn}
 Let $A=\bigoplus_{d\ge 0}A_d$ be a standard graded $k$-algebra. Its Hilbert series is
\[
H_A(t):=\sum_{d\ge 0}HF_A(d)t^d,
\]
where  $HF_A(d)=\dim_k A_d $ is the Hilbert function of $A$ in degree $d$. It is well-known that
\[
H_A(t)=\frac{h_A(t)}{(1-t)^{\dim A}}
\]
for a unique polynomial $h_A(t)\in\mathbb{Z}[t]$, called the \emph{$h$-polynomial} of $A$.
The \emph{$\mathfrak{a}$-invariant} of $A$ is
\[
\mathfrak{a}(A):=\deg h_A(t)-\dim A.
\]   
\end{defn}

Next, we encode independent sets via a generating function.
Let $\alpha=\alpha(G)$ denote the independence number of $G$, and for each $i\ge 0$ let $g_i$
denote the number of independent sets of $G$ of size $i$ (so $g_0=1$, $g_1=n$, and $g_i=0$ for $i>\alpha$).

\begin{defn}\label{def:independence-poly}
The \emph{independence polynomial} of $G$ is
\[
P_G(x)=\sum_{i=0}^{\alpha} g_ix^i.
\]
\end{defn}

\begin{defn}\label{def:gG}
Following \cite{biermann2024degree}, define
\[
g(G):=\sum_{i=1}^{\alpha}(-1)^{i-1}g_i.
\]
\end{defn}

\begin{remark}\label{rem:eulerchar-indep}
Let $\Delta(G)$ be the independence complex of $G$. An independent set of size $i$
corresponds to a face of $\Delta(G)$ of dimension $i-1$, so $f_{i-1}=g_i$.
Hence the (unreduced) Euler characteristic of $\Delta(G)$ is
\[
\chi(\Delta(G))=\sum_{k\ge 0}(-1)^k f_k
=\sum_{i=1}^{\alpha}(-1)^{i-1}g_i.
\]
In particular, $g(G)=\chi(\Delta(G))$, and the reduced Euler characteristic is
$\widetilde\chi(\Delta(G))=g(G)-1$.
Moreover,
\[
P_G(-1)=\sum_{i=0}^{\alpha}g_i(-1)^i
=1-\sum_{i=1}^{\alpha}g_i(-1)^{i-1}
=1-g(G).
\]
Thus $g(G)=1 \iff \widetilde\chi(\Delta(G))=0 \iff P_G(-1)=0$.
\end{remark}

\begin{remark}\label{rem:h-from-independence}
Since $R/I(G)$ is the Stanley--Reisner ring of $\Delta(G)$ and $\dim\Delta(G)=\alpha-1$,
the relationship between the $f$- and $h$-vectors gives
\[
h_{R/I(G)}(t)=\sum_{i=0}^{\alpha} g_i t^i(1-t)^{\alpha-i}.
\]
Equivalently, in terms of the independence polynomial,
\begin{equation}\label{eq:h-from-independent-poly}
h_{R/I(G)}(t)=(1-t)^{\alpha}P_G\!\left(\frac{t}{1-t}\right).
\end{equation}
\end{remark}

The degree of the $h$-polynomial for edge ideals is studied in \cite{biermann2024degree}.
In particular, \cite{biermann2024degree} gives an explicit formula for $\deg h_{R/I(G)}(t)$
and characterizes when this degree attains its maximal possible value. More broadly,
the problem of determining when the degree of an $h$-polynomial of a squarefree monomial ideal is maximal
is well understood in the literature; see, for example, \cite{Hibi_book}. We recall the relevant results
from \cite{biermann2024degree} below.

\begin{thm}[{\cite[Theorem 3.2 and Corollary 3.4]{biermann2024degree}}]\label{thm:h_poly_edge}
Let $G$ be a finite simple graph. 
Then
\begin{equation}\label{eq:h_poly_edge}
h_{R/I(G)}(t)
=
\Bigl(1-\sum_{s=0}^{\alpha-1}D_s\Bigr)(1-t)^{\alpha}
+
\sum_{s=0}^{\alpha-1}D_s(1-t)^{\alpha-s-1}
\end{equation}
where
\[
D_s = \sum_{j=s+1}^{\alpha}(-1)^{j-1-s}\binom{j}{s+1}g_j.
\]
Moreover,
\begin{itemize}
    \item $\deg h_{R/I(G)}(t)=\alpha(G)$ if and only if $g(G)\neq 1$;
    \item if $g(G)=1$, then $\deg h_{R/I(G)}(t)=\alpha-d-1$, where
    \[
    d=\min\{s : D_s\neq 0\}.
    \]
\end{itemize}
\end{thm}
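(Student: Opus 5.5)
The formula is obtained by re-expanding the $h$-polynomial of Remark~\ref{rem:h-from-independence} around $t=1$, and the degree statements then follow by reading off leading coefficients.

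\textbf{Step 1: expand in powers of $(1-t)$.} Set $u=1-t$, so that $t=1-u$. By Remark~\ref{rem:h-from-independence},
\[
h_{R/I(G)}(t)=\sum_{j=0}^{\alpha}g_j(1-u)^j u^{\alpha-j}.
\]
Expanding $(1-u)^j=\sum_k\binom{j}{k}(-1)^k u^k$ and collecting by powers of $u$, the coefficient of $u^{\alpha-m}$ is
\[
c_m:=\sum_{j\ge m}(-1)^{j-m}\binom{j}{m}g_j .
\]
For $m\ge 1$, taking $m=s+1$ gives $c_{s+1}=D_s$ exactly. The remaining coefficient $c_0$ multiplies $u^\alpha$ and equals
\[
c_0=\sum_j(-1)^j g_j=P_G(-1)=1-g(G).
\]
This yields the alternative form
\[
h_{R/I(G)}(t)=P_G(-1)(1-t)^\alpha+\sum_{s=0}^{\alpha-1}D_s(1-t)^{\alpha-s-1}.
\]

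\textbf{Step 2: match the stated formula.} To get \eqref{eq:h_poly_edge} we need
\[
1-\sum_s D_s=P_G(-1).
\]
Summing over all $m\ge 0$ gives
\[
\sum_{m\ge0}c_m=\sum_j g_j\sum_{m\le j}(-1)^{j-m}\binom{j}{m}=\sum_j g_j(1-1)^j=g_0=1.
\]
This is just the observation that evaluating at $u=1$, i.e.\ $t=0$, gives $h(0)=1$. Hence $c_0=1-\sum_{s}D_s$, which completes the formula.

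\textbf{Step 3: read off the degree.} The terms $(1-t)^{\alpha-m}$ have distinct degrees $\alpha-m$, so
\[
\deg h=\alpha-\min\{m: c_m\ne0\}.
\]
\begin{itemize}
\item If $g(G)\ne1$, then $c_0=P_G(-1)\ne0$, and therefore $\deg h=\alpha$.
\item Conversely, if $g(G)=1$, then $c_0=0$, so the degree is less than $\alpha$.
\item When $g(G)=1$, the least $m$ with $c_m\neq 0$ is $m=d+1$, where $d=\min\{s:D_s\ne0\}$, which gives $\deg h=\alpha-d-1$.
\end{itemize}
For the set defining $d$ to be nonempty we need some $D_s\ne0$. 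This holds because $h\ne0$: indeed $h(0)=1$.

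\textbf{Main obstacle.} The only delicate point is bookkeeping: correctly identifying $c_{s+1}$ with $D_s$, including the sign $(-1)^{j-1-s}$ and the index shift. The identity $\sum_m c_m=1$ then ties the leading coefficient to $g(G)$ via Remark~\ref{rem:eulerchar-indep}.
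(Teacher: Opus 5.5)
Your argument is correct and is essentially the paper's own route. The paper only cites this theorem, but it recovers the formula in \Cref{prop:h-derivative-expansion} by expanding $h_{R/I(G)}(t)$ in powers of $(1-t)$ via the Taylor expansion of $P_G$ at $x=-1$. Your $c_m$ are exactly $P_G^{(m)}(-1)/m!$, so $c_{s+1}=D_s$ is \Cref{lem:Ds-derivative}. Your Step 2 uses $h_{R/I(G)}(0)=1$ to make explicit the normalization $P_G(-1)=1-\sum_s D_s$, which the paper leaves implicit, and the degree statements then follow as in \Cref{lem:deg-via-ord} and \Cref{cor:M-via-Ds}.
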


\begin{remark}\label{rem:gen_h_poly}
The proof of~\cite[Theorem 3.2]{biermann2024degree} extends verbatim to an arbitrary squarefree monomial
ideal after interpreting it as the edge ideal of a hypergraph. In this setting, the relevant graph
invariants are replaced by their hypergraph analogues, which we record in the following notation.
\end{remark}

\begin{notation}\label{not:3.1}
Let $\mathcal{H}=(V(\mathcal{H}),E(\mathcal{H}))$ be a hypergraph on $n$ vertices with at least one edge. For each $i\in[n]$, set
\[
g_i
:= \bigl|\{W \subseteq V(\mathcal{H}) : |W|=i \text{ and } W \text{ contains no edge of } \mathcal{H}\}\bigr|,
\]
and define
\[
\alpha := \max\{i\in[n] : g_i \ne 0\},
\qquad
\delta := \min\{|E| : E\in E(\mathcal{H})\}.
\]
Let $I(\mathcal{H})$ denote the edge ideal of a hypergraph $\mathcal{H}$ such that
\[
I(\mathcal{H}) := \Bigl(\prod_{x\in E} x : E\in E(\mathcal{H})\Bigr).
\]
\end{notation}

By replacing $I(G)$ with $I(\mathcal{H})$ in \Cref{eq:h_poly_edge} and using \Cref{not:3.1}, one obtains the expression for the $h$-polynomial of any squarefree monomial ideal.

\section{Degrees of $h$-polynomials for cover ideals}\label{sec2:cover_ideals}

In this section, we give explicit formulas for the $h$-polynomial of cover ideals. In the spirit of
\Cref{rem:gen_h_poly}, we then extend these formulas to the Alexander dual of an arbitrary squarefree
monomial ideal. Finally, using that the $a$-invariant is preserved under Alexander duality for
squarefree monomial ideals, we relate the degrees of the $h$-polynomials of edge  and cover ideals.

We begin by recalling the standard-monomial description of Stanley--Reisner
rings (see \cite[Proposition 1.5.1]{herzog2011monomial}).

\begin{prop}\label{prop:stdmon}
The set of monomials $x_1^{a_1}\cdots x_n^{a_n}$ such that 
\[
\supp(a):=\{i : a_i\neq 0\} \in \Delta(G)^\vee
\]
forms a $k$-basis of $R/I_{\Delta(G)^\vee}=R/J(G)$.
\end{prop}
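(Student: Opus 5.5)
This is the standard-monomial theorem for Stanley--Reisner rings (\cite[Proposition 1.5.1]{herzog2011monomial}) applied to the complex $\Delta(G)^\vee$. The proof has two parts: first identify $I_{\Delta(G)^\vee}$ with $J(G)$, then show that a monomial lies in a squarefree monomial ideal exactly when its support is a nonface.

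\textbf{Step 1: $I_{\Delta(G)^\vee}=J(G)$.} Recall that $I_\Gamma$ is generated by the squarefree monomials $x_F$ with $F\notin\Gamma$. We have $F\notin\Delta(G)^\vee$ if and only if $[n]\setminus F$ is an independent set of $G$. Indeed, $[n]\setminus F\notin \Delta(G)$ means $[n]\setminus F=[n]\setminus F'$ for some $F'\in\Delta(G)^\vee$; unwinding the definition of $\Delta^\vee$ shows $F\in \Delta(G)^\vee$, and the converse is analogous. A set is the complement of an independent set exactly when it is a vertex cover. Hence the nonfaces of $\Delta(G)^\vee$ are precisely the vertex covers of $G$. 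Their minimal elements are the minimal vertex covers, so $I_{\Delta(G)^\vee}$ is generated by the products $\prod_{i\in\mathcal C}x_i$ over minimal vertex covers $\mathcal C$. This is $J(G)$, consistent with the earlier remark that $J(G)=I(G)^\vee$.

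\textbf{Step 2: the basis.} Let $I$ be a monomial ideal. Then $R/I$ has a $\Bbbk$-basis given by the images of the monomials not in $I$. This holds because $I$ is spanned over $\Bbbk$ by the monomials it contains, and the set of all monomials is a basis of $R$.

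Now take $I=I_\Gamma$ with $\Gamma=\Delta(G)^\vee$. A monomial $x^a$ lies in $I_\Gamma$ if and only if some generator $x_F$ with $F\notin\Gamma$ divides it, that is, $F\subseteq\supp(a)$. Since $\Gamma$ is closed under subsets, such an $F$ exists if and only if $\supp(a)\notin\Gamma$. Therefore $x^a\notin I_\Gamma$ exactly when $\supp(a)\in\Gamma$, which is the claimed description of the basis.

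The only place requiring care is the set-theoretic bookkeeping of Alexander duality in Step 1, that nonfaces of $\Delta^\vee$ are exactly complements of faces of $\Delta$. Everything else is routine.
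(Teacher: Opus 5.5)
Your proof is correct and is the standard argument behind \cite[Proposition 1.5.1]{herzog2011monomial}. The paper gives no proof of its own: it cites that result and uses the identification $J(G)=I_{\Delta(G)^\vee}$ from its preliminaries, which your Step 1 verifies. The justification sentence in Step 1 is muddled, though the claim is immediate: $F\in\Delta(G)^\vee$ iff $F=[n]\setminus F'$ for some $F'\notin\Delta(G)$, and complementation is an involution, so this holds iff $[n]\setminus F\notin\Delta(G)$.
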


\begin{obs}\label{obs:faces}
Let $F\subseteq [n]$. Then $F\in \Delta(G)^\vee$ if and only if $[n]\setminus F\notin \Delta(G)$.
Equivalently, $F\in \Delta(G)^\vee$ if and only if $[n]\setminus F$ is \emph{not} an independent set of $G$.
\end{obs}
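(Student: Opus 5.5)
This follows directly from the definition of the Alexander dual recalled in \Cref{sec:prelim}, namely $\Delta(G)^\vee=\{[n]\setminus F : F\notin\Delta(G)\}$, by unwinding the set-builder notation. No earlier result is needed.

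For the forward direction, suppose $F\in\Delta(G)^\vee$. By definition there is some $F'\notin\Delta(G)$ with $F=[n]\setminus F'$. Since complementation in $[n]$ is an involution, $F'=[n]\setminus F$, and therefore $[n]\setminus F\notin\Delta(G)$. For the converse, suppose $[n]\setminus F\notin\Delta(G)$. Set $F'=[n]\setminus F$; then $F'\notin\Delta(G)$ and $F=[n]\setminus F'$, so $F\in\Delta(G)^\vee$ by definition.

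The "equivalently" clause follows because the faces of $\Delta(G)$ are, by definition, exactly the independent sets of $G$. Thus $[n]\setminus F\notin\Delta(G)$ says precisely that $[n]\setminus F$ is not independent, that is, it contains an edge of $G$. No step presents a real obstacle. The only point to state explicitly is the bijectivity of complementation, which is what justifies identifying the witness $F'$ with $[n]\setminus F$.
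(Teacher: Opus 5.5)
Your proposal is correct and matches the paper's approach: the paper records this as an observation without proof, since it follows immediately from the definition $\Delta^\vee=\{[n]\setminus F : F\notin\Delta\}$ given in the preliminaries together with the fact that the faces of $\Delta(G)$ are exactly the independent sets of $G$. Your unwinding of the definition, using that complementation in $[n]$ is an involution, is exactly that argument made explicit.
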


\begin{lem}
Let $G$ be a finite simple graph on $n$ vertices. For all $d\ge 1$,
\[
HF_{R/J(G)}(d)
=
\binom{n+d-1}{n-1}
- n\binom{d-1}{n-2}
- \binom{d-1}{n-1}
-\sum_{j=2}^{n-1} g_j\binom{d-1}{n-j-1}.
\]
\end{lem}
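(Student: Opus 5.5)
We count standard monomials of degree $d$ using \Cref{prop:stdmon}: a basis of $(R/J(G))_d$ is given by the monomials $x^a$ of degree $d$ whose support $F=\supp(a)$ lies in $\Delta(G)^\vee$. By \Cref{obs:faces}, this means $[n]\setminus F$ is not an independent set of $G$. So $HF_{R/J(G)}(d)$ equals the number of all degree-$d$ monomials, namely $\binom{n+d-1}{n-1}$, minus the number of degree-$d$ monomials whose support $F$ has independent complement.

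We sort these bad monomials by $j=|[n]\setminus F|$. For $d\ge 1$ the support is nonempty, so $|F|=n-j\ge 1$ and $0\le j\le n-1$. Fix $F$ with $|F|=k\ge 1$. The monomials of degree $d$ with support exactly $F$ correspond to compositions of $d$ into $k$ positive parts, and there are $\binom{d-1}{k-1}$ of them. Taking $k=n-j$ gives $\binom{d-1}{n-j-1}$. The number of admissible $F$ with $|[n]\setminus F|=j$ is the number of independent sets of size $j$, which is $g_j$. Hence
\[
HF_{R/J(G)}(d)=\binom{n+d-1}{n-1}-\sum_{j=0}^{n-1} g_j\binom{d-1}{n-j-1}.
\]

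It remains to split off the first two terms. Since $g_0=1$, the $j=0$ term is $\binom{d-1}{n-1}$. Since $g_1=n$, the $j=1$ term is $n\binom{d-1}{n-2}$. The remaining terms are $\sum_{j=2}^{n-1}g_j\binom{d-1}{n-j-1}$, which is exactly the stated formula. We use the convention $g_j=0$ for $j>\alpha$, so the sum is really a sum over $j\le \alpha$. This also covers the case where $G$ has no edges, although then every proper subset is independent and the formula still holds formally.

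The only delicate points are bookkeeping. First, the hypothesis $d\ge 1$ is what excludes $F=\emptyset$: for $d=0$ the empty support would force the count to be adjusted. Second, the binomial $\binom{d-1}{k-1}$ must be read as $0$ when $d<k$, which matches the usual convention. No other obstacle is expected.
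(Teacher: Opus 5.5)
Your proposal is correct and is essentially the paper's argument: both use \Cref{prop:stdmon} and \Cref{obs:faces}, count monomials with a fixed support of size $k$ as $\binom{d-1}{k-1}$, and reduce to counting supports whose complement is independent via $g_j$. The only cosmetic difference is that you subtract the ``bad'' supports from the total $\binom{n+d-1}{n-1}$ directly, whereas the paper sums $f_{i-1}(\Delta(G)^\vee)=\binom{n}{i}-g_{n-i}$ over $i$ and then invokes the identity $\binom{n+d-1}{n-1}=\sum_{i=1}^{n}\binom{n}{i}\binom{d-1}{i-1}$, which is the same grouping.
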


\begin{proof}
By \Cref{prop:stdmon}, a degree-$d$ standard monomial is determined by its support
$F\in \Delta(G)^\vee$ together with a choice of positive exponents on the variables
indexed by $F$. If $|F|=i$, then the number of degree-$d$ monomials supported
exactly on $F$ is $\binom{d-1}{i-1}$ (the number of compositions of $d$ into $i$
positive parts). Therefore,
\begin{equation}\label{eq:HF1-rev}
HF_{R/J(G)}(d)
= \sum_{F\in \Delta(G)^\vee} \binom{d-1}{|F|-1}
= \sum_{i=1}^{n} f_{i-1}(\Delta(G)^\vee)\binom{d-1}{i-1},
\end{equation}
where $f_{i-1}(\Delta(G)^\vee)$ denotes the number of faces of $\Delta(G)^\vee$
of cardinality $i$.

To evaluate $f_{i-1}(\Delta(G)^\vee)$, use \Cref{obs:faces}: a subset
$F\subseteq [n]$ with $|F|=i$ lies in $\Delta(G)^\vee$ precisely when its
complement $[n]\setminus F$ (of size $n-i$) is not independent. Hence
\[
f_{i-1}(\Delta(G)^\vee) = \binom{n}{i}-g_{n-i},
\]
and \eqref{eq:HF1-rev} becomes
\begin{equation}\label{eq:HF2-rev}
HF_{R/J(G)}(d)
= \sum_{i=1}^{n}\binom{n}{i}\binom{d-1}{i-1} - \sum_{i=1}^{n} g_{n-i}\binom{d-1}{i-1}.
\end{equation}

We now use the binomial identity
\begin{equation}\label{eq:binom-id}
\binom{n+d-1}{n-1} = 
\sum_{i=1}^{n}\binom{n}{i}\binom{d-1}{i-1},
\end{equation}
which counts the number of weak compositions of $d$ into $n$ parts by grouping
according to the number of nonzero parts.

Finally, note that $g_{n-i}=0$ unless $n-i\le \alpha$, and that the terms
corresponding to $i=0$ or $i=n$ in \eqref{eq:HF2-rev} contribute
$g_n\binom{d-1}{-1}=0$ and $g_0\binom{d-1}{n-1}=\binom{d-1}{n-1}$, respectively.
Also the term $i=n-1$ contributes $g_1\binom{d-1}{n-2}=n\binom{d-1}{n-2}$.
Reindexing the remaining sum via $j=n-i$ yields
\[
\sum_{i=1}^{n} g_{n-i}\binom{d-1}{i-1} 
=
n\binom{d-1}{n-2}+\binom{d-1}{n-1}+\sum_{j=2}^{n-1} g_j\binom{d-1}{n-j-1}.
\]
Substituting this and \eqref{eq:binom-id} into \eqref{eq:HF2-rev} gives the stated formula.
\end{proof}

\begin{remark}\label{rem:splitAB}
For later use, write $HF_{R/J(G)}(d)=A_d-B_d$ for $d\ge 1$, where
\[
A_d:=\binom{n+d-1}{n-1}-n\binom{d-1}{n-2}-\binom{d-1}{n-1},
\qquad
B_d:=\sum_{j=2}^{n-1} g_j\binom{d-1}{n-j-1}.
\]
Then the Hilbert series can be written as
\[
H_{R/J(G)}(t)
=
1+\sum_{d\ge 1}HF_{R/J(G)}(d)t^d
=
\Bigl(1+\sum_{d\ge 1}A_dt^d\Bigr)-\sum_{d\ge 1}B_dt^d.
\]
\end{remark}

We now provide an expression for the $h$-polynomial of cover ideals.

\begin{thm}\label{lem:HS1}
Let $G$ be a finite simple graph on $n$ vertices with $\alpha=\alpha(G)$.
Then the $h$-polynomial of $R/J(G)$ is
\[
h_{R/J(G)}(t)
=
\sum_{k=0}^{n-\alpha-1} (k+1)t^k
+
\sum_{s=-1}^{\alpha-3} E_{s+3} t^{n-s-3},
\]
where, for $-1\le s\le \alpha-3$,
\[
E_{s+3}
=
(n-s-2) -
\sum_{j=s+3}^{\alpha} \binom{j-2}{s+1}(-1)^{j-s-1} g_j.
\]
\end{thm}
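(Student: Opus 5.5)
We start from the decomposition of \Cref{rem:splitAB} and multiply the Hilbert series by $(1-t)^{n-2}$, since $\dim R/J(G)=n-2$ when $G$ has an edge. If $G$ has no edges, then $\alpha=n$ and the formula should be checked directly: the first sum is empty and the second degenerates. The plan is to compute separately
\[
(1-t)^{n-2}\Bigl(1+\sum_{d\ge1}A_dt^d\Bigr)\qquad\text{and}\qquad (1-t)^{n-2}\sum_{d\ge1}B_dt^d ,
\]
and then add the two results.

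For the $B$-part we use the standard generating function, valid for $m\ge0$:
\[
\sum_{d\ge1}\binom{d-1}{m}t^d=\frac{t^{m+1}}{(1-t)^{m+1}}.
\]
With $m=n-j-1$ this gives
\[
(1-t)^{n-2}\sum_{d\ge 1}B_dt^d=\sum_{j=2}^{\alpha} g_j\,t^{n-j}(1-t)^{j-2}.
\]
Expanding $(1-t)^{j-2}$ binomially, the coefficient of $t^{n-s-3}$ is
\[
\sum_{j} g_j\binom{j-2}{s+1}(-1)^{j-s-1}.
\]
Here one substitutes $n-j+\ell=n-s-3$, that is $\ell=j-s-3$, and uses the symmetry $\binom{j-2}{j-s-3}=\binom{j-2}{s+1}$. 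Only $j\ge s+3$ contribute, so the exponents run from $t^{n-\alpha}$ up to $t^{n-2}$, which corresponds to $s=-1,\dots,\alpha-3$. This is exactly the $g$-sum appearing in $E_{s+3}$.

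For the $A$-part we treat the three terms one at a time. First,
\[
\sum_{d\ge0}\binom{n+d-1}{n-1}t^d=(1-t)^{-n},
\]
so after multiplying by $(1-t)^{n-2}$ it contributes $(1-t)^{-2}=\sum_{k\ge0}(k+1)t^k$. Second, the term $n\binom{d-1}{n-2}$ contributes $n\,t^{n-1}/(1-t)$. Third, the term $\binom{d-1}{n-1}$ contributes $t^n/(1-t)^2$. The sign of each is then fixed by the subtraction in $A_d$. The key simplification, and the step I expect to be the main bookkeeping obstacle, is to combine these as
\[
\frac{1-nt^{n-1}+(n-1)t^n}{(1-t)^2}=\sum_{k=0}^{n-2}(k+1)t^k .
\]
This is the truncation of $(1-t)^{-2}$, and one checks it by multiplying out. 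The key point is that the $A$-contribution is a genuine polynomial, so the infinite series collapse.

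Finally, we split $\sum_{k=0}^{n-2}(k+1)t^k$ into two ranges. The range $k\le n-\alpha-1$ gives the first sum of the theorem. The range $n-\alpha\le k\le n-2$ is reindexed via $k=n-s-3$, which gives $k+1=n-s-2$ for $-1\le s\le\alpha-3$. This supplies the term $(n-s-2)$ in $E_{s+3}$. Subtracting the $B$-contribution from the previous paragraph then yields exactly the stated formula. The only care needed is with the sign convention for the $B$-part, which enters $E_{s+3}$ with a minus sign, and with the edge cases $\alpha=n-1$ and $\alpha=1$. In those cases one should verify that the index ranges remain consistent, for instance that empty sums are interpreted as zero.
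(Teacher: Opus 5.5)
Your proposal is correct and follows essentially the same route as the paper's proof: you start from \Cref{rem:splitAB}, sum the generating functions for the $A$- and $B$-parts, expand $(1-t)^{j-2}$ binomially in the $B$-part, and reindex with $k=n-s-3$; the only difference is cosmetic, since you collapse the $A$-part via the truncated identity $\bigl(1-nt^{n-1}+(n-1)t^n\bigr)/(1-t)^2=\sum_{k=0}^{n-2}(k+1)t^k$, whereas the paper rewrites $1+t+\cdots+t^{n-1}-nt^{n-1}$ as $\sum_{i=0}^{n-1}(t^i-t^{n-1})$. The edgeless case needs no separate check, because there $J(G)=R$ and the statement is degenerate, so like the paper you may simply assume $G$ has an edge, which gives $\dim R/J(G)=n-2$.
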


\begin{proof}
We simplify the two pieces appearing in \Cref{rem:splitAB}.

Using the standard generating functions
\[
\sum_{d\ge 0}\binom{n+d-1}{n-1}t^d=\frac{1}{(1-t)^n},
\qquad
\sum_{d\ge 0}\binom{d}{r}t^d=\frac{t^r}{(1-t)^{r+1}},
\]
one obtains
\begin{align*}
1+\sum_{d\ge 1}A_dt^d
&=
\sum_{d\ge 0}\binom{n+d-1}{n-1}t^d
- n\sum_{d\ge 1}\binom{d-1}{n-2}t^d
- \sum_{d\ge 1}\binom{d-1}{n-1}t^d \\
&=
\frac{1}{(1-t)^n}
- n\cdot \frac{t^{n-1}}{(1-t)^{n-1}}
- \frac{t^{n}}{(1-t)^{n}} \\
&=
\frac{1-t^n}{(1-t)^n}-\frac{nt^{n-1}}{(1-t)^{n-1}}\\
&=
\frac{1+t+\cdots+t^{n-1}-nt^{n-1}}{(1-t)^{n-1}}\\
&=
\frac{1}{(1-t)^{n-1}}\sum_{i=0}^{n-1}(t^i-t^{n-1})\\
&= 
\frac{1+2t+3t^2+\cdots+(n-1)t^{n-2}}{(1-t)^{n-2}}.
\end{align*}

Since $g_j=0$ for $j>\alpha$, we may sum only up to $\alpha$:
\begin{align*}
\sum_{d\ge 1}B_dt^d
&=
\sum_{d\ge 1}\Bigg(\sum_{j=2}^{\alpha} g_j\binom{d-1}{n-j-1}\Bigg)t^d
=
\sum_{j=2}^{\alpha} g_j \sum_{d\ge 1}\binom{d-1}{n-j-1}t^d \\
&=
\sum_{j=2}^{\alpha} g_j \cdot \frac{t^{n-j}}{(1-t)^{n-j}}.
\end{align*}
Factoring out $(1-t)^{-(n-2)}$, expanding $(1-t)^{j-2}$, and switching the order of summation gives
\begin{align*}
\sum_{d\ge 1}B_dt^d
&=
\frac{1}{(1-t)^{n-2}}
\sum_{j=2}^{\alpha} g_j t^{n-j}(1-t)^{j-2} \\
&=
\frac{1}{(1-t)^{n-2}}
\sum_{j=2}^{\alpha} g_j t^{n-j}\sum_{m=0}^{j-2}\binom{j-2}{m}(-1)^m t^m \\
&=
\frac{1}{(1-t)^{n-2}}
\sum_{k=n-\alpha}^{n-2}\Bigg(\sum_{j=n-k}^{\alpha}\binom{j-2}{k-n+j}(-1)^{k-n+j}g_j\Bigg)t^k.
\end{align*}

Combining contributions from $A_d$ and $B_d$  in \Cref{rem:splitAB} yields
\begin{align*}
H_{R/J(G)}(t)
&=
\frac{1}{(1-t)^{n-2}}
\Biggl(
\sum_{k=0}^{n-2}(k+1)t^k
-
\sum_{k=n-\alpha}^{n-2}\Bigg(\sum_{j=n-k}^{\alpha}\binom{j-2}{k-n+j}(-1)^{k-n+j}g_j\Bigg)t^k
\Biggr)\\
&=
\frac{1}{(1-t)^{n-2}}
\Biggl(
\sum_{k=0}^{n-\alpha-1}(k+1)t^k
+
\sum_{k=n-\alpha}^{n-2}\Bigg((k+1)-\sum_{j=n-k}^{\alpha}\binom{j-2}{k-n+j}(-1)^{k-n+j}g_j\Bigg)t^k
\Biggr).
\end{align*}
Now set $k=n-s-3$. Then $k$ ranges from $n-\alpha$ to $n-2$ exactly when
$s$ ranges from $\alpha-3$ down to $-1$, and the coefficient of $t^{n-s-3}$ becomes
\[
(n-s-2)-\sum_{j=s+3}^{\alpha}\binom{j-2}{s+1}(-1)^{j-s-1}g_j
=:E_{s+3}.
\]
This gives
\[
H_{R/J(G)}(t)
=
\frac{1}{(1-t)^{n-2}}
\Biggl(
\sum_{k=0}^{n-\alpha-1}(k+1)t^k
+
\sum_{s=-1}^{\alpha-3}E_{s+3}t^{n-s-3}
\Biggr).
\]
Finally, the first sum has degree at most $n-\alpha-1$, while the second sum
has all terms of degree at least $n-\alpha$, so no cancellation occurs in the
numerator. Hence the displayed numerator is the $h$-polynomial of $R/J(G)$.
\end{proof}

\begin{remark}
    As in \Cref{rem:gen_h_poly}, one can generalize the expression of the $h$-polynomial of cover ideals from \Cref{lem:HS1} to Alexander dual of any squarefree monomial ideal (using Notation~\ref{not:3.1}) as follows: 
\begin{align*}
h_{R/I(\mathcal{H})^{\vee}}(t)
&=
\sum_{k=0}^{n-\alpha-1}\binom{k+\delta-1}{\delta-1}t^k +
\sum_{k=n-\alpha}^{n-\delta}\left(
\binom{k+\delta-1}{\delta-1}
-
\sum_{j=n-k}^{\alpha} \binom{j-\delta}{k-n+j}(-1)^{k-n+j} g_j 
\right)t^k 
\end{align*}
where $\dim R/I(\mathcal{H})^{\vee} = n-\delta$.
\end{remark}

 As in the case of edge ideals, the invariant  $g(G)$ governs when $\deg h_{R/J(G)}(t)$ achieves its maximal possible value $n-2$.

\begin{cor}\label{cor:degree}
Let $G$ be a finite simple graph on $n$ vertices with independence number $\alpha$.
\begin{enumerate}
\item[(a)] $n-\alpha-1\le \deg h_{R/J(G)}(t)\le n-2$.
\item[(b)] $\deg h_{R/J(G)}(t)=n-2$ if and only if $g(G)\neq 1$.
\item[(c)] If $g(G)=1$, then $E_2=0$ and:
\begin{enumerate}
\item[(i)] if $E_3=\cdots=E_{\alpha}=0$, then $\deg h_{R/J(G)}(t)=n-\alpha-1$;
\item[(ii)] otherwise,
\[
\deg h_{R/J(G)}(t)=n-d-3,
\qquad
d=\min\{s: E_{s+3}\neq 0\},
\]
where $0\le s\le \alpha-3$.
\end{enumerate}
\end{enumerate}
\end{cor}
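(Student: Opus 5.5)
The plan is to read everything off the explicit numerator in \Cref{lem:HS1}. That theorem writes
\[
h_{R/J(G)}(t)=\sum_{k=0}^{n-\alpha-1}(k+1)t^k+\sum_{s=-1}^{\alpha-3}E_{s+3}t^{n-s-3}.
\]
The first sum occupies exactly the degrees $0,\dots,n-\alpha-1$. The second sum occupies exactly the degrees $n-\alpha,\dots,n-2$, with $s=-1$ giving $t^{n-2}$ and $s=\alpha-3$ giving $t^{n-\alpha}$. So the coefficient of each power of $t$ is a single explicit number, and the degree is the largest exponent whose coefficient is nonzero. Throughout I assume $G$ has at least one edge, as in the setup where $\dim R/J(G)=n-2$. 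Then $\alpha\le n-1$, so $n-\alpha-1\ge 0$ and the first sum is nonempty.

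For (a), the coefficient of $t^{n-\alpha-1}$ is $n-\alpha\ge 1$. This coefficient is nonzero, so $\deg h_{R/J(G)}(t)\ge n-\alpha-1$. No term of the numerator has degree above $n-2$, so $\deg h_{R/J(G)}(t)\le n-2$.

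For (b), I would compute the top coefficient $E_2$ by setting $s=-1$. Since $\binom{j-2}{0}=1$,
\[
E_2=(n-1)-\sum_{j=2}^{\alpha}(-1)^{j}g_j .
\]
Using $g_1=n$, we have
\[
g(G)=\sum_{i\ge1}(-1)^{i-1}g_i=n-\sum_{j\ge2}(-1)^jg_j,
\]
so $E_2=g(G)-1$; equivalently $E_2=-P_G(-1)$ by \Cref{rem:eulerchar-indep}. Hence the degree equals $n-2$ exactly when $g(G)\neq 1$.

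For (c), assume $g(G)=1$, so $E_2=0$ and the $t^{n-2}$ term vanishes. The remaining candidate top terms are $E_3t^{n-3},\dots,E_\alpha t^{n-\alpha}$, listed in decreasing degree as $s$ runs from $0$ to $\alpha-3$. If all of these vanish, the highest surviving term is $(n-\alpha)t^{n-\alpha-1}$, which gives (i). Otherwise, the first index $d=\min\{s:E_{s+3}\neq0\}$ with $0\le d\le\alpha-3$ yields the top term $t^{n-d-3}$, which gives (ii).

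The only mildly delicate point is the identification $E_2=g(G)-1$ together with the degenerate ranges. When $\alpha\le 2$, the family $E_3,\dots,E_\alpha$ is empty, so (c)(i) holds vacuously. When $\alpha=1$ (a complete graph), the second sum has no terms beyond $E_2$, and I would check these cases consistently. I do not expect any real obstacle here.
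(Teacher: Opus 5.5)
Your proposal is correct and follows the paper's proof: read the degree off the explicit numerator in \Cref{lem:HS1}, identify the top coefficient as $E_2=g(G)-1$, and scan $E_3,E_4,\dots$ downward. Your observation that the $t^{n-\alpha-1}$ coefficient is $n-\alpha\ge 1$ makes part (a) more explicit than the paper does. One small correction: when $\alpha=1$ the second sum is empty (it does not merely have ``no terms beyond $E_2$''), so the $t^{n-2}$ coefficient is $n-1$ from the first sum; this coincides with $E_2=g(G)-1$, so (b) still holds.
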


\begin{proof}
(a) This is immediate from \Cref{lem:HS1}.

(b) By \Cref{lem:HS1}, we have $\deg h_{R/J(G)}(t)=n-2$ if and only if the
coefficient of $t^{n-2}$ is nonzero, i.e., $E_2\neq 0$.
Plugging $s=-1$ into the formula for $E_{s+3}$ gives
\[
E_2
=
n-1-\sum_{j=2}^{\alpha}(-1)^j g_j.
\]
Since $g_1=n$, this can be rewritten as
\[
E_2
=
-1+\sum_{j=1}^{\alpha}(-1)^{j-1}g_j
=
-1+g(G).
\]
Hence $E_2\neq 0$ if and only if $g(G)\neq 1$.

(c) If $g(G)=1$, then $E_2=0$ by (b), so $\deg h_{R/J(G)}(t)<n-2$. The remaining
statements follow by inspecting the explicit form of $h_{R/J(G)}(t)$ in
\Cref{lem:HS1}.
\end{proof}

Part (c)(ii) is analogous to the corresponding statement for edge ideals in
\cite[Corollary 3.4]{biermann2024degree}.

We conclude this section by relating the degrees of the $h$-polynomials of edge and cover ideals.

\begin{thm}\label{thm:deg_edge_cover}
Let $G$ be a finite simple graph.
Then there exists an integer $d$ with $0\le d\le \alpha-1$ such that
\[
\deg h_{R/J(G)}(t)=n-2-d
\quad\Longleftrightarrow\quad
\deg h_{R/I(G)}(t)=\alpha-d.
\]   
\end{thm}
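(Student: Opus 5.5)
The plan is to show that the two $\mathfrak{a}$-invariants coincide, $\mathfrak{a}(R/J(G))=\mathfrak{a}(R/I(G))$. The statement then follows by subtracting dimensions. We have $\dim R/I(G)=\alpha$, since $\Delta(G)$ has dimension $\alpha-1$, and $\dim R/J(G)=n-2$. So $\deg h_{R/J(G)}(t)-(n-2)=\deg h_{R/I(G)}(t)-\alpha$. Writing $-d$ for this common value gives exactly the equivalence claimed.

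\textbf{Direct route via the independence polynomial.} By \eqref{eq:h-from-independent-poly}, $H_{R/I(G)}(t)=P_G\!\left(\tfrac{t}{1-t}\right)$. For $R/J(G)$, the proof of \Cref{lem:HS1} gives
\[
H_{R/J(G)}(t)=\frac{1-t^n}{(1-t)^n}-\frac{nt^{n-1}}{(1-t)^{n-1}}-\sum_{j\ge 2}g_j\frac{t^{n-j}}{(1-t)^{n-j}}.
\]
Set $u=\tfrac{t}{1-t}$, so that $\tfrac{1}{1-t}=1+u$. The expression becomes
\[
H_{R/J(G)}(t)=(1+u)^n-u^n-\sum_{j\ge 0}g_j u^{n-j}=(1+u)^n-u^n-u^nP_G(1/u).
\]
In the variable $u$, the order of the pole at $t=1$ becomes a degree in $u$. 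Precisely, for a rational function $H=h(t)/(1-t)^D$ we have $\deg h-D=-\operatorname{ord}_{t=1}(h)$ adjusted by the pole. A cleaner way to say this is that $\mathfrak{a}(A)$ equals the degree of $H_A(t)$ as a rational function, i.e. $\deg h-D$. Under $t=u/(1+u)$, the rational function $H(t)$ has degree $\deg h - D$ measured at $t=\infty$, which corresponds to $u=-1$.

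This gives the key reformulation. Since $t\to\infty$ corresponds to $u\to -1$, we get $\mathfrak{a}(A)=-\operatorname{ord}_{u=-1}$ of $H_A$ viewed as a function of $u$. Indeed $1-t=1/(1+u)$ and $t=u/(1+u)$, so
\[
h(t)(1-t)^{-D}=(1+u)^{D-\deg h}\cdot\bigl(\text{something nonzero at }u=-1\bigr),
\]
because $h(u/(1+u))(1+u)^{\deg h}$ has value at $u=-1$ equal to $(-1)^{\deg h}$ times the leading coefficient of $h$. Hence $\mathfrak{a}(A)=-\operatorname{ord}_{u=-1}H_A$.

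\textbf{Comparing the two orders at $u=-1$.} For $R/I(G)$ this order is $\operatorname{ord}_{u=-1}P_G(u)$. For $R/J(G)$ it is
\[
\operatorname{ord}_{u=-1}\Bigl((1+u)^n-u^n-u^nP_G(1/u)\Bigr).
\]
The term $(1+u)^n$ vanishes to order $n$ there. The remaining piece is $-u^n\bigl(1+P_G(1/u)\bigr)$. This is the step I expect to be the main obstacle: the extra constant $1$ (coming from $g_0$ being excluded in the original $B_d$) must be tracked carefully, and I anticipate a sign or indexing correction.

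Recheck the computation: the sum over $j\ge 2$ together with the $nt^{n-1}$ term covers $j=1$, and the term $-t^n/(1-t)^n$ is the $j=0$ term, so $1/(1-t)^n$ stands alone. The expression is therefore exactly $(1+u)^n-u^nP_G(1/u)$. Now $u\mapsto 1/u$ fixes $-1$ and the factor $u^n$ is a unit near $u=-1$, so $\operatorname{ord}_{u=-1}u^nP_G(1/u)=\operatorname{ord}_{u=-1}P_G(u)=M$. Since $M\le\alpha<n$, adding $(1+u)^n$ does not change this order. Hence both $\mathfrak{a}$-invariants equal $-M$, and the theorem follows. Finally, $d=M\le \alpha-1$ because $P_G$ has degree $\alpha$ with positive leading coefficient and $P_G\ne c(1+u)^\alpha$ unless $G$ is edgeless, which is excluded.
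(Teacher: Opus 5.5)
Your proof is correct. It shares the paper's reduction to the identity $\mathfrak{a}(R/I(G))=\mathfrak{a}(R/J(G))$, but it proves that identity by a different route.

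\textbf{What differs.} The paper obtains the equality of $\mathfrak{a}$-invariants by citing Miller's theorem that $\mathfrak{a}$-invariants are preserved under Alexander duality, and it never identifies the common value. You prove the equality by hand. With $u=t/(1-t)$ you write $H_{R/I(G)}=P_G(u)$ and $H_{R/J(G)}=(1+u)^n-u^nP_G(1/u)$. You then show $\mathfrak{a}(A)=-\operatorname{ord}_{u=-1}H_A$, using that $h(u/(1+u))(1+u)^{\deg h}$ takes the nonzero value $(-1)^{\deg h}$ times the leading coefficient at $u=-1$. Finally, $u\mapsto 1/u$ fixes $-1$, while $(1+u)^n$ vanishes there to order $n>\alpha\ge M(G)$.

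\textbf{What your route buys.}
\begin{itemize}
\item It is self-contained. It uses only the Hilbert function lemma and \eqref{eq:h-from-independent-poly}, so there is no circularity with \Cref{lem:deg-via-ord}, whose $R/J(G)$ part the paper derives from this very theorem.
\item It identifies $d=M(G)$, so it yields both formulas of \Cref{lem:deg-via-ord} at once.
\item It justifies the range $0\le d\le\alpha-1$ via the argument of \Cref{lem:M_bound}; the paper's proof leaves this range implicit.
\item It generalizes verbatim. Replace $P_G$ by the face polynomial $f_\Delta(u)=\sum_{F\in\Delta}u^{|F|}$. Then $H_{\Bbbk[\Delta^\vee]}=(1+u)^n-u^nf_\Delta(1/u)$, and $\operatorname{ord}_{u=-1}f_\Delta<n$ whenever $\Delta$ is not the full simplex. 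This gives an elementary proof of $\mathfrak{a}$-invariant duality for any nonzero proper squarefree monomial ideal, which is exactly the input the paper imports from Miller.
\end{itemize}

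\textbf{What the paper's route buys.} It is a one-line appeal to a general structural theorem, and it keeps the duality statement separate from the computation of $M(G)$.

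\textbf{Clean-up for the write-up.}
\begin{itemize}
\item Delete the false start $(1+u)^n-u^n-u^nP_G(1/u)$, and state the corrected formula directly.
\item Delete the sentence about ``$-\operatorname{ord}_{t=1}(h)$ adjusted by the pole''. It is meaningless, since $h(1)\neq 0$.
\item State explicitly that $G$ has at least one edge. This is what gives $\dim R/J(G)=n-2$ and $\alpha<n$.
\end{itemize}
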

\begin{proof}
    It follows from \cite[Theorem 4.36]{miller_thesis} that 
    \[
    \mathfrak{a}(R/I(G))=\mathfrak{a}(R/J(G)).
    \]
Since $\dim(R/I(G))=\alpha$ and $\dim(R/J(G))=n-2$, we have
\[
\deg h_{R/I(G)}(t)-\alpha=\deg h_{R/J(G)}(t)-(n-2).
\]
 Thus, the statement holds.
\end{proof}

\section{Degree of the $h$-polynomial via the independence polynomial and $M(G)$}
\label{sec:deg-h-roots-independence}

A guiding theme of this section and the next is that the degrees of the $h$-polynomials of $R/I(G)$ and
$R/J(G)$ can be studied through the independence polynomial $P_G(x)$.  We first express the
$h$-polynomial of $R/I(G)$ in terms of $P_G(x)$, and then derive degree formulas for both edge and
cover ideals in terms of the invariant $M(G)$, the multiplicity of $x=-1$ as a root of $P_G(x)$.

We also clarify the relationship between the present approach and \cite{biermann2024degree}.  In
\cite{biermann2024degree}, the degree $\deg h_{R/I(G)}(t)$ is governed by the invariant $g(G)$ and a
collection of alternating sums $D_s$ (see \Cref{thm:h_poly_edge}).  We show that these same quantities
are controlled by the behavior of $P_G(x)$ at $x=-1$: the invariant $g(G)$ detects whether $x=-1$ is a
root of $P_G(x)$, while the first nonzero $D_s$ determines the multiplicity of that root.  Moreover, we
interpret each $D_s$ as the value of an appropriate derivative of $P_G(x)$ at $x=-1$.

To make this precise, we adopt the following notation for root multiplicities.

\begin{notation}\label{not:ord}
For a polynomial $f(x)$ and a scalar $a$, we write $\ord_{x=a} f(x)$ for the multiplicity of $x=a$
as a root of $f(x)$.
\end{notation}

With this notation in place, we record the multiplicity of the distinguished root $x=-1$ as a graph invariant.

\begin{defn}\label{def:M}
Let $G$ be a graph with independence polynomial $P_G(x)$. Define
\[
M(G) := \ord_{x=-1} P_G(x),
\]
the multiplicity of $-1$ as a root of $P_G(x)$.
\end{defn}

Since $M(G)$ is the multiplicity of a root of $P_G(x)$, we have
$M(G)\le \deg P_G(x)=\alpha$. If $G$ has at least one edge, this bound can be improved.

\begin{lem}\label{lem:M_bound}
If $G$ has at least one edge, then
\[
M(G)\ \le\ \alpha-1.
\]
\end{lem}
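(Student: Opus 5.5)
Since $M(G)\le\alpha$ always, it suffices to rule out $M(G)=\alpha$. If $M(G)=\alpha=\deg P_G(x)$, then because the leading coefficient is $g_\alpha$, we would have
\[
P_G(x)=g_\alpha(x+1)^{\alpha}.
\]
Comparing constant terms gives $g_0=1=g_\alpha$. Comparing coefficients of $x$ then gives $g_1=n=\alpha\, g_\alpha=\alpha$. So the plan is to derive a contradiction from $n=\alpha$ when $G$ has an edge.

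The contradiction comes from the definition of $\alpha(G)$. If $\alpha=n$, then the whole vertex set $[n]$ is independent, so $G$ has no edges. This contradicts the hypothesis, hence $M(G)\ne\alpha$ and therefore $M(G)\le\alpha-1$.

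A possible subtlety is the degenerate case $\alpha=0$, but this cannot occur since $n\ge 1$ gives $\alpha\ge1$. The case $\alpha=1$ (a complete graph) is covered by the same argument: $P_G(x)=1+nx$ with $n\ge2$, so $-1$ is not a root and $M(G)=0=\alpha-1$. There is no real obstacle here. The only step needing care is noting that the leading coefficient $g_\alpha$ is forced to be $1$ by the constant term, so that the comparison of linear coefficients yields exactly $n=\alpha$.
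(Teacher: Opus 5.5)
Your proof is correct and matches the paper's argument: a degree-$\alpha$ polynomial divisible by $(1+x)^{\alpha}$ must be a constant multiple of $(1+x)^{\alpha}$; the constant term forces that multiple to be $1$, and comparing linear coefficients gives $n=\alpha$, so $G$ has no edges. Your separate remarks on the cases $\alpha=0$ and $\alpha=1$ are harmless but unnecessary, since the general argument already covers them.
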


\begin{proof}
Assume $G$ has at least one edge. If $M(G)\ge \alpha$, then $(1+x)^{\alpha}$ divides $P_G(x)$.
Since $\deg P_G(x)=\alpha$, we get $P_G(x)=c(1+x)^{\alpha}$ for some constant $c\in\Bbbk$.
Because $P_G(0)=1$, we must have $c=1$, so $P_G(x)=(1+x)^{\alpha}$.
Comparing the coefficients of $x$ gives $n=\alpha$, which forces $G$ to have no edges, a contradiction.
\end{proof}

We now state the main result of this section, which relates $M(G)$ to the degrees of the
$h$-polynomials of $R/I(G)$ and $R/J(G)$.

\begin{thm}\label{lem:deg-via-ord}
We have
\begin{align*}
 \deg h_{R/I(G)}(t)&=\alpha-M(G),\\
 \deg h_{R/J(G)}(t)&=n-2-M(G).   
\end{align*}

In particular, $\deg h_{R/I(G)}(t)=\alpha$ if and only if $P_G(-1)\neq 0$.
\end{thm}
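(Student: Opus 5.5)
The plan is to derive the edge-ideal formula directly from \Cref{eq:h-from-independent-poly} and then transfer it to the cover ideal via \Cref{thm:deg_edge_cover}. Write $P_G(x)=(1+x)^{M}Q(x)$ with $M=M(G)$ and $Q(-1)\neq 0$, where $\deg Q=\alpha-M$. Substituting $x=t/(1-t)$ gives $1+x=1/(1-t)$. Hence
\[
h_{R/I(G)}(t)=(1-t)^{\alpha}\,(1-t)^{-M}\,Q\!\left(\frac{t}{1-t}\right)=(1-t)^{\alpha-M}\,Q\!\left(\frac{t}{1-t}\right).
\]

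Now write $Q(x)=\sum_{i=0}^{m}q_ix^i$ with $m=\alpha-M$. Then
\[
(1-t)^{m}Q\!\left(\frac{t}{1-t}\right)=\sum_{i=0}^{m}q_i t^i(1-t)^{m-i}.
\]
This is a polynomial of degree at most $m$. Its coefficient of $t^m$ is
\[
\sum_i q_i(-1)^{m-i}=(-1)^mQ(-1),
\]
which is nonzero by the choice of $M$. Therefore $\deg h_{R/I(G)}(t)=\alpha-M(G)$. The special case follows immediately: $\deg h_{R/I(G)}(t)=\alpha$ exactly when $M(G)=0$, i.e.\ when $P_G(-1)\neq 0$. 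This computation is elementary; the only care needed is in the leading-coefficient identity, which one can also see by evaluating the homogenized form $\sum_i q_i t^i s^{m-i}$ at $s=-t$.

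For the cover ideal, the plan is to use the equality of $\mathfrak a$-invariants established in the proof of \Cref{thm:deg_edge_cover}:
\[
\deg h_{R/J(G)}(t)-(n-2)=\deg h_{R/I(G)}(t)-\alpha=-M(G),
\]
which gives $\deg h_{R/J(G)}(t)=n-2-M(G)$. This uses $\dim R/J(G)=n-2$, which requires $G$ to have an edge. The edgeless case needs a separate remark: then $J(G)=(0)$ (or the unit ideal, depending on convention). I would state the theorem under the standing assumption that $G$ has at least one edge, as in \Cref{lem:M_bound}. That lemma also guarantees $d=M(G)\le\alpha-1$, consistent with the range in \Cref{thm:deg_edge_cover}.

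The main obstacle is really only bookkeeping: making sure the $\mathfrak a$-invariant transfer applies with the correct dimensions, and handling degenerate graphs. As a cross-check independent of \cite{miller_thesis}, one could instead expand \Cref{lem:HS1} and show that the coefficients $E_{s+3}$ are, up to sign, the values $\frac{1}{k!}P_G^{(k)}(-1)$ (the Taylor coefficients of $P_G$ at $-1$), with the constant $(n-s-2)$ absorbing the $g_0,g_1$ terms. I would only pursue this if the duality route raised concerns.
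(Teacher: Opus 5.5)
Your proof is correct and is essentially the paper's argument. The paper substitutes $u=1/(1-t)$ and reads off $Q(-1)(1-t)^{\alpha-M}$ as the top term in powers of $(1-t)$, which is the same as your direct computation of the $t^{\alpha-M}$ coefficient $(-1)^{\alpha-M}Q(-1)$. It then passes to $R/J(G)$ via the equality of $\mathfrak{a}$-invariants, exactly as you do, and your caveat that this step needs $G$ to have at least one edge (so that $\dim R/J(G)=n-2$) is a fair point the paper leaves implicit.
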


\begin{proof}
Setting $u=\frac{1}{1-t}$ in \eqref{eq:h-from-independent-poly} yields 
\[
h_{R/I(G)}(t)=(1-t)^{\alpha}P_G(u-1)=u^{-\alpha}P_G(u-1).
\]
Let $M=M(G)$. So we may write
\[
P_G(x)=(x+1)^M Q(x)
\]
with $Q(-1)\neq 0$. Substituting $x=u-1$ gives
\[
h_{R/I(G)}(t)=u^{M-\alpha}Q(u-1).
\]
Note that $Q(u-1)$ is a polynomial in $u$ of degree $\deg Q=\alpha-M$ which has nonzero constant term
$Q(-1)$. Furthermore, the term $Q(-1)u^{M-\alpha}=Q(-1)(1-t)^{\alpha-M}$ is the largest degree term of $h_{R/I(G)}(t)$ as a polynomial in $(1-t)$, and therefore also as a polynomial in $t$. Hence $\deg h_{R/I(G)}(t)=\alpha-M$.

Lastly, the expression for the degree of the $h$-polynomial of $R/J(G)$ follows from  \Cref{thm:deg_edge_cover}.
\end{proof}

Using \Cref{lem:deg-via-ord}, we can translate the degree formulas for the \(h\)-polynomials into a relationship between \(M(G)\) and the \(\mathfrak{a}\)-invariants of \(R/I(G)\) and \(R/J(G)\).

\begin{remark}
    Since $\mathfrak{a}$-invariants of $R/I(G)$ and $R/J(G)$ coincide, it follows from \Cref{lem:deg-via-ord} that 
    \[
    \mathfrak{a}(R/J(G))=\mathfrak{a}(R/I(G))= -M(G).
    \]
\end{remark}

Recall from  \Cref{thm:h_poly_edge} that  $\deg h_{R/I(G)}(t)$ can be read off from the first nonzero $D_s$,  for $s\ge 0$, where
\begin{equation}\label{eq:Ds-def}
D_s=\sum_{j=s+1}^{\alpha}(-1)^{j-1-s}\binom{j}{s+1}g_j.
\end{equation}
On the other hand, \Cref{lem:deg-via-ord} shows that the same
degree is determined by the invariant $M(G)$. The next lemma makes the link between these two
descriptions explicit by expressing each $D_s$ in terms of the derivatives of the independence polynomial
evaluated at $x=-1$, thereby connecting the $M(G)$ approach directly to \cite{biermann2024degree}.

\begin{lem}\label{lem:Ds-derivative}
For $s\ge 0$, we have
\[
D_s=\frac{1}{(s+1)!}P_G^{(s+1)}(-1).
\]
\end{lem}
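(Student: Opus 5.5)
The plan is to differentiate $P_G(x)=\sum_{j=0}^{\alpha} g_j x^j$ term by term and compare the result with \eqref{eq:Ds-def}. For $s\ge 0$, the $(s+1)$-st derivative of a monomial is
\[
\frac{d^{s+1}}{dx^{s+1}}x^j=\frac{j!}{(j-s-1)!}\,x^{j-s-1}=(s+1)!\binom{j}{s+1}x^{j-s-1}
\]
for $j\ge s+1$, and it is zero for $j\le s$. This gives
\[
\frac{1}{(s+1)!}P_G^{(s+1)}(x)=\sum_{j=s+1}^{\alpha}\binom{j}{s+1}g_j\,x^{j-s-1}.
\]

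Next we evaluate at $x=-1$. Each term becomes $\binom{j}{s+1}g_j(-1)^{j-s-1}$. Since $(-1)^{j-s-1}=(-1)^{j-1-s}$, this is exactly the summand of $D_s$ in \eqref{eq:Ds-def}, and the summation ranges $s+1\le j\le\alpha$ agree. Hence $D_s=\frac{1}{(s+1)!}P_G^{(s+1)}(-1)$.

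There is no real obstacle here. The only points needing care are the indexing conventions: the derivative kills the terms with $j\le s$, which matches the lower limit $j=s+1$ in $D_s$, and the factor $j!/(j-s-1)!$ must be rewritten as $(s+1)!\binom{j}{s+1}$. We might also note, as a consequence, that the Taylor expansion of $P_G$ at $-1$ reads $P_G(x)=P_G(-1)+\sum_{s\ge0}D_s(x+1)^{s+1}$. This explains why the first nonzero $D_s$ (when $P_G(-1)=0$) detects $M(G)=d+1$, consistent with \Cref{thm:h_poly_edge} and \Cref{lem:deg-via-ord}.
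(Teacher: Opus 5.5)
Your proposal is correct and matches the paper's proof essentially verbatim: differentiate $P_G$ term by term, rewrite $j!/(j-s-1)!$ as $(s+1)!\binom{j}{s+1}$, and evaluate at $x=-1$. Your closing Taylor-expansion remark is also correct. It is exactly what the paper records next, in \Cref{cor:M-via-Ds} and \Cref{prop:h-derivative-expansion}.
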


\begin{proof}
Differentiate the independence polynomial $(s+1)$ times:
\[
P_G^{(s+1)}(x)=\sum_{j=s+1}^{\alpha} g_jj(j-1)\cdots (j-s)x^{j-(s+1)}
=(s+1)!\sum_{j=s+1}^{\alpha} g_j\binom{j}{s+1}x^{j-(s+1)}.
\]
Evaluating at $x=-1$ yields
\[
\frac{1}{(s+1)!}P_G^{(s+1)}(-1)
=\sum_{j=s+1}^{\alpha}(-1)^{j-1-s}\binom{j}{s+1}g_j
=D_s.
\]
\end{proof}

Thus the sequence $\{D_s\}_{s\ge 0}$ encodes the order of vanishing of $P_G(x)$ at $x=-1$, and hence the invariant $M(G)$.

\begin{cor}\label{cor:M-via-Ds}
Let $M=M(G)$. Then
\[
P_G(-1)=P_G'(-1)=\cdots=P_G^{(M-1)}(-1)=0,\quad P_G^{(M)}(-1)\neq 0,
\]
equivalently,
\[
D_0=D_1=\cdots=D_{M-2}=0,\quad D_{M-1}\neq 0.
\]
In particular, the first nonzero $D_s$ records the order of vanishing of $P_G$ at $-1$.
\end{cor}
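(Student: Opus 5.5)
This follows directly from the definition of $M=M(G)$ together with \Cref{lem:Ds-derivative}. For a polynomial $f$ over a field of characteristic zero, the multiplicity of $-1$ as a root is the least $k$ with $f^{(k)}(-1)\neq 0$. I would justify this from the factorization used in the proof of \Cref{lem:deg-via-ord}. Write $P_G(x)=(x+1)^M Q(x)$ with $Q(-1)\neq 0$ and expand with the Leibniz rule.

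For $k<M$, every term of $\frac{d^k}{dx^k}\bigl[(x+1)^MQ(x)\bigr]$ still contains a positive power of $(x+1)$, so $P_G^{(k)}(-1)=0$. For $k=M$, the only surviving term at $x=-1$ is $M!\,Q(-1)$, which is nonzero. Equivalently, the Taylor expansion $P_G(x)=\sum_{k\ge 0}\frac{P_G^{(k)}(-1)}{k!}(x+1)^k$ shows that $\frac{P_G^{(k)}(-1)}{k!}$ is the coefficient of $(x+1)^k$. Hence the order of vanishing is the index of the first nonzero Taylor coefficient. This gives the first displayed chain of equalities.

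Next I translate to the $D_s$ via \Cref{lem:Ds-derivative}, which gives $D_s=P_G^{(s+1)}(-1)/(s+1)!$. For $1\le s+1\le M-1$, i.e. $0\le s\le M-2$, the vanishing of these derivatives gives $D_s=0$. The case $s+1=M$ gives $D_{M-1}\neq 0$.

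The only subtlety is the edge case. The $D$-chain is indexed from $s=0$, which corresponds to the first derivative, so the statement implicitly requires $M\ge 1$, i.e. $P_G(-1)=0$, equivalently $g(G)=1$ by \Cref{rem:eulerchar-indep}. When $M=0$ the $D$-statement is vacuous or should be read as not applicable, and I would say so explicitly. I would also note that $M\le\alpha$ guarantees $D_{M-1}$ lies in the defined range $0\le s\le\alpha-1$.

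Characteristic issues do not arise, because $P_G$ has integer coefficients and we work over $\mathbb{Q}$ (or $\mathbb{R}$). No real obstacle is expected; the main care is in stating the $M\ge1$ convention.
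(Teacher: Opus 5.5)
Your proposal is correct and follows the route the paper intends. The paper gives the corollary no separate proof, treating it as an immediate consequence of the derivative characterization of root multiplicity and \Cref{lem:Ds-derivative}; your factorization $P_G(x)=(x+1)^MQ(x)$ with the Leibniz rule (or the Taylor expansion at $-1$, which the paper uses in \Cref{prop:h-derivative-expansion}) simply makes that fact explicit. Your caveat that the $D$-chain requires $M\ge 1$ is a fair clarification; alternatively, extending the defining sum for $D_s$ to $s=-1$ gives $D_{-1}=P_G(-1)$, so the statement also holds uniformly when $M=0$.
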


The following result recovers the expression of the $h$-polynomial of edge ideals from \cite{biermann2024degree}.

\begin{prop}\label{prop:h-derivative-expansion}
We have
\[
h_{R/I(G)}(t)=P_G(-1)(1-t)^{\alpha}+\sum_{s=0}^{\alpha-1} D_s(1-t)^{\alpha-s-1}.
\]
\end{prop}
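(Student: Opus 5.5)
The plan is to Taylor-expand $P_G$ about $x=-1$ and substitute into \eqref{eq:h-from-independent-poly}. Since $\deg P_G=\alpha$, Taylor's formula gives
\[
P_G(x)=\sum_{m=0}^{\alpha}\frac{P_G^{(m)}(-1)}{m!}(x+1)^m
=P_G(-1)+\sum_{s=0}^{\alpha-1}D_s(x+1)^{s+1},
\]
where in the second equality we reindex with $m=s+1$ and replace each coefficient $\frac{1}{(s+1)!}P_G^{(s+1)}(-1)$ by $D_s$, using \Cref{lem:Ds-derivative}. No convergence issue arises, because the expansion is an exact polynomial identity over $\mathbb{Q}$ (indeed over $\mathbb{Z}$, since the $D_s$ are integers).

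Next, substitute $x=\frac{t}{1-t}$, as in the proof of \Cref{lem:deg-via-ord}. Then $x+1=\frac{1}{1-t}$, so
\[
P_G\!\left(\frac{t}{1-t}\right)=P_G(-1)+\sum_{s=0}^{\alpha-1}D_s(1-t)^{-(s+1)}.
\]
Multiplying by $(1-t)^{\alpha}$ and invoking \eqref{eq:h-from-independent-poly} yields
\[
h_{R/I(G)}(t)=P_G(-1)(1-t)^{\alpha}+\sum_{s=0}^{\alpha-1}D_s(1-t)^{\alpha-s-1},
\]
which is the claim.

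As a sanity check, \Cref{rem:eulerchar-indep} gives $P_G(-1)=1-g(G)$. Evaluating the Taylor expansion at $x=0$ gives $1=P_G(-1)+\sum_s D_s$, so $P_G(-1)=1-\sum_s D_s$. This matches the leading coefficient $1-\sum_{s=0}^{\alpha-1}D_s$ in \Cref{thm:h_poly_edge}, so the proposition recovers \eqref{eq:h_poly_edge} exactly.

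I do not expect a genuine obstacle here. The only care needed is the index bookkeeping: the Taylor index $m$ runs over $1,\dots,\alpha$, which corresponds to $s=m-1$ running over $0,\dots,\alpha-1$, and the exponents $\alpha-s-1$ must come out nonnegative so that the result is a polynomial in $t$.
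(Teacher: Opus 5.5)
Your proposal is correct and is essentially the paper's proof: Taylor-expand $P_G$ at $x=-1$, substitute $x+1=\frac{1}{1-t}$ (the paper's $u$), multiply by $(1-t)^{\alpha}$, and identify the coefficients with $D_s$ via \Cref{lem:Ds-derivative} with $s=m-1$. Your added check that evaluating at $x=0$ gives $P_G(-1)=1-\sum_s D_s$ confirms that the formula agrees with \Cref{thm:h_poly_edge}.
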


\begin{proof}
As in the proof of \Cref{lem:deg-via-ord}, let $u=\frac{1}{1-t}$ so that
$h_{R/I(G)}(t)=u^{-\alpha}P_G(u-1)$. Expand $P_G$ at $-1$:
\[
P_G(u-1)=\sum_{k=0}^{\alpha}\frac{P_G^{(k)}(-1)}{k!}u^k.
\]
Hence
\[
h_{R/I(G)}(t)=\sum_{k=0}^{\alpha}\frac{P_G^{(k)}(-1)}{k!}u^{k-\alpha}
=\sum_{k=0}^{\alpha}\frac{P_G^{(k)}(-1)}{k!}(1-t)^{\alpha-k}.
\]
The stated formula follows from \Cref{lem:Ds-derivative} (with $k=s+1$).
\end{proof}

\begin{remark}
\Cref{lem:deg-via-ord} is also immediate from \Cref{prop:h-derivative-expansion}:
the first nonzero derivative at $-1$ occurs at order $M(G)$. So, the largest power of $t$
comes from the term $(1-t)^{\alpha-M(G)}$.
\end{remark}

Since $M(G)$ controls $\deg h_{R/I(G)}(t)$ by \Cref{lem:deg-via-ord}, it is useful to understand how
$M(G)$ changes under basic graph operations.

\begin{question}\label{q:operations}
Under which graph operations does the multiplicity $M(G)$ behave in a predictable~way?
\end{question}

As a first test case, we examine induced subgraphs; the next example shows that $M(G)$ need not vary monotonically.

\begin{ex}\label{ex:paths-nonmonotone}
Let $P_n$ denote the path on $n$ vertices. For $n=2,3,4,5$ the independence polynomials are
\begin{align*}
   P_{P_2}(x)&=1+2x,\\
   P_{P_3}(x)&=1+3x+x^2,\\ 
   P_{P_4}(x)&=1+4x+3x^2,\\
   P_{P_5}(x)&=1+5x+6x^2+x^3.
\end{align*}
Evaluating at $x=-1$ gives
\[
P_{P_2}(-1)=-1,\qquad
P_{P_3}(-1)=-1,\qquad
P_{P_4}(-1)=0,\qquad
P_{P_5}(-1)=1.
\]
Hence $M(P_4)=1$, while $M(P_2)=M(P_3)=M(P_5)=0$.
In particular, although $P_4$ is an induced subgraph of $P_5$, we have $M(P_4)>M(P_5)$; and although $P_3$ is an induced subgraph of $P_4$, we have $M(P_3)<M(P_4)$.
\end{ex}

Thus $M(G)$ can increase, decrease, or remain unchanged when passing to induced subgraphs. This motivates a search for graph operations under which $M(G)$ behaves more systematically,
and we hope it prompts further study of $M(G)$ in its own right.

\section{Trees of radius at most two: $(\reg,\deg h)$ for cover ideals}
\label{sec:rad2-trees}

In this section we study trees of radius at most $2$. Throughout, let $T$ be a tree of radius at most 2 and $R_T=\Bbbk[x_v:v\in V(T)]$. We  first compute $\pdim(R_T/I(T))$, and hence $\reg(R_T/J(T))$ by Alexander duality. Then, we determine the degree of the $h$-polynomial of $R_T/J(T)$ via the multiplicity
$M(T)$ from \Cref{sec:deg-h-roots-independence}.

We begin by setting our notation.

\begin{notation}\label{not:radius2}
Let $T$ be a tree of radius at most $2$. In other words, there is a vertex $c$ such that
$\dist_T(c,v)\le 2$ for all $v\in V(T)$. Fix such a vertex $c$ and set $B=\deg_T(c)$. 
Write the neighbor set of $c$ as a disjoint union
\[
N_T(c)=\{\ell_1,\dots,\ell_{L_1}\}\sqcup\{v_1,\dots,v_m\},
\]
where $\ell_1,\dots,\ell_{L_1}$ are leaves adjacent to $c$, and each $v_i$ has
at least one leaf away from $c$ (equivalently, $\deg_T(v_i)\ge 2$). Thus $m=B-L_1$.
For each $i$, set
\[
t_i  =  \bigl|N_T(v_i)\setminus\{c\}\bigr| \ \ge 1,
\qquad\text{and}\qquad
L_2  =  \sum_{i=1}^m t_i.
\]
Then $L_2$ is the number of vertices at distance $2$ from $c$ (all of which are leaves).
Note that $T$ has radius $2$ if and only if $L_2>0$ (equivalently $m>0$), and radius $1$
if and only if $L_2=0$ (so $T\cong K_{1,B}$).

Using this notation, we have \(|V(T)|=n=1+B+L_2\). Set \(p(T)\ :=\ \pdim(R_T/I(T))\).
\end{notation}

\begin{figure}[ht]
    \centering
\includegraphics[width=0.25\linewidth]{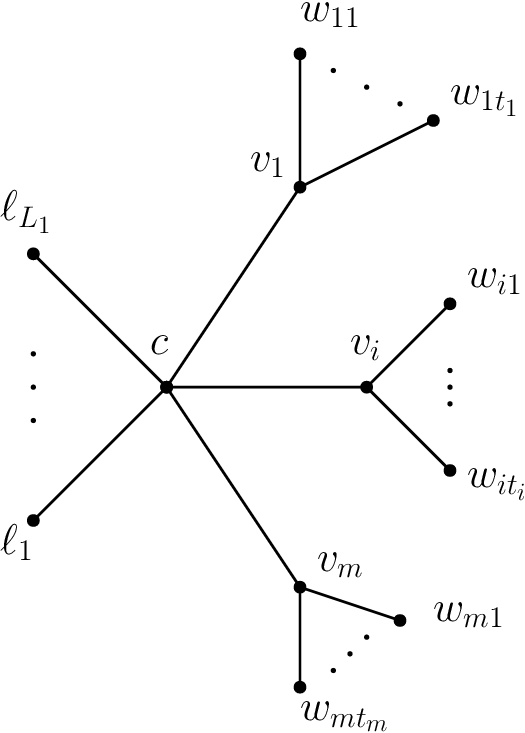}
    \caption{A tree of radius at most $2$ with parameters as in \Cref{not:radius2}.}
    \label{fig:gen_tree_rad_2}
\end{figure}

\subsection{Projective dimension and regularity}

The following recursive result of Jacques and Katzman is our main tool.

\begin{thm}{\cite[Theorem 4.8]{jacques2005betti}}\label{thm:Jacques_Katzman}
Let $F$ be a forest containing a vertex of degree at least two. Then there exists a vertex
$v$ with neighbors
\[
N_F(v)=\{v_1,\dots,v_n\}, \qquad n\ge 2,
\]
such that $v_1,\dots,v_{n-1}$ are leaves. Define
\[
F' = F\setminus\{v_1\}, \qquad
F'' = F\setminus\{v,v_1,\dots,v_n\}.
\]
Then
\[
\pdim(R_F/I(F))  =  \max\bigl\{ \pdim(R_{F'}/I(F')),\ \pdim(R_{F''}/I(F''))+n\bigr\},
\]
where $R_H=\Bbbk[x_u:u\in V(H)]$ for any forest $H$.
\end{thm}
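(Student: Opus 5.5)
We first establish the existence of the vertex $v$ by a longest-path argument, and then derive the projective dimension formula from a Betti splitting of $I(F)$.

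\textbf{Existence of $v$.} Since $F$ has a vertex of degree at least two, some component $T$ of $F$ has at least three vertices. Take a longest path $u_0,u_1,\dots,u_k$ in $T$; then $k\ge 2$. Set $v=u_1$, $v_1=u_0$ and $v_n=u_2$. Every other neighbor $w$ of $u_1$ must be a leaf. Indeed, if $w$ had a neighbor other than $u_1$, that neighbor could not lie on the path, because $T$ is acyclic. Replacing $u_0$ by $w$ and that neighbor would then produce a longer path. The same argument applied to $u_0$ shows that $u_0$ is a leaf. Hence $v_1,\dots,v_{n-1}$ are leaves and $n=\deg v\ge 2$.

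\textbf{Formula.} We split the edge ideal as
\[
I(F)=J+K,\qquad J=(x_vx_{v_1}),\qquad K=I(F').
\]
Since $v_1$ is a leaf, the only generator of $I(F)$ divisible by $x_{v_1}$ is $x_vx_{v_1}$. So $J$ is exactly the ideal generated by the minimal generators divisible by $x_{v_1}$, and $K$ is generated by the rest. Because $J$ is principal, it has a linear resolution. By the criterion of Francisco--H\`a--Van Tuyl for such $x_{v_1}$-partitions, $J+K$ is a Betti splitting. Consequently
\[
\pdim I(F)=\max\{\pdim J,\ \pdim K,\ \pdim(J\cap K)+1\}.
\]

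\textbf{The intersection term.} Since $v_1$ is a leaf, we compute
\[
J\cap K=x_vx_{v_1}\bigl(I(F'):x_vx_{v_1}\bigr)=x_vx_{v_1}\bigl((x_{v_2},\dots,x_{v_n})+I(F'')\bigr).
\]
The variables $x_{v_2},\dots,x_{v_n}$ do not appear in $I(F'')$. Resolutions over disjoint sets of variables tensor together, so the projective dimension of the quotient by this sum is $(n-1)+\pdim(R_{F''}/I(F''))$. Multiplying by a monomial does not change the projective dimension of the ideal. Therefore
\[
\pdim(J\cap K)=\pdim(R_{F''}/I(F''))+n-2.
\]

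\textbf{Conclusion.} Substituting into the splitting formula, and using $\pdim J=0\le \pdim K$, we get
\[
\pdim I(F)=\max\{\pdim I(F'),\ \pdim(R_{F''}/I(F''))+n-1\}.
\]
Adding $1$ to pass from ideals to quotients yields the stated identity. The $\max$ with $\pdim J=0$ is harmless, and the degenerate cases are consistent: if $F''$ or $F'$ has no edges, the corresponding quotient has projective dimension $0$.

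\textbf{Main obstacle.} The main point to verify carefully is that the splitting is genuinely a Betti splitting, i.e.\ that the hypotheses of the Francisco--H\`a--Van Tuyl result hold. The other delicate step is the colon computation $I(F'):x_vx_{v_1}=(x_{v_2},\dots,x_{v_n})+I(F'')$. This uses that $v_1$ is a leaf, so no edge of $F'$ involves $v_1$, and that any edge of $F'$ meeting some $v_j$ with $j\ge 2$ is absorbed by the variable $x_{v_j}$.
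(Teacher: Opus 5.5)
The paper gives no proof of this statement. It is imported from \cite{jacques2005betti} and used as a black box (for instance in \Cref{prop:radius2-pdim}), so your argument has to stand on its own. It does, and it is correct.

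\textbf{Existence of $v$.} The longest-path argument correctly produces $v$ with $v_1,\dots,v_{n-1}$ leaves.

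\textbf{The splitting.} Because $v_1$ is a leaf, $I(F)=(x_vx_{v_1})+I(F')$ is precisely the $x_{v_1}$-partition. A principal ideal has a linear resolution, so this is a Betti splitting by the Francisco--H\`a--Van Tuyl criterion.

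\textbf{The intersection term.} The colon computation, together with the tensor decomposition over disjoint sets of variables, gives $\pdim(J\cap K)=\pdim(R_{F''}/I(F''))+n-2$.

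Two details deserve one line each in a written version:
\begin{itemize}
\item $K$ and $J\cap K$ are nonzero because $n\ge 2$ (the edge $vv_n$survives in $F'$). This is what justifies $\pdim(R/N)=\pdim(N)+1$ for these ideals $N$.
\item Passing between $R_F$, $R_{F'}$ and $R_{F''}$ only adjoins free variables, which does not change projective dimension.
\end{itemize}

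Compared with simply quoting the result, your route is self-contained and yields more.
\begin{itemize}
\item The hypotheses that $F$ is a forest and that $v_2,\dots,v_{n-1}$ are leaves are used only to guarantee that a suitable $v$ exists. The splitting and colon steps never use them. So the identity holds for any graph $G$ with a leaf $v_1$ whose neighbor $v$ has degree $n\ge 2$, taking $F''=G\setminus N_G[v]$.
\item Tracking graded pieces in $\beta_{i,j}(I)=\beta_{i,j}(J)+\beta_{i,j}(K)+\beta_{i-1,j}(J\cap K)$, with the Koszul factor on $x_{v_2},\dots,x_{v_n}$, yields the full recursion
\[
\beta_{i,d}(R/I(F))=\beta_{i,d}(R/I(F'))+\sum_{l\ge 0}\binom{n-1}{l}\,\beta_{i-1-l,\,d-2-l}(R/I(F''))\qquad (i\ge 1).
\]
All terms are nonnegative, so reading off the top homological degree gives exactly the stated formula for $\pdim$.
\end{itemize}
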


\begin{remark}\label{rem:isolated-vertices-pdim}
The forest $F''$ in \Cref{thm:Jacques_Katzman} may contain isolated vertices. These do not affect
projective dimension; they contribute extra variables but no generators to the edge ideal.
\end{remark}

\begin{thm}\label{prop:radius2-pdim}
Let $T$ be as in \Cref{not:radius2}. Then
\[
p(T) = \max\{B,\ L_2+1\}.
\]
In particular, \(\reg(R_T/J(T))=\max\{B-1,\ L_2\}\).

\end{thm}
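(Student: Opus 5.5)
The plan is to use that $T$ is chordal, so its projective dimension is governed by the independent domination number $i(T)$ via \Cref{thm:pdim_chordal}. Everything then reduces to the combinatorial computation
\[
i(T)=\min\{B,\ 1+L_2\},
\]
combined with $n=1+B+L_2$. There is a normalization issue: the star $K_{1,B}$, for which $\pdim(R/I)=B$ and $i=1$, pins the correct form of the relation as $\pdim(R_T/I(T))=n-i(T)$. I will check the final identity against this example. With that form,
\[
p(T)=n-\min\{B,1+L_2\}=\max\{1+L_2,\ B\},
\]
which is the claim. The regularity statement then follows from Terai's duality $\reg(R_T/J(T))=\pdim(R_T/I(T))-1$.

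To compute $i(T)$, I will classify the maximal independent sets $S$ of $T$ according to whether $c\in S$.

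If $c\in S$, then no neighbor of $c$ lies in $S$. Maximality then forces every leaf at distance $2$ into $S$, because such a leaf's only neighbor is some $v_i\notin S$. So $S=\{c\}\cup\{\text{all } L_2 \text{ distance-}2 \text{ leaves}\}$, and $|S|=1+L_2$.

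If $c\notin S$, then every $\ell_j$ must lie in $S$, since its only neighbor is $c$. For each $i$, either $v_i\in S$, or else all $t_i\ge 1$ leaves of $v_i$ lie in $S$. So $|S|=L_1+\sum_i \epsilon_i$ with $\epsilon_i\in\{1,t_i\}$. This is minimized by taking all $v_i$, which gives $|S|=L_1+m=B$. This set is maximal: $c$ is dominated because $B\ge1$, and the distance-$2$ leaves are dominated by the $v_i$.

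Hence $i(T)=\min\{B,1+L_2\}$. The radius-$1$ case $L_2=0$ gives $i=1$ and is consistent with the formula.

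The main obstacle is the normalization in \Cref{thm:pdim_chordal}. If it is read literally as $\pdim(R/I)-1=n-i$, the conclusion is off by one, as the star example shows. To make the argument independent of that convention, I would give a second, self-contained proof by induction on $L_2$ using \Cref{thm:Jacques_Katzman}.

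The base case $L_2=0$ is the star, where $p=B$ is standard since $I$ is $x_c$ times a linear prime.

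For $L_2>0$, apply the theorem at $v=v_1$, whose neighbors are its $t_1$ leaves together with $c$, so the theorem's $n$ equals $t_1+1$. Then:
\begin{itemize}
\item $F'$ is again a radius-at-most-$2$ tree with $L_2$ decreased by one. If $t_1=1$, the vertex $v_1$ becomes a leaf of $c$, so $L_1$ increases by one while $B$ is unchanged.
\item $F''$ consists of the stars centered at $v_2,\dots,v_m$ together with isolated vertices (\Cref{rem:isolated-vertices-pdim}). Projective dimension is additive over disjoint unions, so $\pdim(F'')=L_2-t_1$.
\end{itemize}
The recursion then gives
\[
p(T)=\max\{\max\{B,L_2\},\ L_2+1\}=\max\{B,L_2+1\},
\]
using the induction hypothesis $p(F')=\max\{B,L_2\}$. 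This closes the induction and confirms the formula.
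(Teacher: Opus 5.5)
Your proposal is correct and contains two proofs. The second, the induction on $L_2$ via \Cref{thm:Jacques_Katzman} at $v=v_1$ with $p(F')=\max\{B,L_2\}$ and $p(F'')=L_2-t_1$, is exactly the paper's argument. Your remark that when $t_1=1$ the vertex $v_1$ becomes a leaf at $c$ (changing $L_1$ and $m$ but not $B$) makes explicit a point the paper passes over.

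Your primary route is genuinely different. You combine the Dao--Schweig formula for chordal graphs with a direct analysis of maximal independent sets to get $i(T)=\min\{B,1+L_2\}$. Then $n=1+B+L_2$ gives $p(T)=n-i(T)=\max\{B,L_2+1\}$. Your reading of the normalization is right. \Cref{thm:pdim_chordal} as printed is off by one, and the correct form $\pdim(R/I(G))=n-i(G)$ is the one the paper itself uses later, e.g.\ $\reg(R/J(G))=n-i(G)-1$ in the split-graph and $\deg h=\reg-1$ sections.

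The chordal route is shorter and explains the answer conceptually. The two terms of the maximum come from the two candidates for a smallest maximal independent set: $\{\ell_j\}\cup\{v_i\}$ of size $B$, versus $c$ together with all distance-$2$ leaves, of size $1+L_2$. However, it rests on a deeper external theorem. The paper's Jacques--Katzman induction is more elementary and self-contained given that recursion. It also parallels the Jacques--Katzman-type recursion for $M(F)$ that the paper develops in \Cref{sec:deg-h-recursion-forests}.
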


\begin{proof}
We induct on $L_2\ge 0$.

If $L_2=0$, then $T\cong K_{1,B}$ (a star) and $p(T)=B$. So, the formula holds for the base case.

Assume $L_2\ge 1$ and the formula holds for all trees of radius at most $2$ with strictly fewer
distance--$2$ leaves. Choose an index $i$. Then $v_i$ has neighbors
\[
N_T(v_i)=\{c,w_{i1},\dots,w_{i t_i}\},
\]
where $w_{i1},\dots,w_{i t_i}$ are leaves and $\deg_T(v_i)=t_i+1$.
Apply \Cref{thm:Jacques_Katzman} with $v=v_i$, $v_1=w_{i1}$, and the remaining neighbors
$v_2,\dots,v_n$ taken to be $w_{i2},\dots,w_{i t_i},c$ (so $n=t_i+1$). Then
\[
T' = T\setminus\{w_{i1}\},\qquad
T'' = T\setminus\{v_i,c,w_{i1},\dots,w_{i t_i}\}.
\]

In $T'$, the vertex $c$ is still a center of degree $B$ and $L_2$ drops by $1$. Hence, by induction,
\[
p(T')=\max\{B,(L_2-1)+1\}=\max\{B,L_2\}.
\]

In $T''$, removing $c$ makes each $\ell_j$ isolated, and for each $j\ne i$ the vertex $v_j$ becomes the
center of a star $K_{1,t_j}$. Thus $T''$ is a disjoint union of stars together with isolated vertices, and
by the additivity of projective dimension under disjoint union (and \Cref{rem:isolated-vertices-pdim}),
\[
p(T'')=\sum_{j\ne i} p(K_{1,t_j})=\sum_{j\ne i} t_j = L_2-t_i.
\]
Therefore \Cref{thm:Jacques_Katzman} yields
\begin{align*}
 p(T)& =\max\{p(T'),\ p(T'')+\deg_T(v_i)\}\\ 
& =\max\bigl\{\max\{B,L_2\},\ (L_2-t_i)+(t_i+1)\bigr\}\\
&=\max\{B,L_2+1\},   
\end{align*}
as claimed.

The regularity statement follows from \Cref{prop:radius2-pdim}. 
\end{proof}

\subsection{Degree of the $h$-polynomial}

We next compute $P_T(x)$ and $M(T)$, and then apply
\Cref{lem:deg-via-ord} to obtain the degree.

\begin{prop}\label{prop:PT}
Let $T$ be as in \Cref{not:radius2}. Suppose $L_2>0$. Then
\[
P_T(x)= (1+x)^{L_1}\prod_{i=1}^m\Bigl(x+(1+x)^{t_i}\Bigr) + x(1+x)^{L_2}.
\]
\end{prop}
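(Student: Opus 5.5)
We condition on whether the center $c$ lies in the independent set and use the standard vertex-deletion recursion for independence polynomials:
\[
P_G(x)=P_{G\setminus v}(x)+x\,P_{G\setminus N[v]}(x).
\]
We apply it with $v=c$. Both pieces then split into disjoint unions, and for those we use multiplicativity, $P_{G_1\sqcup G_2}=P_{G_1}P_{G_2}$.

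First, consider $T\setminus N_T[c]$. Removing $c$, the leaves $\ell_1,\dots,\ell_{L_1}$ and the vertices $v_1,\dots,v_m$ leaves exactly the $L_2$ distance-$2$ leaves $w_{ij}$. These are pairwise nonadjacent isolated vertices, so
\[
P_{T\setminus N[c]}(x)=(1+x)^{L_2},
\]
which gives the term $x(1+x)^{L_2}$.

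Second, consider $T\setminus c$. It is the disjoint union of $L_1$ isolated vertices $\ell_j$ and the $m$ stars $K_{1,t_i}$ centered at $v_i$. Each isolated vertex contributes a factor $1+x$. For a star $K_{1,t}$, an independent set either contains the center, in which case it is just the center and contributes $x$, or avoids the center, in which case it is an arbitrary subset of the $t$ leaves and contributes $(1+x)^t$. Hence
\[
P_{K_{1,t}}(x)=x+(1+x)^t.
\]
Multiplying over all components gives
\[
P_{T\setminus c}(x)=(1+x)^{L_1}\prod_{i=1}^{m}\bigl(x+(1+x)^{t_i}\bigr).
\]

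Summing the two contributions yields the stated formula. The hypothesis $L_2>0$ only guarantees that $m\ge 1$, so the product is nonempty and the notation is consistent; the computation itself goes through regardless. I do not expect any real obstacle here. The only point needing care is justifying the deletion recursion, which I would state explicitly: independent sets avoiding $c$ are exactly the independent sets of $T\setminus c$, and independent sets containing $c$ are exactly $\{c\}\cup S$ with $S$ an independent set of $T\setminus N[c]$.
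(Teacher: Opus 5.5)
Your proposal is correct and follows the paper's proof: the paper also splits independent sets according to whether $c$ is chosen, obtaining $(1+x)^{L_1}\prod_{i=1}^m\bigl(x+(1+x)^{t_i}\bigr)$ when $c$ is absent and $x(1+x)^{L_2}$ when $c$ is present. Your phrasing through the deletion recursion and multiplicativity over disjoint unions is only a repackaging of that same count. Your side remark is also right: when $L_2=0$ the formula still holds and reduces to $P_{K_{1,B}}(x)=(1+x)^B+x$.
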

\begin{proof}
We count independent sets in two disjoint cases.

Suppose $c$ is not chosen.
We may choose any subset of the $L_1$ leaves $\ell_1,\dots,\ell_{L_1}$, contributing $(1+x)^{L_1}$.
Independently, for each branch $v_i$ we either choose $v_i$ (contributing $x$) or do not choose $v_i$
and then choose any subset of its $t_i$ leaves (contributing $(1+x)^{t_i}$). Thus branch $i$ contributes
$x+(1+x)^{t_i}$. Multiplying gives
\[
(1+x)^{L_1}\prod_{i=1}^m\bigl(x+(1+x)^{t_i}\bigr).
\]
Suppose $c$ is chosen.
This contributes $x$, forces all neighbors of $c$ to be absent, and allows any subset of the $L_2$
distance--$2$ leaves, contributing $(1+x)^{L_2}$. Thus this case contributes $x(1+x)^{L_2}$.

\smallskip
Adding the two cases yields the stated formula.
\end{proof}

\begin{remark}\label{lem:alpha-rad2}
Let $T$ be as in \Cref{not:radius2}. Then
\[
\alpha(T)=
\begin{cases}
B, & \text{if } L_2=0,\\[2pt]
L_2+1, & \text{if } L_2>0 \text{ and } L_1=0,\\[2pt]
L_1+L_2, & \text{if } L_2>0 \text{ and } L_1>0.
\end{cases}
\]
\end{remark}

One can express $M(T)$ using the parameters of $T$.

\begin{lem}\label{lem:MT-rad2}
Let $T$ be as in \Cref{not:radius2}. Then the multiplicity $M(T)$ is
\[
M(T)=
\begin{cases}
0, & \text{if } L_2=0,\\[2pt]
0, & \text{if } L_2>0 \text{ and } L_1=0,\\[2pt]
\min\{L_1,L_2\}, & \text{if } L_1>0 \text{ and } L_1\neq L_2,\\[2pt]
L_1, & \text{if } L_1=L_2 \text{ and } m \text{ is odd},\\[2pt]
L_1+1, & \text{if } L_1=L_2 \text{ and } m \text{ is even}.
\end{cases}
\]
\end{lem}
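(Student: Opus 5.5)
The plan is to read off $M(T)=\ord_{x=-1}P_T(x)$ directly from the closed form in \Cref{prop:PT}, after handling the radius-one case separately. The natural change of variable is $y=1+x$, so that $M(T)=\ord_{y=0}$ of $P_T$ written as a polynomial in $y$. All coefficients are integers, so the ground field plays no role.

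\emph{Case $L_2=0$.} Here $T\cong K_{1,B}$ and $P_T(x)=(1+x)^B+x$. Then $P_T(-1)=-1\neq 0$, so $M(T)=0$.

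\emph{Case $L_2>0$.} \Cref{prop:PT} becomes
\[
P_T=y^{L_1}Q(y)+(y-1)y^{L_2},\qquad Q(y):=\prod_{i=1}^m\bigl(y-1+y^{t_i}\bigr).
\]
Each factor $y-1+y^{t_i}$ takes the value $-1$ at $y=0$, because $t_i\ge1$. Hence $Q(0)=(-1)^m\neq 0$, and so $\ord_{y=0}\bigl(y^{L_1}Q\bigr)=L_1$ exactly. Likewise $\ord_{y=0}\bigl((y-1)y^{L_2}\bigr)=L_2$ exactly, with lowest coefficient $-1$. The remaining cases follow from these two orders.

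\emph{Subcase $L_1=0$.} Evaluating at $y=0$ gives $P_T(-1)=(-1)^m\neq 0$, since the second term vanishes when $L_2>0$. So $M(T)=0$.

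\emph{Subcase $L_1>0$ and $L_1\neq L_2$.} The two summands have distinct orders at $y=0$, so no cancellation is possible. Hence $M(T)=\min\{L_1,L_2\}$.

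\emph{Subcase $L_1=L_2=k$.} Factor $P_T=y^k\bigl(Q(y)+y-1\bigr)$. It remains to compute $\ord_{y=0}$ of $S(y):=Q(y)+y-1$. First, $S(0)=(-1)^m-1$.
\begin{itemize}
\item If $m$ is odd, then $S(0)=-2\neq0$, so $M(T)=k=L_1$.
\item If $m$ is even, then $S(0)=0$, so we must show that the linear coefficient $S'(0)=Q'(0)+1$ is nonzero.
\end{itemize}

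The main obstacle is this last linear coefficient, so the plan there is explicit. By the product rule,
\[
Q'(0)=\sum_{i}\bigl(1+t_i\,0^{t_i-1}\bigr)\prod_{j\ne i}(-1)=(-1)^{m-1}\sum_{i=1}^m c_i,
\]
where $c_i=2$ if $t_i=1$ and $c_i=1$ otherwise. For even $m$ this gives $S'(0)=1-\sum_i c_i$. Since $L_2>0$ forces $m\ge1$, even $m$ means $m\ge 2$, so $\sum_i c_i\ge 2$ and $S'(0)\le -1\neq 0$. Therefore $M(T)=L_1+1$, which completes the case list in the statement.
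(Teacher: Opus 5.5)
Your proposal is correct and essentially the paper's proof: the same case split on $L_1,L_2$, the same factorization of the closed form for $P_T$, and the same first-derivative computation when $L_1=L_2$ and $m$ is even. The only differences are cosmetic: you work in $y=1+x$ rather than at $x=-1$, and your observation that even $m$ forces $m\ge 2$ makes explicit the step the paper leaves implicit when it asserts $1-m\neq 0$.
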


\begin{proof}
If $L_2=0$, then $T\cong K_{1,B}$ and $P_T(x)=(1+x)^B+x$. Hence $P_T(-1)=-1\neq 0$ and $M(T)=0$.

Assume $L_2>0$. If $L_1=0$, then \Cref{prop:PT} gives
\[
P_T(-1)=\prod_{i=1}^m\bigl(-1+(1-1)^{t_i}\bigr)=(-1)^m\neq 0,
\]
so again $M(T)=0$.

Now assume $L_1>0$ and set $L=\min\{L_1,L_2\}$. By \Cref{prop:PT},
\[
P_T(x)=(1+x)^L Q(x),
\]
where
\[
Q(x)=(1+x)^{L_1-L}\prod_{i=1}^m\bigl(x+(1+x)^{t_i}\bigr)+x(1+x)^{L_2-L}.
\]
If $L_1\neq L_2$, then $Q(-1)\neq 0$:
\begin{itemize}
\item If $L_1<L_2$, then $Q(-1)=\prod_{i=1}^m(-1)+(-1)\cdot 0 = (-1)^m\neq 0$.
\item If $L_2<L_1$, then $Q(-1)=0\cdot\prod_{i=1}^m(-1)+(-1)=-1\neq 0$.
\end{itemize}
Thus $M(T)=L=\min\{L_1,L_2\}$.

Finally, assume $L_1=L_2=U$. Then
\[
P_T(x)=(1+x)^U Q(x)
\qquad\text{with}\qquad
Q(x)=\Big(\prod_{i=1}^m f_i(x)\Big)+x,\quad f_i(x)=x+(1+x)^{t_i}.
\]
Since $f_i(-1)=-1$ for all $i$, we have
\[
Q(-1)=(-1)^m-1=
\begin{cases}
-2, &\text{if } m \text{ odd},\\
0, & \text{if } m \text{ even}.
\end{cases}
\]
If $m$ is odd then $Q(-1)\neq 0$ and $M(T)=U$.

If $m$ is even, then $Q(-1)=0$. Differentiating,
\[
Q'(x)=\sum_{i=1}^m f_i'(x)\prod_{j\neq i} f_j(x) + 1,
\]
so using $f_j(-1)=-1$ gives
\[
Q'(-1)=(-1)^{m-1}\sum_{i=1}^m f_i'(-1)+1.
\]
Moreover $f_i'(x)=1+t_i(1+x)^{t_i-1}$. Hence $f_i'(-1)\in\{1,2\}$ and in particular
$\sum_i f_i'(-1)\ge m$. Since $m$ is even, $(-1)^{m-1}=-1$, and therefore
\[
Q'(-1)=1-\sum_{i=1}^m f_i'(-1)\le 1-m\neq 0.
\]
Thus $-1$ is a simple root of $Q$. So $M(T)=U+1=L_1+1$.
\end{proof}

Now, we are ready to express the degree of the $h$-polynomial of cover ideals.

\begin{cor}\label{cor:rad2_cover}
Let $T$ be as in \Cref{not:radius2}. Then
\[
\deg h_{R_T/J(T)}(t)=
\begin{cases}
B-1, & \text{if } L_2=0,\\[2pt]
B+L_2-1, & \text{if } L_1=0,\\[2pt]
m+\max\{L_1,L_2\}-1, & \text{if } L_1>0 \text{ and } L_1\neq L_2,\\[2pt]
B-1, & \text{if } L_1=L_2 \text{ and } m \text{ is odd},\\[2pt]
B-2, & \text{if } L_1=L_2 \text{ and } m \text{ is even}.
\end{cases}
\]
\end{cor}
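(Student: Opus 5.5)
This is a direct substitution into \Cref{lem:deg-via-ord}. That theorem gives $\deg h_{R_T/J(T)}(t)=n-2-M(T)$. By \Cref{not:radius2} we have $n=1+B+L_2$ and $B=L_1+m$, so
\[
\deg h_{R_T/J(T)}(t)=B+L_2-1-M(T)=L_1+L_2+m-1-M(T).
\]
I then go through the five cases of \Cref{lem:MT-rad2} and insert the value of $M(T)$ in each.

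\textbf{Cases where $M(T)=0$.} If $L_2=0$, then $M(T)=0$, and the formula gives $B+0-1=B-1$. If $L_2>0$ and $L_1=0$, again $M(T)=0$, giving $B+L_2-1$. The statement's case ``$L_1=0$'' also includes $L_2=0$; there $B+L_2-1=B-1$, so the two formulas agree and the overlap causes no conflict.

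\textbf{Case $L_1>0$ and $L_1\neq L_2$.} Here $M(T)=\min\{L_1,L_2\}$. Using $L_1+L_2-\min\{L_1,L_2\}=\max\{L_1,L_2\}$, the degree is $m+\max\{L_1,L_2\}-1$. This case implicitly allows $L_2=0$ with $L_1>0$, which is then a star. In that situation $m=0$ and $\max\{L_1,L_2\}=L_1=B$, so the formula gives $B-1$, consistent with the first case.

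\textbf{Case $L_1=L_2$.} If $m$ is odd, then $M(T)=L_1$, and the degree is $L_1+m+L_2-1-L_1=B-1$, using $B=L_1+m$ and $L_2=L_1$. If $m$ is even, then $M(T)=L_1+1$, and the degree is $B-2$.

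The only step needing care is this bookkeeping: matching the overlapping case conditions of the corollary with those of \Cref{lem:MT-rad2}, as checked above. Beyond that, no genuine obstacle is expected.
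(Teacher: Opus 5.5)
Your proposal is correct and follows the paper's own argument: write $\deg h_{R_T/J(T)}(t)=n-2-M(T)=(B+L_2-1)-M(T)$, then substitute $B=L_1+m$ and the case values of $M(T)$ from \Cref{lem:MT-rad2}. Your extra check that the overlapping case conditions give consistent values is a small addition; the paper leaves it implicit.
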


\begin{proof}
By \Cref{thm:deg_edge_cover} and \Cref{lem:deg-via-ord}, we have
\begin{equation}\label{eq:deg-cover-via-M}
\deg h_{R_T/J(T)}(t)=(B+L_2-1)-M(T).
\end{equation}
The displayed piecewise expression is obtained by substituting $B=L_1+m$ and the corresponding
values of $M(T)$ from \Cref{lem:MT-rad2} in \Cref{eq:deg-cover-via-M}.
\end{proof}

\subsection{Realizable $(\reg,\deg h)$ pairs}
Let $(r,d)=\bigl(\reg(R_T/J(T)),\ \deg h_{R_T/J(T)}(t)\bigr)$. 
We now describe the set of realizable pairs
\[
\mathcal{T}_n
=\Bigl\{(r,d)\ :\ T \text{ is a tree of radius at most $2$ on } n \text{ vertices}\Bigr\}.
\]
\begin{thm}\label{cor:radius2-realizable-pairs}
Fix $n\ge 4$ and define
\[
\mathcal{A}_n=\Bigl\{(r,d)\in\mathbb{Z}^2:\ 
\Bigl\lceil\frac{n-2}{2}\Bigr\rceil\le r\le n-2,\ 
r\le d\le \min\{n-2,\ 2r-1\}\Bigr\}.
\]
If $n$ is odd, also set
\[
\mathcal{B}_n
=\Bigl\{(r,r-1):\ 
\Bigl\lceil\frac{n-2}{2}\Bigr\rceil\le r\le 
\Bigl\lfloor\frac{2n-5}{3}\Bigr\rfloor
\Bigr\}.
\]
Then:
\begin{enumerate}
\item[(i)] If $n$ is even, then $\mathcal{T}_n=\mathcal{A}_n$.
\item[(ii)] If $n$ is odd, then $\mathcal{T}_n=\mathcal{A}_n \cup \mathcal{B}_n$.
\end{enumerate}
\end{thm}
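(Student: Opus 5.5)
The plan is a direct case analysis over the parameters $(B,L_1,L_2,m,t_1,\dots,t_m)$ of \Cref{not:radius2}. By \Cref{prop:radius2-pdim} we have $r=\max\{B-1,L_2\}$, and by \Cref{cor:rad2_cover} we get $d$ piecewise. Two constraints link the parameters: $n=1+B+L_2$, so $(B-1)+L_2=n-2$, and $L_2=\sum t_i\ge m$ with $B=L_1+m$. The proof then splits into two inclusions: every tree gives a pair in $\mathcal{A}_n$ (or in $\mathcal{B}_n$ when $n$ is odd), and every such pair is realized by some tree.

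\emph{Inclusion $\mathcal{T}_n\subseteq\mathcal{A}_n\cup\mathcal{B}_n$.} Since $(B-1)+L_2=n-2$, we have $r\ge\lceil (n-2)/2\rceil$, and $r\le n-2$ is clear. Also $d\le n-2$ always, by \Cref{cor:degree}. I would then run through the five cases of \Cref{cor:rad2_cover}.
\begin{itemize}
\item If $L_2=0$ (a star), the pair is $(n-2,n-2)$.
\item If $L_1=0$, then $B=m\le L_2$, so $r=L_2$ and $d=n-2$.
\item If $L_1>L_2$, then $d=B-1=r$, using $L_2<L_1\le B-1$.
\item If $0<L_1<L_2$, then $d=m+L_2-1\ge r$.
\item If $L_1=L_2$ with $m$ odd, then $d=B-1$ and $r=B-1$, since $L_1\le B-1$ when $m\ge1$.
\item If $L_1=L_2$ with $m$ even, then $d=B-2=r-1$.
\end{itemize}
For the bound $d\le 2r-1$, note that $d\le n-2\le 2r$. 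Equality $d=2r$ would force $B-1=L_2$ and $M(T)=0$, hence $L_1=0$ or $L_2=0$, and both contradict $B-1=L_2$ together with $m\le L_2$.

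\emph{The off-diagonal case $L_1=L_2$, $m$ even.} Here $m\ge2$ and $n=2L_1+m+1$ is odd, which explains why $\mathcal{B}_n$ appears only for odd $n$. We have $r=L_1+m-1=(n-3+m)/2$. The constraint $L_1=L_2\ge m$ gives $m\le (n-1)/3$. Letting $m$ run over the even integers in $[2,(n-1)/3]$ yields $r$ from $(n-1)/2=\lceil (n-2)/2\rceil$ up to $(n-3+m_{\max})/2$. I would check that this upper end equals $\lfloor (2n-5)/3\rfloor$, which gives exactly $\mathcal{B}_n$. Realizability within this case is immediate: take $m$ even, $t_i\ge1$ summing to $L_2=L_1$.

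\emph{Inclusion $\mathcal{A}_n\subseteq\mathcal{T}_n$.} Given $(r,d)\in\mathcal{A}_n$, I would construct a tree in one of three ways.
\begin{itemize}
\item The star gives $(n-2,n-2)$.
\item The case $L_1=0$ gives $(L_2,n-2)$ for every $L_2$ with $\lceil (n-1)/2\rceil\le L_2\le n-2$. Set $m=B=n-1-L_2\le L_2$ and choose $t_i\ge1$ summing to $L_2$.
\item For $r\le d<n-2$, use the case $0<L_1<L_2$ with $r=L_2\ge B-1$ and $m=d-r+1$. Then $L_1=n-1-L_2-m=(n-2)-d$, which is positive. The needed inequalities $L_1<L_2$, $m\le L_2$ and $B-1\le L_2$ all follow from $r\ge\lceil (n-2)/2\rceil$ and $d\le 2r-1$.
\end{itemize}
The remaining diagonal points with $r=B-1>L_2$ are already covered by the $L_1>L_2$ construction, or equivalently by the $L_2>L_1$ construction with $m=1$.

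I expect the main obstacle to be the boundary bookkeeping. One part is matching the extreme value of $r$ in $\mathcal{B}_n$ with $\lfloor (2n-5)/3\rfloor$ across the residues of $n$ modulo $6$. The other is making sure that the small edge cases ($d=2r-1$, and $r=\lceil (n-2)/2\rceil$ for even versus odd $n$) are genuinely realized, so that no point of $\mathcal{A}_n$ slips through.
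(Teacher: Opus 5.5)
Your proposal is correct and is essentially the paper's proof: the inclusion is the same case analysis through \Cref{prop:radius2-pdim} and \Cref{cor:rad2_cover} (with $d<r$ only when $L_1=L_2$ and $m$ even), and your trees coincide with the paper's ($L_2=r$, $m=d-r+1$, $L_1=n-2-d$, which is your $L_1=0$ family when $d=n-2$; and $L_1=L_2=n-2-r$, $m=2r-n+3$ for $\mathcal{B}_n$, where $m\le L_2$ is exactly $3r\le 2n-5$, which settles your mod-$6$ check directly). One small fix: for $n$ even and $d=r=(n-2)/2$ your third construction gives $L_1=L_2$ rather than $L_1<L_2$, but then $m=1$ is odd, so \Cref{cor:rad2_cover} still gives $d=B-1=r$; hence every point of $\mathcal{A}_n$ is realized and your closing remark about ``remaining diagonal points'' is not needed.
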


\begin{figure}[ht]
\centering
\begin{tikzpicture}[scale=0.65]

    \draw[->] (0,0) -- (8.5,0) node[right] {$\text{reg}$};
    \draw[->] (0,0) -- (0,8.5) node[above] {$\text{deg}$};

    \foreach \x in {1,2,3,4,5,6,7,8} {
        \draw (\x,-0.1) -- (\x,0.1);
        \node[below] at (\x,-0.2) {\x};
    }
    
    \foreach \y in {1,2,3,4,5,6,7,8} {
        \draw (-0.1,\y) -- (0.1,\y);
        \node[left] at (-0.1,\y) {\y};
    }

    \foreach \x/\y in {
        4/3,4/4,4/5,4/6,4/7,
        5/5,5/6,5/7,
        6/6,6/7,
        7/7
    }{
        \filldraw[teal] (\x,\y) circle (3pt);
    }

    \foreach \x/\y in {3/3,5/4,6/4,6/5,7/5,7/6}{
        \draw[blue, very thick, fill=white] (\x,\y) circle (3pt);
    }
\end{tikzpicture}
\caption{All possible $(\reg,\deg h)$ values for connected graphs on $n=9$ vertices.
Filled points are realized by radius--$\leq 2$ trees; hollow points are not.}
\label{fig:reg_deg_9_holes}
\end{figure}

\begin{proof}
Let $T$ be a tree on $n$ vertices of radius at most $2$ and use \Cref{not:radius2}.
If $L_2=0$, then $T\cong K_{1,n-1}$ and $(r,d)=(n-2,n-2)\in\mathcal{A}_n$.
Assume $L_2>0$. By \Cref{prop:radius2-pdim} we have $r=\max\{B-1,L_2\}$. In addition, recall that $n-2=(B-1)+L_2$. Hence
\[
\Bigl\lceil\frac{n-2}{2}\Bigr\rceil
=\Bigl\lceil\frac{(B-1)+L_2}{2}\Bigr\rceil
\le \max\{B-1,L_2\}=r
\le n-3.
\]
Moreover $d\le n-2$ always by the explicit formulas in \Cref{cor:rad2_cover}. We next show $d\le 2r-1$. Since $L_2=\sum_{i=1}^m t_i$ with each
$t_i\ge 1$, we have $m\le L_2$.

If $r=L_2$, then \Cref{cor:rad2_cover} gives $d\in\{n-2,\ m+L_2-1,\ r\}$ depending on the cases.
In each instance $d\le m+L_2-1\le 2L_2-1=2r-1$.

If $r=B-1$, then \Cref{cor:rad2_cover} gives $d\in\{r,\ r-1,\ m+L_2-1\}$ and
$m+L_2-1\le 2r-1$ because $m\le L_2\le r$.

Thus $(r,d)\in\mathcal{A}_n$ unless possibly $d=r-1$. By \Cref{cor:rad2_cover}, the inequality
$d<r$ occurs exactly in the case $L_1=L_2$ and $m$ even. In that situation,
since $n=1+m+2L_1$,  $n$ is odd. Furthermore $L_2\ge m$ yields the additional
constraint $r\le \lfloor(2n-5)/3\rfloor$ (equivalently $(r,r-1)\in\mathcal{B}_n$).
Hence $\mathcal{T}_n\subseteq \mathcal{A}_n$ when $n$ is even and
$\mathcal{T}_n\subseteq \mathcal{A}_n\cup\mathcal{B}_n$ when $n$ is odd.

\emph{Realizing $\mathcal{A}_n$.}
Let $(r,d)\in\mathcal{A}_n$.
If $r=n-2$ then $d=n-2$ and $K_{1,n-1}$ realizes $(n-2,n-2)$.
Assume $r\le n-3$ and define
\[
L_2:=r,\qquad
L_1:=n-2-d,\qquad
B:=n-1-r,\qquad
m:=B-L_1=d-r+1.
\]
Then $B\ge 2$ and $m\ge 1$ since $d\ge r$. Also $d\le 2r-1$ implies $t_1:=2r-d\ge 1$.
Set
\[
t_1:=2r-d,\qquad t_2=\cdots=t_m:=1,
\]
so $\sum_{i=1}^m t_i=r=L_2$. Construct a radius--$2$ tree with center $c$ adjacent to
$L_1$ leaves and to $m$ vertices $v_1,\dots,v_m$, where $v_i$ has exactly $t_i$ leaf neighbors.
Then $n=1+B+L_2$ holds, and \Cref{cor:rad2_cover} gives
\[
\reg(R_T/J(T))=\max\{B-1,L_2\}=\max\{n-r-2,r\}=r,
\]
since $r\ge \lceil (n-2)/2\rceil$ implies $n-r-2\le r$. Finally, by \Cref{cor:rad2_cover},
the same tree satisfies $\deg h_{R_T/J(T)}(t)=d$ (in the relevant subcase determined by $L_1$ and $L_2$).
Thus $\mathcal{A}_n\subseteq \mathcal{T}_n$.

\emph{Realizing $\mathcal{B}_n$.}
Let $(r,r-1)\in\mathcal{B}_n$ and assume $n$ is odd. Define
\[
L_2:=n-2-r,\qquad
L_1:=L_2,\qquad
B:=r+1,\qquad
m:=B-L_1=2r-n+3.
\]
Then $m\ge 1$ and, since $r\le\lfloor(2n-5)/3\rfloor$, we have $L_2\ge m$.
Choose
\[
t_1:=L_2-m+1\ge 1,\qquad t_2=\cdots=t_m:=1,
\]
so $\sum_i t_i=L_2$. Construct the corresponding radius--$2$ tree. Since $n=1+m+2L_1$ is odd,
the integer $m$ is even, so \Cref{cor:rad2_cover} yields
\[
\deg h_{R_T/J(T)}(t)=B-2=r-1,
\qquad
\reg(R_T/J(T))=\max\{B-1,L_2\}=r.
\]
Thus $\mathcal{B}_n\subseteq\mathcal{T}_n$, completing the proof.
\end{proof}

\section{A Jacques–Katzman type recursion for $M(G)$ on forests}
\label{sec:deg-h-recursion-forests}

In this section, we develop an analogue of Jacques--Katzman's recursion
\cite[Theorem~4.8]{jacques2005betti} to compute the degree of the $h$-polynomial
associated to the edge (and cover) ideal of a forest. It follows from \Cref{lem:deg-via-ord} that
\[
\deg h_{R_F/I(F)}(t)=\alpha(F)-M(F).
\]
Thus it suffices to compute $\alpha(F)$ and $M(F)$ recursively.

\begin{notation}\label{not:M-and-leading}
Let $G$ be a finite simple graph with independence polynomial $P_G(x)$.
Set $u=x+1$ and consider $P_G(u-1)\in \Bbbk[u]$. Then \(M(G)=\ord_{u=0}P_G(u-1)\).

We also record the first nonzero Taylor coefficient at $u=0$ by
\[
c(G):=[u^{M(G)}]P_G(u-1)\in\Bbbk^\times,
\]
so that
\[
P_G(u-1)=c(G)u^{M(G)}+\text{(terms of degree $>M(G)$)}.
\]
\end{notation}

\begin{lem}\label{lem:ord-sum}
Let $A(u),B(u)\in \Bbbk[u]$ be nonzero and set $\nu(A)=\ord_{u=0}A(u)$.
Then
\[
\nu(A+B)\ge \min\{\nu(A),\nu(B)\},
\]
with equality unless $\nu(A)=\nu(B)$ and the leading coefficients cancel.
More precisely, if $\nu(A)=a$ and $\nu(B)=b$, then:
\begin{enumerate}[i]
\item if $a<b$, then $\nu(A+B)=a$ and $[u^a](A+B)=[u^a]A$;
\item if $b<a$, then $\nu(A+B)=b$ and $[u^b](A+B)=[u^b]B$;
\item if $a=b$, then $\nu(A+B)\ge a$ and $[u^a](A+B)=[u^a]A+[u^a]B$.
\end{enumerate}
\end{lem}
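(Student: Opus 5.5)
The plan is to factor out the lowest power of $u$ from each polynomial and compare the remaining unit parts. Write $A(u)=u^{a}\tilde A(u)$ and $B(u)=u^{b}\tilde B(u)$, where $\tilde A(0)=[u^a]A\neq 0$ and $\tilde B(0)=[u^b]B\neq 0$. This is possible precisely because $a=\nu(A)$ and $b=\nu(B)$ are the orders of vanishing at $u=0$. Note also that $\nu$ of a nonzero polynomial is just the smallest exponent carrying a nonzero coefficient.

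First, for case (i), assume $a<b$. Then
\[
A+B=u^{a}\bigl(\tilde A(u)+u^{b-a}\tilde B(u)\bigr).
\]
The bracket evaluates at $u=0$ to $\tilde A(0)\neq 0$, since $b-a\ge 1$. Hence $\nu(A+B)=a$, and the coefficient of $u^a$ is $\tilde A(0)=[u^a]A$. Case (ii) is the same argument with the roles of $A$ and $B$ swapped.

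Next, for case (iii), assume $a=b$. Then
\[
A+B=u^{a}\bigl(\tilde A(u)+\tilde B(u)\bigr),
\]
so every monomial of $A+B$ has degree at least $a$. This gives $\nu(A+B)\ge a$, where if $A+B=0$ we read the order as $+\infty$. Reading off the constant term of the bracket gives $[u^a](A+B)=[u^a]A+[u^a]B$.

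The general inequality $\nu(A+B)\ge\min\{\nu(A),\nu(B)\}$ then follows by combining the three cases. The equality clause follows as well: equality can fail only in case (iii), and there only when $\tilde A(0)+\tilde B(0)=0$, that is, when the leading coefficients cancel.

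There is no real obstacle here. The only point needing a remark is that $A+B$ may be the zero polynomial in case (iii), which the convention $\nu(0)=+\infty$ handles. Coefficient-wise linearity of $[u^k]$ does all the work.
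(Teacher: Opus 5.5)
Your proof is correct and follows essentially the same route as the paper. The paper writes $A=a_0u^a+\cdots$ where you factor $A=u^a\tilde A$, and both arguments observe that the lowest-order term can only be cancelled when $a=b$ and the leading coefficients sum to zero; your remark that the convention $\nu(0)=+\infty$ covers the case $A+B=0$ is a small point the paper leaves implicit.
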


\begin{proof}
Write $A(u)=a_0u^a+\text{(terms of degree $>a$)}$ and
$B(u)=b_0u^b+\text{(terms of degree $>b$)}$ with $a_0,b_0\neq 0$.
If $a<b$ (resp.\ $b<a$), then the term $a_0u^a$ (resp.\ $b_0u^b$) cannot be cancelled by any term
of the other polynomial, so $\nu(A+B)=\min\{a,b\}$ and the leading coefficient is inherited from the
smaller-order summand. If $a=b$, then the coefficient of $u^a$ in $A+B$ is $a_0+b_0$, which is nonzero
exactly when no cancellation occurs.
\end{proof}

\begin{remark}
If $H=G\sqcup K_1$ is obtained from $G$ by adding an isolated vertex, then
$P_H(x)=(1+x)P_G(x)$. Hence $M(H)=M(G)+1$ and $\alpha(H)=\alpha(G)+1$, so
\[
\alpha(H)-M(H)=\alpha(G)-M(G),
\]
and consequently $\deg h_{S_H/I(H)}(t)=\deg h_{S_G/I(G)}(t)$.
In particular, isolated vertices may be ignored when computing $\deg h$ for forests.
\end{remark}

We follow the general setup of Jacques and Katzman \cite{jacques2005betti}, modifying it slightly to suit our purposes.

\begin{notation}\label{not:forest_recursive_deg}
Let $F$ be a forest containing a vertex $v$ of degree $n\ge 2$ with neighbor set
\[
N_F(v)=\{v_1,\dots,v_n\},
\]
where $v_1,\dots,v_{n-1}$ are leaves.
Let $F\setminus U$ denote the induced subgraph on $V(F)\setminus U$, and define
the induced subforests
\[
F'  := F\setminus\{v,v_1,\dots,v_{n-1}\},
\qquad
F'' := F\setminus\{v,v_1,\dots,v_n\}.
\]
\end{notation}

\begin{prop}\label{prop:JK-decomposition}
Let $F$ be as in \Cref{not:forest_recursive_deg}. Then
\begin{equation}\label{eq:JK-local-decomp-x}
P_F(x)=(1+x)^{n-1}P_{F'}(x) + xP_{F''}(x).
\end{equation}
Equivalently, after substituting $x=u-1$, one has
\begin{equation}\label{eq:JK-local-decomp-u}
P_F(u-1)=u^{n-1}P_{F'}(u-1) + (u-1)P_{F''}(u-1).
\end{equation}
\end{prop}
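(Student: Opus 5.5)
The identity follows from partitioning the independent sets of $F$ according to whether they contain $v$, exactly as in the proof of \Cref{prop:PT}. Since $P_F(x)=\sum_S x^{|S|}$ over independent sets $S$ of $F$, it suffices to show that the independent sets not containing $v$ are enumerated by $(1+x)^{n-1}P_{F'}(x)$ and those containing $v$ by $xP_{F''}(x)$.

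First take the independent sets $S$ with $v\notin S$. The leaves $v_1,\dots,v_{n-1}$ are adjacent only to $v$, so once $v$ is excluded, any subset of these leaves can be added to an independent set of the remaining graph. That remaining graph is the induced subforest on $V(F)\setminus\{v,v_1,\dots,v_{n-1}\}$, namely $F'$; note that $v_n\in V(F')$. Hence $S\mapsto (S\cap\{v_1,\dots,v_{n-1}\},\ S\cap V(F'))$ is a bijection onto pairs consisting of an arbitrary subset of the $n-1$ leaves and an independent set of $F'$. The inverse map lands in independent sets of $F$ because $F'$ is induced and there are no edges between the leaves $v_1,\dots,v_{n-1}$ and $V(F')$. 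Summing $x^{|S|}$ therefore gives $(1+x)^{n-1}P_{F'}(x)$.

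Next take the independent sets $S$ with $v\in S$. Then $S$ avoids every neighbor $v_1,\dots,v_n$, so $S\setminus\{v\}$ is an independent set of the induced subforest $F''=F\setminus\{v,v_1,\dots,v_n\}$. Conversely, for any independent set $T$ of $F''$, the set $T\cup\{v\}$ is independent in $F$, since $N_F(v)\cap V(F'')=\emptyset$. This bijection raises size by one, so these sets contribute $xP_{F''}(x)$.

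Adding the two contributions gives \eqref{eq:JK-local-decomp-x}. Substituting $x=u-1$, so that $1+x=u$, yields \eqref{eq:JK-local-decomp-u}. There is no real obstacle here. The only points needing care are that $v_n$ is kept in $F'$, which is why the exponent is $n-1$ and not $n$, and that the subforests are induced, so independence is inherited in both directions. The forest hypothesis is not needed: the argument works for any graph with such a vertex $v$.
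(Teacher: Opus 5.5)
Your proof is correct and is the paper's argument: you partition the independent sets of $F$ by whether they contain $v$, the $n-1$ free leaves give $(1+x)^{n-1}P_{F'}(x)$, the sets containing $v$ give $xP_{F''}(x)$, and you then substitute $x=u-1$. Your extra remarks are also accurate: $v_n$ remains in $F'$, and the argument uses only the pendant leaves at $v$, not the forest hypothesis.
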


\begin{proof}
Partition the independent sets of $F$ into two disjoint cases.

If $v$ is not chosen, then each leaf $v_1,\dots,v_{n-1}$ may be chosen or not chosen independently,
contributing the factor $(1+x)^{n-1}$. The remaining choices form an independent
set in the induced subforest $F'$, contributing $P_{F'}(x)$.
Thus this case contributes $(1+x)^{n-1}P_{F'}(x)$.

If  $v$ is chosen, this contributes a factor of $x$ and forces that none of $v_1,\dots,v_n$ is chosen.
The remaining choices form an independent set in $F''$, contributing $P_{F''}(x)$.
Thus this case contributes $xP_{F''}(x)$.

\smallskip
Adding the two cases yields \eqref{eq:JK-local-decomp-x}. Substituting $x=u-1$
gives \eqref{eq:JK-local-decomp-u}.
\end{proof}

We next extract recursive formulas for $M(F)$ (and the associated leading coefficient $c(F)$)
from the local decomposition in \Cref{prop:JK-decomposition}.

\begin{thm}\label{thm:JK-recursion-M}
Let $F$ be as in \Cref{not:forest_recursive_deg}. Set
\[
r_1:=(n-1)+M(F'),
\qquad\text{and}\qquad
r_2:=M(F'').
\]
Then:
\begin{enumerate}[a]
\item If $r_1<r_2$, then $M(F)=r_1$ and $c(F)=c(F')$.
\item If $r_2<r_1$, then $M(F)=r_2$ and $c(F)=-c(F'')$.
\item If $r_1=r_2=r$, then $M(F)\ge r$ and
\[
[u^{r}]P_F(u-1)=c(F')-c(F'').
\]
In particular, $M(F)=r$ if and only if $c(F')\neq c(F'')$, while $M(F)\ge r+1$
if and only if $c(F')=c(F'')$.
\end{enumerate}
\end{thm}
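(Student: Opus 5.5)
The plan is to read everything off the local decomposition \eqref{eq:JK-local-decomp-u} of \Cref{prop:JK-decomposition} and feed the two summands into \Cref{lem:ord-sum}. I will write
\[
A(u):=u^{n-1}P_{F'}(u-1),\qquad B(u):=(u-1)P_{F''}(u-1),
\]
so that $P_F(u-1)=A(u)+B(u)$. Both are nonzero polynomials: independence polynomials have constant term $1$, and $F'$, $F''$ may be empty graphs with $P=1$. If they are empty, $M=0$ and $c=1$, which is consistent with \Cref{not:M-and-leading}.

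The first step is to compute the order and lowest coefficient of each summand at $u=0$. For $A$, \Cref{not:M-and-leading} gives
\[
P_{F'}(u-1)=c(F')u^{M(F')}+\cdots,
\]
so $\nu(A)=(n-1)+M(F')=r_1$ with lowest coefficient $c(F')$. For $B$, the factor $u-1$ has nonzero constant term $-1$, so it does not change the order. Thus $\nu(B)=M(F'')=r_2$, and the lowest coefficient is $(-1)\cdot c(F'')=-c(F'')$. This sign is the only point that needs care: it is why (b) reads $c(F)=-c(F'')$ and (c) involves the difference $c(F')-c(F'')$ rather than a sum.

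The second step applies \Cref{lem:ord-sum}. In case $r_1<r_2$, part (i) gives $M(F)=r_1$ and $[u^{r_1}]P_F(u-1)=c(F')$, which is (a). In case $r_2<r_1$, part (ii) gives $M(F)=r_2$ with coefficient $-c(F'')$, which is (b). In case $r_1=r_2=r$, part (iii) gives $M(F)\ge r$ and
\[
[u^r]P_F(u-1)=c(F')-c(F'').
\]
Hence $M(F)=r$ exactly when this coefficient is nonzero, and $M(F)\ge r+1$ exactly when $c(F')=c(F'')$, which is (c).

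There is no real obstacle. I only need to check two things. First, $P_F(u-1)$ is not identically zero, which holds since $P_F(0)=1$, so $M(F)$ is finite. Second, the multiplicity of $-1$ as a root of $P_F(x)$ equals $\ord_{u=0}P_F(u-1)$, which is immediate from the substitution $u=x+1$.
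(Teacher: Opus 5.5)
Your proposal is correct and follows essentially the same route as the paper: split $P_F(u-1)$ via \eqref{eq:JK-local-decomp-u}, read off the orders $r_1,r_2$ and lowest coefficients $c(F')$ and $-c(F'')$ of the two summands (the sign coming from the constant term of $u-1$), and apply \Cref{lem:ord-sum}. Your extra checks, that $P_F(u-1)\neq 0$ and that an empty $F'$ or $F''$ gives $M=0$ and $c=1$, are harmless additions.
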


\begin{proof}
By \Cref{eq:JK-local-decomp-u},
\[
P_F(u-1)=u^{n-1}P_{F'}(u-1) + (u-1)P_{F''}(u-1).
\]
By \Cref{not:M-and-leading}, the Taylor expansions at $u=0$ begin as
\begin{align*}
P_{F'}(u-1)  &= c(F')u^{M(F')}+\text{(terms of degree $>M(F')$)},\\
P_{F''}(u-1) &= c(F'')u^{M(F'')}+\text{(terms of degree $>M(F'')$)},
\end{align*}
with $c(F'),c(F'')\neq 0$.

Multiplying the first expansion by $u^{n-1}$ shifts degrees up by $n-1$, hence
\[
u^{n-1}P_{F'}(u-1)=c(F')u^{(n-1)+M(F')}+\cdots,
\]
so
\[
\ord_{u=0}\!\big(u^{n-1}P_{F'}(u-1)\big)=r_1
\quad\text{and}\quad
[u^{r_1}]\!\big(u^{n-1}P_{F'}(u-1)\big)=c(F').
\]

For the second summand, write $(u-1)=-1+u$. Since $M(F'')\ge 0$, we have
\[
(u-1)P_{F''}(u-1)=-c(F'')u^{M(F'')}+\cdots,
\]
so
\[
\ord_{u=0}\!\big((u-1)P_{F''}(u-1)\big)=r_2
\quad\text{and}\quad
[u^{r_2}]\!\big((u-1)P_{F''}(u-1)\big)=-c(F'').
\]

Comparing the lowest nonzero powers of $u$ in the sum
\[
P_F(u-1)=\big(u^{n-1}P_{F'}(u-1)\big)+\big((u-1)P_{F''}(u-1)\big)
\]
and using \Cref{lem:ord-sum} gives (a)--(c).
\end{proof}

\begin{remark}\label{rem:MK-min}
Apart from the equal-order case where cancellation may occur, \Cref{thm:JK-recursion-M} yields
\[
M(F)=\min\{ (n-1)+M(F'),\ M(F'')\}.
\]
\end{remark}

\begin{cor}\label{cor:JK-recursion-alpha}
Let $F$ be as in \Cref{not:forest_recursive_deg}. Then
\[
\alpha(F)=\max\{ (n-1)+\alpha(F'),\ 1+\alpha(F'')\}.
\]
\end{cor}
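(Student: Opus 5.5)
The plan is to argue directly on independent sets, using the same partition that underlies \Cref{prop:JK-decomposition}: split according to whether the vertex $v$ belongs to the independent set. Since $\alpha(F)$ is the maximum size of an independent set, it suffices to determine the largest independent set in each of the two cases and then take the larger value.

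In the first case, $v$ is not chosen. Then the leaves $v_1,\dots,v_{n-1}$ have no neighbors among the remaining choices other than $v$, so all of them can be added freely. The rest of the set is an arbitrary independent set of $F'=F\setminus\{v,v_1,\dots,v_{n-1}\}$. The maximum size in this case is therefore $(n-1)+\alpha(F')$, and it is attained by taking all $n-1$ leaves together with a maximum independent set of $F'$.

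In the second case, $v$ is chosen. This excludes all of $v_1,\dots,v_n$, and the rest of the set is an arbitrary independent set of $F''$. The maximum size is then $1+\alpha(F'')$.

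Every independent set falls into exactly one of these two cases, and each bound is attained. Hence $\alpha(F)$ equals the maximum of the two values, which is the claimed formula.

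An alternative route is to read the same result off the degrees in \eqref{eq:JK-local-decomp-x}. There one uses that $P_{F'}$ and $P_{F''}$ have positive leading coefficients, so the two summands cannot cancel at top degree. This gives $\deg P_F=\max\{(n-1)+\deg P_{F'},\,1+\deg P_{F''}\}$.

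The only point needing care is the degenerate case where $F'$ or $F''$ is empty. There we take the convention $\alpha(\emptyset)=0$ and $P_\emptyset=1$, and the argument goes through unchanged. I do not expect any real obstacle.
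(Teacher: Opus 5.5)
Your proof is correct and follows the same route as the paper: split on whether $v$ lies in the independent set, giving the $n-1$ free leaves plus a maximum independent set of $F'$ in one case, and $v$ plus a maximum independent set of $F''$ in the other. Your remark that every independent set falls into exactly one case also supplies the upper bound, which the paper's write-up leaves implicit.
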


\begin{proof}
If $v$ is not chosen, then we may choose all $n-1$ leaf neighbors
$v_1,\dots,v_{n-1}$ and extend by a maximum independent set of $F'$,
producing an independent set of size $(n-1)+\alpha(F')$.

If $v$ is chosen, then none of $v_1,\dots,v_n$ may be chosen, and the remaining
choices form an independent set in $F''$. Extending by a maximum independent set
in $F''$ produces an independent set of size $1+\alpha(F'')$.

Taking the maximum of these two quantities yields the formula.
\end{proof}

\begin{cor}\label{cor:JK-recursion-deg-h}
Let $F$ be a forest as in \Cref{not:forest_recursive_deg}. Then
\[
\deg h_{R_F/I(F)}(t)=\alpha(F)-M(F),
\]
where $\alpha(F)$ and $M(F)$ may be computed recursively from
\Cref{cor:JK-recursion-alpha} and \Cref{thm:JK-recursion-M}.
\end{cor}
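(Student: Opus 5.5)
The statement to prove is \Cref{cor:JK-recursion-deg-h}. It is essentially an assembly of results already established, so the plan is short.

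\textbf{Step 1: the degree formula.} A forest $F$ is a finite simple graph, so \Cref{lem:deg-via-ord} applies directly and gives
\[
\deg h_{R_F/I(F)}(t)=\alpha(F)-M(F).
\]
No hypothesis beyond $F$ being a graph is needed here. The proof of \Cref{lem:deg-via-ord} only uses the identity $h_{R/I(G)}(t)=(1-t)^{\alpha}P_G\bigl(t/(1-t)\bigr)$ from \Cref{rem:h-from-independence}.

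\textbf{Step 2: the recursion.} We need that $\alpha(F)$ and $M(F)$ are determined recursively through $F'$ and $F''$.
\begin{itemize}
\item For $\alpha(F)$, invoke \Cref{cor:JK-recursion-alpha}.
\item For $M(F)$, invoke \Cref{thm:JK-recursion-M}. This computes $M(F)$, together with the leading coefficient $c(F)$, from the pairs $(M(F'),c(F'))$ and $(M(F''),c(F''))$.
\end{itemize}
In the tie case $r_1=r_2$ with $c(F')=c(F'')$, the theorem only gives the lower bound $M(F)\ge r+1$. So the recursion should be understood as an effective reduction. Either one reads off $M(F)$ and $c(F)$ directly, or one falls back to the explicit identity \eqref{eq:JK-local-decomp-u}, namely
\[
P_F(u-1)=u^{n-1}P_{F'}(u-1)+(u-1)P_{F''}(u-1),
\]
which expresses $P_F$ through the smaller forests. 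Higher Taylor coefficients can then be compared.

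\textbf{Step 3: termination.} Observe that $F'$ and $F''$ are induced subforests with strictly fewer vertices. If either of them still has a vertex of degree $\ge 2$, then \cite[Theorem 4.8]{jacques2005betti} supplies a vertex of the required form, as in \Cref{not:forest_recursive_deg}, and the process repeats. The base cases are forests of maximum degree $\le 1$, i.e.\ disjoint unions of $K_2$'s and isolated vertices. For these,
\[
P(x)=(1+2x)^a(1+x)^b,
\]
so $M=b$ and $\alpha=a+b$ are explicit. By the remark on isolated vertices, the base cases may also be simplified further.

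The only mild obstacle is the degenerate tie case in \Cref{thm:JK-recursion-M}(c). There, one must either carry higher-order Taylor data through the recursion or use the polynomial identity itself. I would state that the recursion is on the full polynomials $P_{F'}(u-1)$ and $P_{F''}(u-1)$ whenever cancellation occurs, which keeps the claim rigorous.
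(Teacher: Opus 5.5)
Your proposal is correct and follows the paper, which gives no separate argument: the corollary is just \Cref{lem:deg-via-ord} applied to $F$, combined with the recursions of \Cref{cor:JK-recursion-alpha} and \Cref{thm:JK-recursion-M}. Your extra care about termination, and about case (c) with $c(F')=c(F'')$, sharpens the paper's wording: that cancellation really occurs (e.g.\ for the radius-two tree with $L_1=L_2=m=2$ one gets $r_1=r_2=2$, $c(F')=c(F'')=-1$, and $M(F)=3$), so in that case you must indeed carry the full polynomials through \eqref{eq:JK-local-decomp-u}.
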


\section{Whiskering and degree formulas}
\label{sec:whiskering}

In this section we record formulas for the degree of the $h$-polynomial of edge and cover ideals under whiskering operations. We treat two versions: uniform $q$-whiskering, where $q$ pendant vertices are attached to every vertex of $G$, and whiskering at a single vertex. The uniform construction yields a closed expression in terms of $\alpha(G)$, whereas the one-vertex version is described in terms of the corresponding invariants of $G$ and $G-v$.

\begin{thm}\label{lem:q-whisker-indep-poly}
Let $G$ be a graph on $n$ vertices, and fix an integer $q\ge 1$.
Let $G_q$ be the graph obtained from $G$ by attaching $q$ whiskers to each
vertex of $G$ and $R_q=\Bbbk[v : v\in G_q]$. Then
\[
\deg h_{R_q/I(G_q)} (t)= q\alpha(G)
\qquad \text{ and } \qquad
\deg h_{R_q/J(G_q)} (t)= n-2 + q\alpha(G).
\]
\end{thm}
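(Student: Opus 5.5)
Our approach is to compute $P_{G_q}(x)$ in closed form and then read off $\alpha(G_q)$ and $M(G_q)$, so that \Cref{lem:deg-via-ord} gives both degrees. We partition the independent sets of $G_q$ according to $S=W\cap V(G)$, which is an independent set of $G$. Once $S$ is fixed, the whiskers attached to vertices of $S$ are forbidden. The $q$ whiskers at each of the $n-|S|$ remaining vertices of $G$ can be chosen freely. This gives
\[
P_{G_q}(x)=\sum_{i=0}^{\alpha(G)} g_i\, x^i (1+x)^{q(n-i)}
=(1+x)^{qn}\,P_G\!\left(\frac{x}{(1+x)^q}\right),
\]
where the $g_i$ are the independence numbers of $G$.

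Next we compute $\alpha(G_q)$. Taking all $qn$ whiskers gives an independent set of size $qn$. Each summand in the formula above has degree $i+q(n-i)\le qn$, since $q\ge1$. So $\alpha(G_q)=qn$ (the $i=0$ term alone reaches this degree when $q>1$; when $q=1$ every summand has degree $n$, but the set of all whiskers still shows $\alpha(G_1)=n$, and equivalently the top coefficient $\sum_i g_i=P_G(1)>0$ does not vanish). The number of vertices of $G_q$ is $n+qn$.

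The key step is $M(G_q)$. Setting $u=1+x$, each summand equals $g_i(u-1)^i u^{q(n-i)}$. The $u$-adic order of this summand is exactly $q(n-i)$, because $(u-1)^i$ takes the nonzero value $(-1)^i$ at $u=0$. Since $q\ge1$, these orders are distinct for distinct $i$. The minimum, $q(n-\alpha(G))$, is attained only at $i=\alpha(G)$, with leading coefficient $g_{\alpha}(-1)^{\alpha}\ne0$. By \Cref{lem:ord-sum}, $M(G_q)=q(n-\alpha(G))$. This is the only place where cancellation could threaten the argument, and the strict separation of the orders rules it out.

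Finally, \Cref{lem:deg-via-ord} gives
\[
\deg h_{R_q/I(G_q)}(t)=qn-q(n-\alpha(G))=q\alpha(G).
\]
For the cover ideal, $G_q$ has $N=n(q+1)$ vertices and at least one edge, so
\[
\deg h_{R_q/J(G_q)}(t)=N-2-M(G_q)=n+qn-2-qn+q\alpha(G)=n-2+q\alpha(G).
\]
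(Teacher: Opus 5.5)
Your proof is correct and follows the paper's argument. Both obtain $P_{G_q}(x)=\sum_i g_i x^i(1+x)^{q(n-i)}$ and note that after factoring out $(1+x)^{q(n-\alpha(G))}$ only the $i=\alpha(G)$ term survives at $x=-1$ (your distinct-$u$-adic-orders argument is the same observation). This gives $M(G_q)=q(n-\alpha(G))$ and, with $\alpha(G_q)=qn$, both degree formulas. The only difference is that you derive the independence polynomial by directly counting over $W\cap V(G)$, whereas the paper cites the Levit--Mandrescu corona identity $P_{G\circ H}(x)=P_H(x)^n P_G\bigl(x/P_H(x)\bigr)$ with $H=qK_1$.
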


\begin{proof}
Recall from \Cref{thm:deg_edge_cover} and  \Cref{lem:deg-via-ord} that, for any graph $H$, one has 
\[
    \deg h_{R/I(H)}(t)=\alpha(H)-M(H) \qquad\text{ and }\qquad
    \deg h_{R/J(H)}(t)=|V(H)|-2-M(H).
\]
So, it suffices find  $M(G_q)$. For this purpose, we focus on the independence polynomial of $G_q$. 

Observe that $G_q$ is the corona product of $G$ and $qK_1$, i.e., $G_q= G \circ qK_1$.  Set $H=qK_1$, so $P_H(x)=(1+x)^q$. Then by the following corona identity from \cite[Theorem~1.2]{levit2003roots} 
\[
P_{G\circ H}(x)=\bigl(P_H(x)\bigr)^{n}P_G\!\left(\frac{x}{P_H(x)}\right),
\]
we have
\[
P_{G_q}(x)=(1+x)^{qn}P_G\!\left(\frac{x}{(1+x)^q}\right) =\sum_{k=0}^{\alpha(G)} g_kx^k(1+x)^{q(n-k)}.
\]
Let $\alpha=\alpha(G)$ and factor the independence polynomial as follows:
\[
P_{G_q}(x)=(1+x)^{q(n-\alpha)}
\Bigg(\sum_{k=0}^{\alpha} g_kx^k(1+x)^{q(\alpha-k)} \Bigg).
\]
Evaluating the remaining factor at $x=-1$, notice that every term with $k<\alpha$
vanishes, leaving only $g_{\alpha}(-1)^{\alpha}\neq 0$.
Thus  $M(G_q)=q(n-\alpha(G))$.

Since $\alpha(G_q)=qn$, we have  
\begin{align*}
    \deg h_{R_q/I(G_q)}(t)&=qn-q(n-\alpha(G))=q\alpha(G),\\
\deg h_{R_q/J(G_q)}(t)&=(q+1)n-2-q(n-\alpha(G))=n-2+q\alpha(G),
\end{align*}
as claimed.
\end{proof}

Whiskering one vertex still yields a useful expression for $\deg h$, but it depends on $G$ and $G\setminus v$ with $v$ the whiskered vertex.

\begin{lem}\label{lem:one-vertex-whisker-hdeg}
Let $G$ be a graph on $n$ vertices and fix a vertex $v\in V(G)$.
Let $G^{v}$ be the graph obtained from $G$ by whiskering $v$.  Set $a=M(G)$ and $b=M(G-v)$.
Then
\[
M(G^{v})=\min\{a,b\}\quad\text{if }a\neq b,
\]
and if $a=b$ then $M(G^{v})\ge a$, with equality unless a cancellation occurs.

Moreover, $\alpha(G^{v})=\max\{\alpha(G),1+\alpha(G-v)\}$.
\end{lem}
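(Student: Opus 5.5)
The plan is to derive a two-term decomposition of $P_{G^v}(x)$ of the same shape as \Cref{prop:JK-decomposition}, and then read off the order of vanishing at $x=-1$ using \Cref{lem:ord-sum}. Let $w$ be the new pendant vertex adjacent only to $v$. I will partition the independent sets of $G^v$ according to whether $w$ is chosen.

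If $w$ is not chosen, the set is simply an independent set of $G$. If $w$ is chosen, then $v$ is excluded, and the rest is an independent set of $G-v$. This gives
\[
P_{G^v}(x)=P_G(x)+x\,P_{G-v}(x).
\]
Substituting $x=u-1$ as in \Cref{not:M-and-leading}, we obtain
\[
P_{G^v}(u-1)=P_G(u-1)+(u-1)P_{G-v}(u-1).
\]
The first summand has order $a=M(G)$ at $u=0$, with leading coefficient $c(G)$. Since $(u-1)=-1+u$ has nonzero constant term, the second summand has order exactly $b=M(G-v)$, with leading coefficient $-c(G-v)$.

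Now apply \Cref{lem:ord-sum}. If $a\neq b$, parts (i)--(ii) give $M(G^v)=\min\{a,b\}$. If $a=b$, part (iii) gives $M(G^v)\ge a$, and the coefficient of $u^a$ is $c(G)-c(G-v)$. So equality holds unless $c(G)=c(G-v)$, which is precisely the cancellation referred to in the statement. I will record this explicit criterion in the proof, in parallel with \Cref{thm:JK-recursion-M}(c).

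For $\alpha(G^v)$, I argue exactly as in \Cref{cor:JK-recursion-alpha}:
\begin{itemize}
\item a maximum independent set avoiding $w$ has size $\alpha(G)$;
\item one containing $w$ has size $1+\alpha(G-v)$.
\end{itemize}
Taking the maximum of the two gives the formula. Alternatively, it can be read off as the degree of $P_G(x)+xP_{G-v}(x)$, since both summands have positive leading coefficients and no cancellation occurs.

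The only delicate point is the equal-order case, and it is handled entirely by the leading-coefficient bookkeeping above. I expect no genuine obstacle.
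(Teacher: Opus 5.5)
Your proposal is correct and follows essentially the same route as the paper: the decomposition $P_{G^v}(x)=P_G(x)+xP_{G-v}(x)$, the order comparison at $x=-1$, and the same two-case count for $\alpha(G^v)$. Your cancellation criterion $c(G)=c(G-v)$ is exactly the paper's condition $A(-1)=B(-1)$, phrased in the variable $u=x+1$ via \Cref{lem:ord-sum} instead of by factoring out $(1+x)^{\min\{a,b\}}$ directly.
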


\begin{proof}
Let $v'$ be the new vertex where $\{v,v'\} \in E(G^{v})$. 

If we consider a maximum independent set of $G^v$, we can take $v'$ plus any independent set of $G-v$,
giving maximum size $1+\alpha(G-v)$. If we do not include $v'$, we can take a maximum
independent set of $G$, giving size $\alpha(G)$. Hence, we have $\alpha(G^{v})=\max\{\alpha(G),1+\alpha(G-v)\}$.

Any independent set in $G^{v}$ either does not use the new leaf $v'$
(contributing $P_G(x)$), or it uses $v'$, in which case
the remaining vertices form an independent set in $G-v$, contributing
$xP_{G-v}(x)$. Summing the two results yields 
\[
P_{G^{v}}(x)=P_G(x)+xP_{G-v}(x).
\]

Since $a=M(G)$ and $b=M(G-v)$, one has
\[
P_G(x)=(1+x)^a A(x),\qquad P_{G-v}(x)=(1+x)^b B(x),
\]
where $A(-1)\neq 0$ and $B(-1)\neq 0$.
Then
\[
P_{G^{v}}(x)=(1+x)^aA(x)+x(1+x)^bB(x).
\]
If $a<b$, factoring out $(1+x)^a$:
\[
P_{G^{v}}(x)=(1+x)^a\bigl(A(x)+x(1+x)^{b-a}B(x)\bigr),
\]
and evaluating the second factor at $x=-1$ kills the second term, leaving $A(-1)\neq 0$. Thus $M(G^{v})=a$.
The case $b<a$ is symmetric and gives $M(G^{v})=b$. So, if $a\neq b$,
$M(G^{v})=\min\{a,b\}$.

If $a=b$, then
\[
P_{G^{v}}(x)=(1+x)^a\bigl(A(x)+xB(x)\bigr).
\]
So, we have $M(G^{v})\ge a$. Notice that $M(G^{v}) =a$  unless $A(-1)=B(-1)$, in which case the multiplicity is larger.
\end{proof}

\begin{cor}\label{thm:one_whisker}
    Let $G$ be a graph on $n$ vertices and fix a vertex $v\in V(G)$.
Let $G^{v}$ be the graph obtained from $G$ by whiskering $v$.  Then $\deg h_{R^{v}/I(G^{v})}(t)$ and $\deg h_{R^{v}/J(G^{v})}(t)$ are given in terms of $G$ and $G\setminus v$.
\end{cor}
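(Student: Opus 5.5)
The plan is to combine \Cref{lem:one-vertex-whisker-hdeg} with the degree formulas of \Cref{lem:deg-via-ord}, applied to the graph $G^{v}$. First I will record the vertex count and the ambient ring: $G^{v}$ has $n+1$ vertices and $R^{v}=\Bbbk[x_u : u\in V(G^{v})]$. Since $\{v,v'\}$ is an edge, $G^{v}$ has at least one edge, so $\dim R^{v}/J(G^{v})=n-1$. Then \Cref{lem:deg-via-ord} gives
\[
\deg h_{R^{v}/I(G^{v})}(t)=\alpha(G^{v})-M(G^{v}),\qquad
\deg h_{R^{v}/J(G^{v})}(t)=(n+1)-2-M(G^{v})=n-1-M(G^{v}).
\]

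Next I will substitute the two ingredients from \Cref{lem:one-vertex-whisker-hdeg}:
\[
\alpha(G^{v})=\max\{\alpha(G),1+\alpha(G-v)\},
\]
and $M(G^{v})$ is described in terms of $a=M(G)$ and $b=M(G-v)$. Writing $P_G=(1+x)^aA$ and $P_{G-v}=(1+x)^bB$ with $A(-1),B(-1)\neq 0$, there are two cases.

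When $a\neq b$, we have $M(G^{v})=\min\{a,b\}$. This yields the explicit formulas
\[
\deg h_{R^{v}/I(G^{v})}(t)=\max\{\alpha(G),1+\alpha(G-v)\}-\min\{a,b\}
\]
and
\[
\deg h_{R^{v}/J(G^{v})}(t)=n-1-\min\{a,b\}.
\]

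The only real obstacle is the equal case $a=b$, where cancellation can occur. There I will make the dependence on $G$ and $G-v$ fully explicit rather than leaving an inequality. From the proof of \Cref{lem:one-vertex-whisker-hdeg},
\[
P_{G^{v}}(x)=(1+x)^a\bigl(A(x)+xB(x)\bigr),
\]
so $M(G^{v})=a+\ord_{x=-1}\bigl(A(x)+xB(x)\bigr)$. Here $A=P_G/(1+x)^a$ and $B=P_{G-v}/(1+x)^b$ are determined by $G$ and $G-v$. Equivalently, in \Cref{not:M-and-leading} notation, $M(G^{v})=a$ exactly when $c(G)\neq c(G-v)$, since the leading coefficient of $xB$ at $u=0$ is $-c(G-v)$. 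Either form expresses $M(G^{v})$, and hence both degrees, purely in terms of invariants of $G$ and $G-v$. Substituting into the displayed formulas from the first paragraph then completes the proof.
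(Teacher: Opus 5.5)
Your proposal is correct and is essentially the paper's own argument: substitute $|V(G^{v})|=n+1$, $\alpha(G^{v})=\max\{\alpha(G),1+\alpha(G-v)\}$ and the value of $M(G^{v})$ from \Cref{lem:one-vertex-whisker-hdeg} into the two formulas of \Cref{lem:deg-via-ord}. Your treatment of the case $a=b$ agrees with the paper's remark that cancellation happens exactly when $A(-1)=B(-1)$, i.e.\ $c(G)=c(G-v)$. Note, though, that this criterion only decides whether $M(G^{v})=a$; in the cancellation case it is your expression $a+\ord_{x=-1}\bigl(A(x)+xB(x)\bigr)$, not the comparison of $c(G)$ and $c(G-v)$, that actually determines $M(G^{v})$.
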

\begin{proof}
    The degree formulas are obtained by substituting $\alpha(G^{v}), |V(G^{v})|=n+1$ and $M(G^v)$  from  Lemma~ \ref{lem:one-vertex-whisker-hdeg}
into $\deg h_{R/I(G^{v})}(t)=\alpha(G^{v})-M(G^{v})$ and
$\deg h_{R/J(G^{v})}(t)=|V(G^{v})|-2-M(G^{v})$.
\end{proof}

\section{Split graphs and diagonal $(\reg,\deg h)$ behavior}
\label{sec:split-graphs}

Split graphs form a well-behaved chordal class for which the independence polynomial admits an explicit description. In this section we exploit a partition of split graphs to compute $P_G(x)$. Hence, we  obtain closed formulas for the invariants $\alpha(G)$ and $M(G)$ that control the degrees of the $h$-polynomials of $R/I(G)$ and $R/J(G)$. As a consequence, cover ideals exhibit particularly rigid behavior in this class: for connected split graphs the pair $\bigl(\reg(R/J(G)),\deg h_{R/J(G)}(t)\bigr)$ always lies on the diagonal, and we determine exactly which diagonal values occur for fixed $n=|V(G)|$.

\begin{defn}
A graph $G$ is a \emph{split graph} if its vertex set admits a partition
\[
V(G)=C\sqcup I
\]
such that $C$ induces a clique and $I$ induces an independent set.
Fix such a partition for the statements below.
Throughout, we assume $G$ is connected and has at least one edge.
\end{defn}

\begin{remark}
It was shown in \cite{split_graphs} that split graphs are exactly $(2K_2,C_4,C_5)$-free. Notice that $2K_2$-free is equivalent to gap-free. 
\end{remark}

We adopt the following notation throughout this section.

\begin{notation}\label{not:split-params}
Let $G$ be a split graph. Fix a split partition  $V(G)=C\sqcup I$  and set $m:=|I|$.
For each $c\in C$, define
\begin{align*}
a(c) &:=\bigl|I\setminus N_G(c)\bigr|, \\
\deg_I(c) &:=\bigl|N_G(c)\cap I\bigr|.
\end{align*}
Thus $m=a(c)+\deg_I(c)$ for each $c\in C$.
Also set
\begin{align*}
    \Delta_I &:=\max \{\deg_I(c): c\in C\},\\
    \delta_I &:=\min \{ \deg_I(c) : c\in C\},\\
    a_{\min} &:=\min \{a(c): c\in C\},\\
    a_{\max}  &:=\max \{a(c): c\in C\}.
\end{align*}
Notice that  $a_{\min}=m-\Delta_I $ and $ a_{\max} =m-\delta_I$.
\end{notation}

\begin{prop}\label{prop:split-independence-polynomial}
Let $G$ be a  split graph  as in Notation~\ref{not:split-params}.
Then its independence polynomial is
\[
P_G(x)=(1+x)^m + x\sum_{c\in C}(1+x)^{a(c)}.
\]
\end{prop}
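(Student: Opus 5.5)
The plan is to partition the independent sets of $G$ by how many vertices they take from the clique $C$, as in the proofs of \Cref{prop:PT} and \Cref{prop:JK-decomposition}. Since $C$ induces a clique, any independent set $S$ of $G$ satisfies $|S\cap C|\le 1$. This gives a disjoint decomposition into two cases.

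In the first case, $S\cap C=\emptyset$, so $S\subseteq I$. Because $I$ is independent, every subset of $I$ is an independent set of $G$. Hence these sets contribute $\sum_{k}\binom{m}{k}x^k=(1+x)^m$ to $P_G(x)$.

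In the second case, $S\cap C=\{c\}$ for a unique $c\in C$. Then $S\setminus\{c\}$ must avoid $N_G(c)$. It lies in $I$, since it cannot contain other clique vertices, so it is contained in $I\setminus N_G(c)$. Conversely, every subset $T\subseteq I\setminus N_G(c)$ gives an independent set $T\cup\{c\}$. This uses two facts: $T$ is independent as a subset of $I$, and $c$ has no neighbors in $T$. So for fixed $c$ the contribution is $x(1+x)^{a(c)}$, and summing over $c\in C$ gives $x\sum_{c\in C}(1+x)^{a(c)}$.

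The two cases are disjoint and exhaustive, so adding them gives the stated formula. There is no real obstacle here. The only point needing care is the observation that $|S\cap C|\le 1$, together with the fact that the $c$-indexed families in the second case are pairwise disjoint, which holds because $c$ is determined by $S$.
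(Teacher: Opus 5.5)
Your proof is correct and matches the paper's proof: both split the independent sets into those contained in $I$, contributing $(1+x)^m$, and those meeting the clique $C$ in exactly one vertex $c$, each contributing $x(1+x)^{a(c)}$. Your version also spells out two points the paper leaves implicit: that every subset $T\subseteq I\setminus N_G(c)$ gives an independent set $T\cup\{c\}$, and that the families indexed by different $c$ are disjoint.
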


\begin{proof}
Every independent set is of exactly one of the following types:
\begin{itemize}
\item a subset of $I$, contributing $(1+x)^m$, or
\item it contains a vertex $c\in C$. Since $C$ is a clique, it contains exactly one
vertex of $C$. In addition, it may contain any subset of $I\setminus N_G(c)$.
This contributes $x(1+x)^{a(c)}$ for each $c\in C$.
\end{itemize}
Summing these disjoint contributions gives the formula for the independence polynomial.
\end{proof}

\begin{cor}\label{cor:split-alpha}
With notation as above,
\[
\alpha(G)=\max\Bigl(m,\ 1+a_{\max}\Bigr)
=\max\Bigl(m,\ 1+m-\delta_I\Bigr).
\]
\end{cor}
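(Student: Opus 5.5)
The plan is to read $\alpha(G)$ off as the degree of $P_G(x)$, using the explicit formula in \Cref{prop:split-independence-polynomial}. Since $\alpha(G)$ is the largest $i$ with $g_i\neq 0$, and every $g_i\ge 0$, it suffices to find the degree of
\[
P_G(x)=(1+x)^m + x\sum_{c\in C}(1+x)^{a(c)}.
\]
Every summand has nonnegative coefficients, so no cancellation can occur among leading terms. The degree of the sum is therefore the maximum of the degrees of the summands.

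First, $(1+x)^m$ has degree $m$ and leading coefficient $1$. Second, each term $x(1+x)^{a(c)}$ has degree $1+a(c)$ and leading coefficient $1$. Adding polynomials with nonnegative coefficients yields a polynomial whose coefficient of $x^k$ is positive as soon as one summand has a positive $x^k$ coefficient. Hence
\[
\deg P_G(x)=\max\{m,\ 1+\max_{c\in C}a(c)\}=\max\{m,\ 1+a_{\max}\}.
\]
This requires $C\neq\emptyset$, which holds because $G$ has at least one edge and $I$ is independent, so some edge meets $C$.

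The combinatorial interpretation gives a check. A maximum independent set is either all of $I$ (size $m$) or some $c\in C$ together with $I\setminus N_G(c)$ (size $1+a(c)$), since by the proof of \Cref{prop:split-independence-polynomial} every independent set contains at most one vertex of $C$.

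Finally, the second equality follows from $a_{\max}=m-\delta_I$, recorded in \Cref{not:split-params}. Since $\deg_I(c)=m-a(c)$, maximizing $a(c)$ is the same as minimizing $\deg_I(c)$.

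There is no real obstacle here. The only point needing care is the no-cancellation claim, and positivity of the coefficients settles it.
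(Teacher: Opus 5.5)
Your proof is correct and takes essentially the same route as the paper. The paper argues directly that an independent set is either a subset of $I$, or contains exactly one $c\in C$ together with a subset of $I\setminus N_G(c)$; your degree computation is the same case split in generating-function form, and your nonnegativity remark makes explicit that the sizes $m$ and $1+a_{\max}$ are actually attained, which the paper leaves implicit.
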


\begin{proof}
An independent set either lies in $I$ (giving size at most $m$), or it contains a vertex
$c\in C$, in which case it contains exactly one vertex of $C$ and then can include at most
all of $I\setminus N_G(c)$, giving size at most $1+a(c)$. Taking the maximum over $c\in C$
yields the desired formula.
\end{proof}

 As we see below, the invariant $M(G)$ has a nice description for split graphs.

\begin{prop}\label{prop:split-multiplicity}
Let $G$ be a  split graph  as in Notation~\ref{not:split-params}. Then
\[
M(G)=m-\Delta_I.
\]
\end{prop}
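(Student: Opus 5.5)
The plan is to work directly from the explicit formula of \Cref{prop:split-independence-polynomial},
\[
P_G(x)=(1+x)^m + x\sum_{c\in C}(1+x)^{a(c)},
\]
factor out the largest power of $(1+x)$ that visibly divides every term, and then check that the cofactor does not vanish at $x=-1$. Recall from \Cref{not:split-params} that $a_{\min}=m-\Delta_I$. Every exponent appearing above is at least $a_{\min}$, since $m\ge a_{\min}$ and $a(c)\ge a_{\min}$ for all $c\in C$. Hence
\[
P_G(x)=(1+x)^{a_{\min}}Q(x),\qquad Q(x)=(1+x)^{\Delta_I}+x\sum_{c\in C}(1+x)^{a(c)-a_{\min}}.
\]
This already gives $M(G)\ge m-\Delta_I$. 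It then remains to show $Q(-1)\neq 0$.

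Next I evaluate $Q(-1)$. In the sum, a term survives at $x=-1$ exactly when $a(c)=a_{\min}$, that is, when $\deg_I(c)=\Delta_I$. Let $N:=\bigl|\{c\in C:\deg_I(c)=\Delta_I\}\bigr|$, which is at least $1$ whenever $C\neq\emptyset$. The first term contributes $1$ if $\Delta_I=0$ and $0$ if $\Delta_I\ge 1$. Therefore
\[
Q(-1)=\begin{cases}-N, & \Delta_I\ge 1,\\ 1-N, & \Delta_I=0.\end{cases}
\]

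The only real point is ruling out cancellation, and this is where connectedness and the existence of an edge are used; I treat it as a short case split.

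If $\Delta_I\ge 1$, then $Q(-1)=-N\neq 0$, because $C$ is nonempty.

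If $\Delta_I=0$, then no vertex of $C$ has a neighbor in $I$. If also $I\neq\emptyset$ and $C\neq\emptyset$, then connectedness fails, and if $C=\emptyset$ there are no edges; so these situations cannot occur. The remaining possibility is $I=\emptyset$, in which case $G=K_{|C|}$. Then $a(c)=0=a_{\min}$ for every $c$, so $N=|C|$. Having an edge forces $|C|\ge 2$, hence $Q(-1)=1-|C|\neq 0$.

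In all cases $Q(-1)\neq 0$, so $M(G)=a_{\min}=m-\Delta_I$, as claimed.
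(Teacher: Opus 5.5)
Your proposal is correct and is essentially the paper's proof. You factor $(1+x)^{a_{\min}}$ out of the formula in \Cref{prop:split-independence-polynomial} and show the cofactor evaluates at $x=-1$ to minus the number of clique vertices attaining $\Delta_I$, treating the complete-graph case separately; the only difference is that you split on $\Delta_I=0$ versus $\Delta_I\ge 1$ where the paper splits on $m=0$ versus $m>0$, and connectedness makes these two splits equivalent.
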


\begin{proof}
If $m=0$, then $G$ is a complete graph on $C$ and $P_G(x)=1+|C|x$. This means  $P_G(-1)=1-|C|\neq 0$ and $M(G)=0=m-\Delta_I$.

Assume $m>0$. By Proposition~\ref{prop:split-independence-polynomial}, we know
\[
P_G(x)=(1+x)^m + x\sum_{c\in C}(1+x)^{a(c)}.
\]
Recall that $a_{\min}=m-\Delta_I$. Since $G$ is connected,
some vertex of $C$ has a neighbor in $I$. So, $\Delta_I\ge 1$ which implies that $a_{\min}<m$. Then, we can factor out $(1+x)^{a_{\min}}$ in $P_G(x)$ as follows:
\[
P_G(x)=(1+x)^{a_{\min}}
\Bigl((1+x)^{m-a_{\min}} + x\sum_{c\in C}(1+x)^{a(c)-a_{\min}}\Bigr).
\]
Next, we evaluate the second factor at $x=-1$. Since $m-a_{\min}>0$, the first term vanishes.
In the sum, every term with $a(c)>a_{\min}$ vanishes, while each term with $a(c)=a_{\min}$
contributes $x$. Let $r:=|\{c\in C : a(c)=a_{\min}\}|\ge 1$. Then the second factor in parenthesis evaluates to
\[
(-1)\cdot r\neq 0.
\]
Thus it is not divisible by $(1+x)$.  Therefore $M(G)=a_{\min}=m-\Delta_I$, as claimed.
\end{proof}

\begin{cor}\label{cor:split-deg-h}
Let $G$ be a  split graph  as in Notation~\ref{not:split-params}. Then
\[
\deg h_{R/I(G)}(t)=
\begin{cases}
\Delta_I+1 &: \text{ if } \delta_I=0,\\[4pt]
\Delta_I &: \text{ if } \delta_I>0.
\end{cases}
\]
\end{cor}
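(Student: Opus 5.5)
The plan is to combine \Cref{lem:deg-via-ord} with the closed formulas already obtained for split graphs. By \Cref{lem:deg-via-ord} we have $\deg h_{R/I(G)}(t)=\alpha(G)-M(G)$. \Cref{cor:split-alpha} gives
\[
\alpha(G)=\max\{m,\,1+m-\delta_I\},
\]
and \Cref{prop:split-multiplicity} gives $M(G)=m-\Delta_I$. The whole argument then reduces to subtracting these two expressions and separating cases according to which term attains the maximum.

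\textbf{Case $\delta_I=0$.} Then $1+m-\delta_I=m+1>m$, so $\alpha(G)=m+1$. Therefore
\[
\deg h_{R/I(G)}(t)=(m+1)-(m-\Delta_I)=\Delta_I+1.
\]

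\textbf{Case $\delta_I\ge 1$.} Then $1+m-\delta_I\le m$, so $\alpha(G)=m$. Therefore
\[
\deg h_{R/I(G)}(t)=m-(m-\Delta_I)=\Delta_I.
\]

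One degenerate situation needs a check: $m=0$, where $G$ is complete. In the proof of \Cref{prop:split-multiplicity} this case is handled separately, with $M(G)=0$. Here $\Delta_I=\delta_I=0$, so the statement predicts $\deg h=1$. Indeed $\alpha(G)=1$, and \Cref{cor:split-alpha} gives $\max\{0,1\}=1$, consistent with that prediction.

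There is no substantial obstacle. The only point requiring care is confirming that the earlier results apply under the standing hypotheses. Specifically, \Cref{prop:split-multiplicity} uses connectedness to guarantee $\Delta_I\ge 1$ when $m>0$. Also, $\delta_I$ and $\Delta_I$ must be computed with respect to the same fixed split partition $V(G)=C\sqcup I$ that is used in both \Cref{cor:split-alpha} and \Cref{prop:split-multiplicity}.
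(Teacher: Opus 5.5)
Your proof is correct and is the paper's argument: the paper also combines $\deg h_{R/I(G)}(t)=\alpha(G)-M(G)$ with Corollary~\ref{cor:split-alpha} and Proposition~\ref{prop:split-multiplicity}, and you spell out the case split on $\delta_I=0$ versus $\delta_I>0$ that it leaves implicit. Your check of the complete-graph case $m=0$, and your remark that both formulas use the same fixed split partition, are sound additions.
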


\begin{proof}
The result follows from combining Corollary~\ref{cor:split-alpha} and Proposition~\ref{prop:split-multiplicity} with the
identity $\deg h_{R/I(G)}(t)=\alpha(G)-M(G)$.
\end{proof}

\begin{lem}\label{rem:split-iG}
Let $G$ be a  split graph  as in Notation~\ref{not:split-params}. Then
\[
i(G)=m-\Delta_I+1.
\]
\end{lem}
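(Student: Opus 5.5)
We compute $i(G)$ as the minimum size of a maximal independent set, using the remark after \Cref{thm:pdim_chordal}. The plan is to classify all maximal independent sets of $G$ according to how they meet the clique $C$, in the spirit of the proof of \Cref{prop:split-independence-polynomial}, and then minimize over the resulting list.

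First, $C\neq\emptyset$: $G$ has an edge and $I$ is independent, so every edge has an endpoint in $C$. Let $S$ be a maximal independent set. Since $C$ is a clique, $|S\cap C|\le 1$, giving two cases.

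\emph{Case $S\cap C=\{c\}$.} Then $S\subseteq \{c\}\cup (I\setminus N_G(c))$. Since $I\setminus N_G(c)$ is independent and non-adjacent to $c$, maximality forces $S=\{c\}\cup(I\setminus N_G(c))$. Conversely, this set is always maximal. Every other vertex of $C$ is adjacent to $c$ because $C$ is a clique. Every vertex of $I\cap N_G(c)$ is adjacent to $c$ by definition. So such $S$ has size $1+a(c)$.

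\emph{Case $S\cap C=\emptyset$.} Then $S\subseteq I$, and since $I$ is independent, maximality forces $S=I$, of size $m$. This can occur only if every $c\in C$ has a neighbor in $I$. In any case it contributes a candidate value of at least $m$.

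Hence
\[
i(G)=\min\Bigl(\{1+a(c): c\in C\}\cup\{m \text{ if $I$ is maximal}\}\Bigr).
\]
Since $\min_c (1+a(c))=1+a_{\min}=1+m-\Delta_I$, it remains to check that $m\ge 1+m-\Delta_I$, i.e.\ $\Delta_I\ge 1$, whenever $m>0$. This holds because $G$ is connected, so some vertex of $I$ is joined to $C$, exactly as used in the proof of \Cref{prop:split-multiplicity}. If $m=0$, then $G=K_{|C|}$, so $i(G)=1=m-\Delta_I+1$ since $\Delta_I=0$.

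The only delicate point is the maximality bookkeeping in the two cases: confirming that every set $\{c\}\cup(I\setminus N_G(c))$ is genuinely maximal, and that the set $I$, when it is maximal, never undercuts the minimum. Both are short checks given connectivity.
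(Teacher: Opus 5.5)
Your proof is correct and follows essentially the same route as the paper's. The paper exhibits $\{c_0\}\cup(I\setminus N_G(c_0))$ for a vertex $c_0$ with $\deg_I(c_0)=\Delta_I$, and then bounds any independent dominating set containing some $c\in C$ below by $1+a(c)$. Your version is slightly more complete: you also handle the case where the set avoids $C$ entirely, so that it equals $I$ and has size $m\ge m-\Delta_I+1$ by connectivity, which the paper's argument leaves implicit.
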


\begin{proof}
 Choose $c_0\in C$ with $\deg_I(c_0)=\Delta_I$. The set
\[
D:=\{c_0\}\ \cup\ \bigl(I\setminus N_G(c_0)\bigr)
\]
is independent and dominating. So $i(G)\le |D|=1+a(c_0)=m-\Delta_I+1$.
Conversely, any independent dominating set contains at most one vertex of $C$. If  an independent dominating contains $c\in C$,
then it must also contain all of $I\setminus N_G(c)$, hence has size at least $1+a(c)\ge 1+a_{\min}=m-\Delta_I+1$.
\end{proof}

\begin{cor}\label{cor:split_cover}
    Let $G$ be a  split graph   as in Notation~\ref{not:split-params}. Then 
    \[
    \deg h_{R/J(G)}(t) = \reg(R/J(G))=\Delta_I+|C|-2.
    \]
\end{cor}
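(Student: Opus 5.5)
Both quantities can be computed directly from results already in place, and then compared.

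For the degree, we use \Cref{lem:deg-via-ord}, which gives $\deg h_{R/J(G)}(t)=n-2-M(G)$. Here $n=|C|+m$, and \Cref{prop:split-multiplicity} gives $M(G)=m-\Delta_I$. Substituting,
\[
\deg h_{R/J(G)}(t)=|C|+m-2-(m-\Delta_I)=\Delta_I+|C|-2.
\]
This step uses the standing assumption that $G$ is connected with at least one edge, which \Cref{prop:split-multiplicity} needs when $m>0$. The case $m=0$ is also covered there, with $\Delta_I=0$ by convention.

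For the regularity, split graphs are chordal: a cycle of length at least $4$ without a chord would have to put two nonadjacent vertices in $I$ and wrap through $C$ in a way that forces a chord. This is the standard fact, consistent with the remark that split graphs are $(2K_2,C_4,C_5)$-free. So Terai's duality and \Cref{thm:pdim_chordal} give
\[
\reg(R/J(G))=\pdim(R/I(G))-1=n-i(G).
\]
By \Cref{rem:split-iG}, $i(G)=m-\Delta_I+1$. Hence
\[
\reg(R/J(G))=|C|+m-(m-\Delta_I+1)=\Delta_I+|C|-1.
\]

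This gives $\Delta_I+|C|-1$ rather than $\Delta_I+|C|-2$, so the regularity step is where I expect the main obstacle: the two sides disagree by exactly one. I would resolve it by checking the indexing in \Cref{thm:pdim_chordal} against an example. For $K_2$ with $C=\{c\}$ and $I=\{v\}$, we have $\reg(R/J)=\reg(R/(x_c,x_v))=0$, while $n-i(G)=2-1=1$. This shows that the displayed form of \Cref{thm:pdim_chordal} should be read as $\pdim(R/I(G))=n-i(G)$, which is the Dao--Schweig statement for chordal graphs. With that reading,
\[
\reg(R/J(G))=\pdim(R/I(G))-1=n-i(G)-1=\Delta_I+|C|-2,
\]
which matches the degree computation and gives the diagonal equality. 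I would confirm this normalization against the radius-two tree computation in \Cref{prop:radius2-pdim}: for the star $K_{1,B}$ one has $p=B$ and $i=1$, so $\pdim=n-i$, which agrees.
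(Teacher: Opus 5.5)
Your proposal is correct and follows the paper's proof: the degree comes from $\deg h_{R/J(G)}(t)=n-2-M(G)$ with $M(G)=m-\Delta_I$, and the regularity comes from Terai duality, the Dao--Schweig formula for chordal graphs, and $i(G)=m-\Delta_I+1$. Your off-by-one diagnosis is also right. The displayed form of \Cref{thm:pdim_chordal} is misnormalized, and the paper's own arithmetic in this proof uses the correct version $\pdim(R/I(G))=n-i(G)$, as do its later uses of $\reg(R/J(G))=n-i(G)-1$.
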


Note that \(\Delta_I+|C|\) is independent of the chosen split partition, since
\(\Delta_I+|C|=n-M(G)\) by Proposition~\ref{prop:split-multiplicity} and \(n=m+|C|\).

\begin{proof}
From \Cref{lem:deg-via-ord} we have $ \deg h_{R/J(G)}(t) = m+|C|-2-M(G)$, and the desired expression follows immediately  from Proposition~\ref{prop:split-multiplicity}. 

For the regularity of the cover ideal, we use \Cref{thm:pdim_chordal} and \Cref{rem:split-iG}.  Thus, 
\[
\reg(R/J(G))= \pdim(R/I(G))-1= m+|C|-m+\Delta_I-2= \Delta_I+|C|-2. \qedhere
\] 
\end{proof}

We are now ready to discuss the realizable $(\reg,\deg h)$ pairs for connected split graphs. 

\begin{thm}\label{cor:split-realizable-pairs}
Fix $n\ge 2$ and let
\[
\mathcal{S}_n
=\Bigl\{\bigl(\reg(R/J(G)),\ \deg h_{R/J(G)}(t)\bigr)\ :\ 
G \text{ is a connected split graph on } n \text{ vertices}\Bigr\}.
\]
Then $\mathcal{S}_n$ consists precisely of diagonal pairs $(q,q)$ with $q_{\min}(n)\le q\le n-2$  where 
\[
q_{\min}(n):=\min_{1\le c\le n-1}\Bigl(c+\Bigl\lceil \frac{n}{c}\Bigr\rceil-3\Bigr).
\]
\end{thm}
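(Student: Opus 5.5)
By \Cref{cor:split_cover}, every connected split graph $G$ on $n$ vertices has $\reg(R/J(G))=\deg h_{R/J(G)}(t)=\Delta_I+|C|-2$. So every pair in $\mathcal{S}_n$ lies on the diagonal, and the theorem reduces to a purely combinatorial question. Namely, as $G$ ranges over connected split graphs on $n$ vertices, which values does $q(G):=\Delta_I+|C|-2$ take? The remark after \Cref{cor:split_cover} says $\Delta_I+|C|=n-M(G)$ does not depend on the chosen split partition. Hence I may fix any split partition $V(G)=C\sqcup I$ with $c:=|C|$ and $m:=|I|=n-c$.

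\emph{Lower bound.} Since $G$ has an edge and $I$ is independent, $C\neq\emptyset$, so $c\ge 1$. If $m=0$, then $G=K_n$ and $q=0+n-2=n-2$. This value also arises from $c=n-1$ in the formula, since $(n-1)+\lceil n/(n-1)\rceil-3=n-2$ for $n\ge 3$; the case $n=2$ is $K_2$, which equals the star with $c=1$. So assume $1\le c\le n-1$. By connectivity, every vertex of $I$ has at least one neighbor in $C$. By pigeonhole, some $c_0\in C$ is then adjacent to at least $\lceil m/c\rceil$ vertices of $I$, so $\Delta_I\ge\lceil (n-c)/c\rceil=\lceil n/c\rceil-1$. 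This gives $q\ge c+\lceil n/c\rceil-3\ge q_{\min}(n)$. The upper bound $q\le n-2$ is immediate from $\Delta_I\le m$.

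\emph{Realization.} Fix $c\in[1,n-1]$ attaining $q_{\min}(n)$, and put $m=n-c\ge1$. For each integer $k$ with $\lceil m/c\rceil\le k\le m$, I construct $G_k$ as follows. Take a clique $C=\{c_1,\dots,c_c\}$ and an independent set $I$ of size $m$. Attach each vertex of $I$ to exactly one clique vertex, so that $c_1$ receives exactly $k$ vertices of $I$ and every other $c_i$ receives at most $k$. This is possible because $k\ge\lceil m/c\rceil$: put $k$ vertices on $c_1$ and distribute the remaining $m-k\le (c-1)k$ vertices among $c_2,\dots,c_c$ with at most $k$ each. The graph $G_k$ is connected, since every vertex of $I$ has a clique neighbor, and it is split with $\Delta_I=k$. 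Hence $q(G_k)=c+k-2$, and as $k$ varies this covers every integer from $c+\lceil m/c\rceil-2=q_{\min}(n)$ up to $c+m-2=n-2$.

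The only delicate point is making sure the minimum in $q_{\min}(n)$ is taken over the right range of $c$. One must not allow $c=n$ as a separate minimizer: it only contributes $n-2$, as noted above. One must also check that requiring each vertex of $I$ to have a neighbor in $C$ is exactly what connectivity imposes, so that the pigeonhole bound is both valid and attained.
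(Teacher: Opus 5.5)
Your proposal is correct and follows essentially the paper's argument. You reduce to the diagonal via \Cref{cor:split_cover}, get $\Delta_I\ge\lceil m/c\rceil$ by counting the $C$--$I$ edges that connectivity forces, and realize every value by distributing $I$ among the clique vertices so that one of them receives exactly $\Delta_I$. The differences are cosmetic: you fix a single minimizing $c$ and vary $k$, whereas the paper treats every $c\ge 2$. You also explicitly handle the $m=0$ (complete graph) case, which the paper leaves implicit.
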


\begin{figure}[ht]
\centering
\begin{tikzpicture}[scale=0.65]

    \draw[->] (0,0) -- (8.5,0) node[right] {$\text{reg}$};
    \draw[->] (0,0) -- (0,8.5) node[above] {$\text{deg}$};

    \foreach \x in {1,2,3,4,5,6,7,8} {
        \draw (\x,-0.1) -- (\x,0.1);
        \node[below] at (\x,-0.2) {\x};
    }
    
    \foreach \y in {1,2,3,4,5,6,7,8} {
        \draw (-0.1,\y) -- (0.1,\y);
        \node[left] at (-0.15,\y) {\y};
    }

    \foreach \x/\y in {
        4/3,4/4,4/5,4/6,4/7,
        5/5,5/6,5/7,
        6/6,6/7,
        7/7
    }{
        \filldraw[teal] (\x,\y) circle (3pt);
    }

    \foreach \x/\y in {3/3,4/4,5/5,6/6,7/7}{
        \draw[orange, very thick] (\x-0.14,\y-0.14) -- (\x+0.14,\y+0.14);
        \draw[orange, very thick] (\x-0.14,\y+0.14) -- (\x+0.14,\y-0.14);
    }

    \begin{scope}[shift={(5.1,1.1)}]
        \filldraw[teal] (0,0) circle (3pt);
        \node[anchor=west] at (0.3,0) {radius $\leq 2$ trees};
        \draw[orange, very thick] (0,-0.6) ++(-0.14,-0.14) -- ++(0.28,0.28);
        \draw[orange, very thick] (0,-0.6) ++(-0.14,0.14) -- ++(0.28,-0.28);
        \node[anchor=west] at (0.3,-0.6) {split graphs};
    \end{scope}

    \foreach \x/\y in {5/4,6/4,6/5,7/5,7/6}{
        \draw[blue, very thick, fill=white] (\x,\y) circle (3pt);
    }
\end{tikzpicture}

\caption{All possible $(\reg,\deg h)$ values for connected graphs on $n=9$ vertices.
Filled points are realized by split graphs and radius--$\leq 2$ trees; hollow points are not.}
\label{fig:reg_deg_9_split_tree_9}
\end{figure}
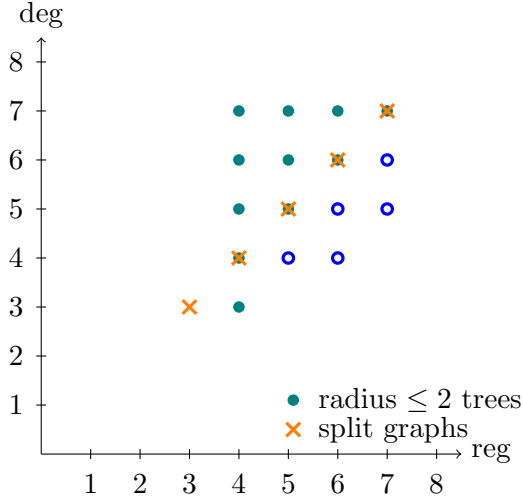

\begin{proof}
Let $G$ be connected split graph with $V(G)=C\sqcup I$, where $|C|=c$ and $|I|=m=n-c$,
and let $\Delta_I=\max\{\deg_I(x):x\in C\}$ as in Notation~\ref{not:split-params}.

If $c=1$, then $G$ is a star with center $C$ and $\Delta_I=m=n-1$. Hence
$\Delta_I+c-2=n-2$.  Thus this case only realizes the top diagonal value and we may assume $c\ge 2$.

By Corollary~\ref{cor:split_cover},
\[
\reg(R/J(G))=\deg h_{R/J(G)}(t)=\Delta_I+c-2,
\]
so every realizable pair is diagonal. We claim that  $ q_{\min} (n) \leq  \Delta_I+c-2 \leq n-2$.  The upper bound is immediate since $\Delta_I\le m$ implies $\Delta_I+c-2\le m+c-2=n-2$.

Since $G$ is connected, every vertex of $I$ has a neighbor in $C$. Hence the number
of edges between $C$ and $I$, denoted by $e(C,I)$, satisfies $e(C,I)\ge m$. On the other hand,
$e(C,I)=\sum_{x\in C}\deg_I(x)\le c\Delta_I$. So, we have $c\Delta_I\ge m$. Thus
\[
\Delta_I\ \ge\ \Bigl\lceil \frac{m}{c}\Bigr\rceil
=\Bigl\lceil \frac{n-c}{c}\Bigr\rceil
=\Bigl\lceil \frac{n}{c}\Bigr\rceil-1.
\]
Therefore
\[
\Delta_I+c-2\ \ge\ c+\Bigl\lceil \frac{n}{c}\Bigr\rceil-3,
\]
and taking the minimum over $2\le c\le n-1$ gives $\Delta_I+c-2\ge q_{\min}(n)$.

Conversely, fix $c\in \{2,\ldots, n-1\}$ and an integer $q$ with
$c+\lceil n/c\rceil-3\le q\le n-2$. Set $m=n-c$ and $\Delta_I=q-c+2$.
Then $\lceil m/c\rceil\le \Delta_I\le m$. Choose nonnegative integers
$s_1,\dots,s_c$ with $\sum_{j=1}^c s_j=m$, $s_1=\Delta_I$, and $s_j\le \Delta_I$ for all $j$.
Let $C=\{x_1,\dots,x_c\}$ induce a clique, let $I$ be independent, partition
$I=I_1\sqcup\cdots\sqcup I_c$ with $|I_j|=s_j$, and join $x_j$ to every vertex of $I_j$.
Then $G$ is a connected  split graph and it satisfies $\max_{x\in C}\deg_I(x)=\Delta_I$. Hence
\[
\reg(R/J(G))=\deg h_{R/J(G)}(t)=\Delta_I+c-2=q.
\]
Thus every $q$ in the stated range is observed by a split graph, proving the claim.
\end{proof}

\section{Graphs with $\reg(R/J(G))=n-2$ and varying $\deg h$}
\label{sec:reg=n-2}

In this section we study graphs $G$ on $n$ vertices whose cover ideal attains the maximal possible regularity,
\[
\reg(R/J(G))=n-2.
\]
and we construct families for which the degree $\deg h_{R/J(G)}(t)$ varies. Our main tool is the cone operation:
if $G=H*K_1$ is obtained by coning over a graph $H$ on $n-1$ vertices, then $\pdim(R/I(G))=n-1$, hence
$\reg(R/J(G))=n-2$. Since the degree is governed by $M(G)$,
\[
\deg h_{R/J(G)}(t)=n-2-M(G),
\]
to vary $\deg h$ while keeping $\reg(R/J(G))$ fixed, it suffices to build graphs with controlled $M(G)$.
We do this by prescribing independence polynomials for suitable chordal graphs $H$ (built from a split graph and a star $K_{1,r}$), and then coning over $H$.

We use the following remark repeatedly throughout this section.

\begin{remark}\label{rem:setting}
Let $H$ be a graph on $(n-1)$-vertices and $G=H\ast K_1$ be the graph obtained from $H$ by coning with a new vertex $u$. 
In addition, since the cone vertex $u$ is adjacent to every vertex of $H$, the only independent set containing $u$
is $\{u\}$. So
\[
P_G(x)=P_H(x)+x.
\]
If $H = K \sqcup L$, then $P_H(x)= P_K(x) P_L(x)$.

Lastly,  the independence polynomial of a star graph $K_{1,r}$ where $r\geq 1$ is
$$P_{K_{1,r}}(x)=(1+x)^r+x.$$
\end{remark}

\begin{construction}\label{cons:Bk}
Fix an integer $k\ge 1$.
Let
\[
C=\{c_1,c_2,\dots,c_{k+1}\}
\qquad\text{and}\qquad
I=\{z_1,\dots,z_{k-1}\},
\]
where $I=\emptyset$ if $k=1$.
Define a graph $B_k$ on the vertex set $V(B_k)=C\sqcup I$ such that
\begin{itemize}
\item $C$ spans a clique $K_{k+1}$;
\item $I$ is an independent set;
\item for each $1\le j\le k-1$, the neighborhood of $z_j$ is
\[
N_{B_k}(z_j)=\{c_1,c_2,\dots,c_{j}\}.
\]
\end{itemize}
Equivalently, the neighborhoods are nested
\[
N(z_1)\subset N(z_2)\subset \cdots \subset N(z_{k-1}).
\]
In particular, $B_k$ is a  split graph  (hence chordal) and $|V(B_k)|=2k$.
\end{construction}

\begin{figure}[ht]
\centering
\scalebox{0.8}{
\begin{tikzpicture}[thick, line cap=round, line join=round]
\tikzset{
  v/.style={circle, fill=black, inner sep=1.8pt},
  lab/.style={draw=none, fill=none, inner sep=0pt}
}

\begin{scope}[xshift=0cm]
  \node[v] (bOnec1) at (0,0) {};
  \node[v] (bOnec2) at (0,2) {};
  \draw (bOnec1)--(bOnec2);

  \node[lab, below=2mm] at (bOnec1.south) {$c_1$};
  \node[lab, above=2mm] at (bOnec2.north) {$c_2$};
  \node[lab, below=12mm] at (0,0) {$B_1$};
\end{scope}

\begin{scope}[xshift=4.2cm]
  \node[v] (bTwoc1) at (0,0) {};
  \node[v] (bTwoc2) at (0,2) {};
  \node[v] (bTwoc3) at (-1.8,1) {};
  \node[v] (bTwoz1) at (2.2,0) {};

  \draw (bTwoc1)--(bTwoc2)--(bTwoc3)--(bTwoc1);
  \draw (bTwoc1)--(bTwoz1);

  \node[lab, below=2mm] at (bTwoc1.south) {$c_1$};
  \node[lab, above=2mm] at (bTwoc2.north) {$c_2$};
  \node[lab, left=2mm]  at (bTwoc3.west)  {$c_3$};
  \node[lab, below=2mm] at (bTwoz1.south) {$z_1$};
  \node[lab, below=12mm] at (0,0) {$B_2$};
\end{scope}

\begin{scope}[xshift=8.6cm]
  \node[v] (bThreec4) at (0,0) {};
  \node[v] (bThreec3) at (0,2) {};
  \node[v] (bThreec1) at (2.6,0) {};
  \node[v] (bThreec2) at (2.6,2) {};
  \node[v] (bThreez1) at (5.8,0) {};
  \node[v] (bThreez2) at (5.8,2) {};

  \draw (bThreec4)--(bThreec3)--(bThreec2)--(bThreec1)--(bThreec4);
  \draw (bThreec3)--(bThreec1);
  \draw (bThreec2)--(bThreec4);

  \draw (bThreec1)--(bThreez1);
  \draw (bThreec1)--(bThreez2);
  \draw (bThreec2)--(bThreez2);

  \node[lab, below=2mm] at (bThreec1.south) {$c_1$};
  \node[lab, above=2mm] at (bThreec2.north) {$c_2$};
  \node[lab, above=2mm] at (bThreec3.north) {$c_3$};
  \node[lab, below=2mm] at (bThreec4.south) {$c_4$};
  \node[lab, below=2mm] at (bThreez1.south) {$z_1$};
  \node[lab, above=2mm] at (bThreez2.north) {$z_2$};

  \node[lab, below=12mm] at (2.6,0) {$B_3$};
\end{scope}

\end{tikzpicture}
}
\caption{$B_k$ for $k=1,2,3$ from Construction~\ref{cons:Bk} from left to right}
\label{fig:B_k}
\end{figure}
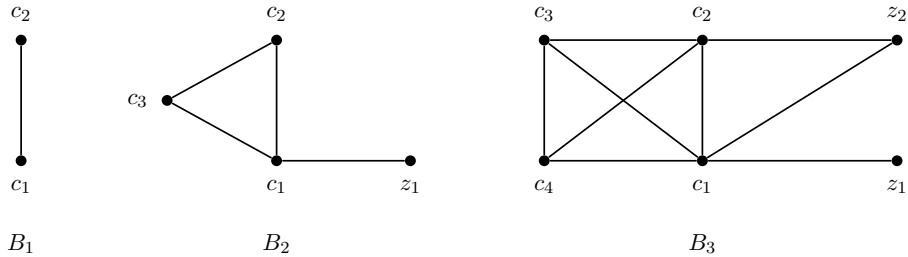

\begin{lem}
Let $B_k$ be as in Construction~\ref{cons:Bk}.
Then 
\[
P_{B_k}(x)=2(1+x)^k-1.
\]
\end{lem}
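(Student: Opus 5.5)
Since $B_k$ is a split graph with partition $C\sqcup I$, $|C|=k+1$ and $m=|I|=k-1$, the plan is to apply Proposition~\ref{prop:split-independence-polynomial}:
\[
P_{B_k}(x)=(1+x)^{k-1}+x\sum_{c\in C}(1+x)^{a(c)}.
\]
So the whole proof reduces to computing $a(c_i)=|I\setminus N(c_i)|$ for each $i$ and summing a geometric-type series.

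First compute the non-neighborhoods. By construction $z_j$ is adjacent to $c_i$ exactly when $i\le j$. Hence for $1\le i\le k-1$ the $I$-neighbors of $c_i$ are $z_i,\dots,z_{k-1}$, so $\deg_I(c_i)=k-i$ and $a(c_i)=i-1$. For $c_k$ and $c_{k+1}$ there are no $I$-neighbors, so $a(c_k)=a(c_{k+1})=k-1$. (When $k=1$, $I=\emptyset$ and both clique vertices have $a=0$, which is consistent with these formulas.) Thus
\[
\sum_{c\in C}(1+x)^{a(c)}=\sum_{i=0}^{k-2}(1+x)^{i}+2(1+x)^{k-1}.
\]

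Next sum and simplify. Writing $y=1+x$, so that $x=y-1$, the telescoping identity $(y-1)\sum_{i=0}^{k-2}y^i=y^{k-1}-1$ gives
\[
P_{B_k}=y^{k-1}+(y^{k-1}-1)+2(y-1)y^{k-1}=2y^k-1,
\]
which is the claim. The only step needing care is the bookkeeping of $a(c_i)$, namely that exactly two clique vertices, $c_k$ and $c_{k+1}$, miss all of $I$, together with the degenerate case $k=1$. Alternatively one can check the case $k=1$ directly: $P_{K_2}(x)=1+2x=2(1+x)-1$.
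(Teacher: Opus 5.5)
Your proposal is correct and follows the paper's proof. The paper redoes the split-graph count of Proposition~\ref{prop:split-independence-polynomial} in-line, with the same exponents $a(c_i)=\min(i-1,k-1)$, and sums the same geometric series to reach $2(1+x)^k-1$; your bookkeeping ($a(c_i)=i-1$ for $i\le k-1$, $a(c_k)=a(c_{k+1})=k-1$, and the case $k=1$) is right and slightly cleaner than the paper's text, which lists the allowable vertices as $z_1,\dots,z_{\min(i,k-1)}$ where $z_1,\dots,z_{\min(i-1,k-1)}$ is meant.
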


\begin{proof}
Independent sets in $B_k$ fall into two types.

\begin{enumerate}
    \item[(i)] Consider independent sets avoiding the clique $C$.
Since $I$ is independent, any subset of $I$ is independent, contributing $(1+x)^{k-1}$.

    \item[(ii)] Consider independent sets containing a vertex of the clique $C$. If an independent set contains $c_i\in C$, it cannot contain any other vertex of $C$. It may contain those $z_j$ that are not adjacent to $c_i$. By construction, $z_j$ is adjacent to $c_i$ if and only if $i\le j$. Thus the allowable vertices among $I$ are $z_1,\dots,z_{\min(i,k-1)}$, giving $(1+x)^{\min(i-1,k-1)}$ choices for the subset of $I$. Therefore the total contribution from choosing $c_i$ is $x(1+x)^{\min(i-1,k-1)}$. 
\end{enumerate}

Summing over $i=1,\dots,k+1$ yields
\[
P_{B_k}(x)
= (1+x)^{k-1} + x\!\left(\sum_{j=0}^{k-2} (1+x)^j + 2(1+x)^{k-1}\right).
\]
Since
\[
\sum_{j=0}^{k-2}(1+x)^j=\frac{(1+x)^{k-1}-1}{(1+x)-1},
\]
we obtain
\[
P_{B_k}(x)
= (1+x)^{k-1} + \bigl((1+x)^{k-1}-1\bigr) + 2x(1+x)^{k-1}
= 2(1+x)^k-1,
\]
as claimed.
\end{proof}

\begin{construction}\label{cons:Gkr}
Fix integers $k\ge 1$ and $r\ge k$.
Let $H:=B_k\ \sqcup\ K_{1,r}$ and $G_{k,r}$ be the cone over $H$, i.e.
\[
G_{k,r}\ :=\  H  * K_1,
\]
obtained by adding a new vertex $u$ adjacent to every vertex of $H$.
Then $G_{k,r}$ is a connected  chordal graph with $|V(G_{k,r})|=2k+r+2$.
\end{construction}

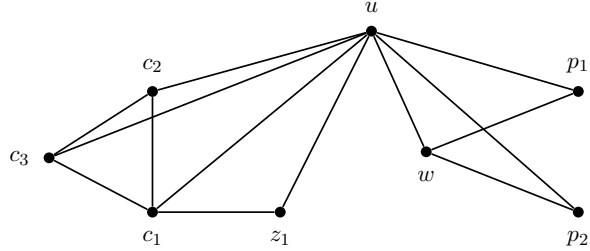
\begin{figure}[ht]
\centering
\scalebox{0.8}{%
\begin{tikzpicture}[thick, line cap=round, line join=round]
\tikzset{
  v/.style={circle, fill=black, inner sep=1.8pt},
  lab/.style={draw=none, fill=none, inner sep=0pt}
}

\node[v] (c1) at (0,0) {};
\node[v] (c2) at (0,2) {};
\node[v] (c3) at (-1.7,0.9) {};
\node[v] (z1) at (2.1,0) {};

\draw (c1)--(c2)--(c3)--(c1);
\draw (c1)--(z1);

\node[lab, below=2mm] at (c1.south) {$c_1$};
\node[lab, above=2mm] at (c2.north) {$c_2$};
\node[lab, left=2mm]  at (c3.west)  {$c_3$};
\node[lab, below=2mm] at (z1.south) {$z_1$};

\node[v] (w)  at (4.5,1) {};
\node[v] (p1) at (7.0,2) {};
\node[v] (p2) at (7.0,0) {};

\draw (w)--(p1);
\draw (w)--(p2);

\node[lab, below=2mm] at (w.south)  {$w$};
\node[lab, above=2mm] at (p1.north) {$p_1$};
\node[lab, below=2mm] at (p2.south) {$p_2$};

\node[v] (u) at (3.6,3.0) {};
\node[lab, above=2mm] at (u.north) {$u$};

\draw (u)--(c1);
\draw (u)--(c2);
\draw (u)--(c3);
\draw (u)--(z1);
\draw (u)--(w);
\draw (u)--(p1);
\draw (u)--(p2);

\end{tikzpicture}%
}
\caption{The graph \(G_{2,2}\) from Construction~\ref{cons:Gkr}: the cone over \(B_2\sqcup K_{1,2}\)}
\label{fig:G_22}
\end{figure}

Next, we compute the multiplicity of this family.

\begin{lem}\label{lem:Gkr-M=k}
Let $G_{k,r}$ be as in Construction~\ref{cons:Gkr}.
Then 
\[
M(G_{k,r})=k.
\]
\end{lem}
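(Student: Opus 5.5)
The plan is to compute $P_{G_{k,r}}(x)$ explicitly from the pieces already available, substitute $u=1+x$ as in \Cref{not:M-and-leading}, and read off the lowest power of $u$ that appears.

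First, apply \Cref{rem:setting}. Since $G_{k,r}$ is the cone over $H=B_k\sqcup K_{1,r}$, we have $P_{G_{k,r}}(x)=P_H(x)+x$. The disjoint union gives $P_H(x)=P_{B_k}(x)\,P_{K_{1,r}}(x)$. Inserting the formula $P_{B_k}(x)=2(1+x)^k-1$ from the preceding lemma and $P_{K_{1,r}}(x)=(1+x)^r+x$ from \Cref{rem:setting}, we obtain
\[
P_{G_{k,r}}(x)=\bigl(2(1+x)^k-1\bigr)\bigl((1+x)^r+x\bigr)+x .
\]

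Next, set $u=1+x$, so that $x=u-1$. Then
\[
P_{G_{k,r}}(u-1)=(2u^k-1)(u^r+u-1)+u-1 .
\]
Expanding, the constant terms cancel and so do the linear terms $\pm u$, leaving
\[
P_{G_{k,r}}(u-1)=2u^{k+r}+2u^{k+1}-2u^k-u^r .
\]
Since $M(G_{k,r})=\ord_{u=0}P_{G_{k,r}}(u-1)$, it remains to find the lowest-order term, using the hypothesis $r\ge k$ from Construction~\ref{cons:Gkr}.

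I will split into two cases.
\begin{enumerate}
\item[(i)] If $r>k$, the exponents $k+r$, $k+1$ and $r$ all strictly exceed $k$. The coefficient of $u^k$ is then $-2\neq 0$, so the order is $k$.
\item[(ii)] If $r=k$, the terms $-2u^k$ and $-u^k$ combine to $-3u^k$. The remaining exponents $2k$ and $k+1$ exceed $k$ because $k\ge 1$, so again the order is exactly $k$.
\end{enumerate}
In both cases $M(G_{k,r})=k$.

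The main point needing care is the cancellation bookkeeping in the expansion, namely checking that no term of order below $k$ survives. A secondary point is that the leading coefficient, $-2$ or $-3$, is nonzero. Since $P_G$ has integer coefficients, the multiplicity of $-1$ as a root is computed over $\mathbb{Q}$, so the nonvanishing of $-3$ causes no issue regardless of $\Bbbk$.
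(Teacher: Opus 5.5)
Your proof is correct and follows essentially the same route as the paper: you combine the cone, disjoint-union, $B_k$ and star formulas to get $P_{G_{k,r}}$, then use $r\ge k$ to show that $(1+x)^k$ divides it exactly. Your case split is slightly more careful than the paper. The paper writes $P_{G_{k,r}}=(1+x)^kQ(x)$ with $Q(x)=2(1+x)^r+2x-(1+x)^{r-k}$ and asserts $Q(-1)=-2$. When $r=k$, however, the term $(1+x)^{0}$ survives and $Q(-1)=-3$, exactly as your case (ii) finds; this is still nonzero, so the conclusion stands.
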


\begin{proof}
It follows from Remark~\ref{rem:setting} that
\begin{align*}
    P_{G_{k,r}}(x)
    &=P_{B_k}(x)P_{K_{1,r}}(x) +x \\
    &=\Big( 2(1+x)^k-1\Big) \Big(  (1+x)^r+x\Big) +x \\
    &= 2(1+x)^{k+r} +2x(1+x)^k -(1+x)^r\\
    &= (1+x)^k \underbrace{\Big(  2 (1+x)^r+2x -(1+x)^{r-k}\Big)}_{Q(x)}
\end{align*}
since $r\geq k$. Hence  $M(G_{k,r})=k$ since $Q(-1)=-2\neq 0$.
\end{proof}

We can provide a lower bound and an upper bound on the degree of the $h$-polynomial for this class of graphs.

\begin{lem}\label{lem:lower-bound-Gkr}
Fix $n$. Let $G_{k,r}$ be as in
Construction~\ref{cons:Gkr} with $|V(G_{k,r})|=n$. Then
\[
n-3\geq \deg h_{R/J(G_{k,r})}(t)\ge\
\left\lceil\frac{2(n-2)}{3}\right\rceil.
\]
\end{lem}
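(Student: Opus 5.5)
By \Cref{lem:deg-via-ord} and \Cref{lem:Gkr-M=k}, $\deg h_{R/J(G_{k,r})}(t)=n-2-M(G_{k,r})=n-2-k$. By Construction~\ref{cons:Gkr}, $n=2k+r+2$ with $k\ge 1$ and $r\ge k$. So the claimed inequalities reduce to bounds on $k$ in terms of $n$, and we never need to touch the Hilbert series again.

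For the upper bound, $k\ge 1$ gives $\deg h_{R/J(G_{k,r})}(t)=n-2-k\le n-3$ immediately.

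For the lower bound we must bound $k$ from above. Since $r\ge k$, we get $n-2=2k+r\ge 3k$, so $k\le \frac{n-2}{3}$. As $k$ is an integer, $k\le \lfloor (n-2)/3\rfloor$. Hence
\[
\deg h_{R/J(G_{k,r})}(t)=(n-2)-k\ \ge\ (n-2)-\Bigl\lfloor\frac{n-2}{3}\Bigr\rfloor .
\]
It remains to use the identity $N-\lfloor N/3\rfloor=\lceil 2N/3\rceil$ for integers $N\ge 0$. This holds because $\lfloor N/3\rfloor+\lceil 2N/3\rceil=N$: writing $N=3q+\epsilon$ with $\epsilon\in\{0,1,2\}$, both sides equal $3q+\epsilon$ in each of the three cases. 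Applying it with $N=n-2$ gives the stated lower bound $\lceil 2(n-2)/3\rceil$.

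There is no real obstacle; the only step needing care is the floor/ceiling identity, which is settled by the residue check mod $3$. One might also note that both bounds are attained, at $k=1$ and at $k=\lfloor (n-2)/3\rfloor$ with $r=n-2-2k\ge k$. This is not needed for the inequality, but it is presumably what the paper uses next to produce the vertical strings with $\reg(R/J(G))=n-2$.
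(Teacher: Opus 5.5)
Your proposal is correct and follows essentially the same route as the paper. Both use $M(G_{k,r})=k\ge 1$ to get the upper bound $n-3$, and $n=2k+r+2\ge 3k+2$ to get $k\le\lfloor (n-2)/3\rfloor$ and hence the lower bound; your residue check of $N-\lfloor N/3\rfloor=\lceil 2N/3\rceil$ just spells out the step the paper leaves implicit.
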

\begin{proof}
    By \Cref{lem:Gkr-M=k} we have $M(G_{k,r})=k\geq 1$. Thus \[
    \deg h_{R/J(G_{k,r})}(t) = n-2-M(G_{k,r}) = n-2-k \leq n-3,
    \]
    as desired. It remains to show the lower bound.

Since $|V(B_k)|=2k$ and $|V(K_{1,r})|=r+1$ with the constraint $r\ge k$, we have
\[
n  =  |V(G_{k,r})|
 =  2k+(r+1)+1
 \ge  2k+(k+1)+1
 =  3k+2.
\]
Consequently,
\[
1\leq k = M(G)  \le  \left\lfloor\frac{n-2}{3}\right\rfloor.
\] 
Then 
\[
\deg h_{R/J(G_{k,r})}(t)
= n-2-M(G_{k,r})
\ \ge\
n-2-\left\lfloor\frac{n-2}{3}\right\rfloor = \left\lceil\frac{2(n-2)}{3}\right\rceil.\qedhere
\]
\end{proof}

\begin{ex}
    Let $n=8$. The bound in the proof of \Cref{lem:lower-bound-Gkr} gives $k\le \lfloor(8-2)/3\rfloor=2$, and both values occur.
Indeed, solving $n=2k+r+2$ gives $r=8-2k-2$:
\[
(k,r)=(1,4)\quad\text{and}\quad (k,r)=(2,2),
\]
both satisfying $r\ge k$. So, the corresponding regularity and degree values are
\[
(\reg,\deg h)=(6,5)\ \ \text{(from $k=1$)}\qquad\text{and}\qquad
(\reg,\deg h)=(6,4)\ \ \text{(from $k=2$)}.
\]
Our Macaulay2 computations for all possible $(\reg,\deg h)$ pairs for $n=8$ show that  these  are all the $(\reg, \deg h )$ pairs  when $\reg = 6=n-2$ (apart from $(6,6)$  which is realized by a star).

Let $n=9$. One has $k\le \lfloor(9-2)/3\rfloor=2$. So, this family reaches down to
$(\reg,\deg h)=(7,5)$ but not below. Again, our Macaulay2 computations for all possible $(\reg,\deg h)$ pairs for $n=9$ shows that this is the smallest pair we can have on the $(\reg, \deg h)$  when $\reg = 7=n-2$ (see \Cref{fig:reg_deg_9_split_tree_9}).
\end{ex}

\section{A chordal family on the line 
$\deg h=\reg-1$}\label{sec:deg=reg-1}

In this section we construct, for fixed $n$, a family of connected chordal graphs whose cover ideals realize the line one step below the diagonal in the $(\reg,\deg h)$-plane, namely
\[
\deg h_{R/J(G)}(t)=\reg(R/J(G))-1.
\]
It follows from \Cref{thm:pdim_chordal}  that, for a chordal graph $G$, we have
\[
\reg(R/J(G))=n-i(G)-1. 
\]
We also have the following for any graph
\[
\deg h_{R/J(G)}(t)=n-2-M(G).
\]
So the condition $\deg h =\reg-1$ is equivalent to requiring $M(G)=i(G)$.
Our construction produces a radius--$2$ chordal graph $H_{n,p}$ (obtained from a radius--$2$ tree by gluing triangles along a fixed edge) for which
\[
i(H_{n,p})=M(H_{n,p})=p.
\]
Varying $p$ then yields, for fixed $n$, the full range of diagonal-adjacent pairs
\[
\bigl(\reg(R/J(H_{n,p})),\ \deg h_{R/J(H_{n,p})}(t)\bigr)=(r,r-1)
\]
for $\left\lfloor\frac{n}{2}\right\rfloor\le r\le n-3$.

\begin{construction}\label{cons:belowdiag-chordal}
Fix integers $p\ge 3$ and $n\ge 2p+1$, and set $k:=n-(2p+1)\ge 0$.
We define a connected chordal graph $H_{n,p}$ on $n$ vertices as follows.

The vertex set of $H_{n,p}$ is
\[
V(H_{n,p})=\{c,v_1,v_2\}\ \sqcup\ \{u_1,\dots,u_{p-1}\}\ \sqcup\ \{y_1,\dots,y_{p-2}\}\ \sqcup\ \{z\}\ \sqcup\ \{x_1,\dots,x_k\}.
\]
The edges of $H_{n,p}$ are
\begin{itemize}
    \item $cv_1,\ cv_2, cu_j $ for each $1\le j\le p-1$,
    \item $v_1y_\ell $ for each $1\le \ell\le p-2$ and $v_2z$
    \item    $cx_i$ and  $v_1x_i$ for each $1\le i\le k$ .
\end{itemize}

Equivalently, $H_{n,p}$ is obtained from the radius--$2$ tree with center $c$
(by attaching $p-1$ leaves at $c$, $p-2$ leaves at $v_1$, and one leaf at $v_2$)
and then gluing $k$ triangles along the edge $cv_1$ (each triangle has vertices $\{c,v_1,x_i\}$).
\end{construction}

\begin{remark}
For all $p\ge 3$ and $n\ge 2p+1$, the graph $H_{n,p}$ from Construction~\ref{cons:belowdiag-chordal}
is connected by construction. In addition, $H_{n,p}$ is chordal.
\end{remark}

\begin{lem}\label{lem:belowdiag-chordal-i}
Let  $H_{n,p}$ be the graph from Construction~\ref{cons:belowdiag-chordal}. Then the independent domination number of $H_{n,p}$ is
\[
i(H_{n,p})=p.
\]
\end{lem}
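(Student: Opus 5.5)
We prove $i(H_{n,p})\le p$ by exhibiting an independent dominating set of size $p$, and $i(H_{n,p})\ge p$ by a case analysis on whether $c$ belongs to the set.

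\emph{Upper bound.} Take $D=\{v_1,u_1,\dots,u_{p-1}\}$.
\begin{itemize}
\item $D$ is independent: the $u_j$ are leaves at $c$, and $v_1$ is adjacent only to $c$, the $y_\ell$ and the $x_i$.
\item $D$ is dominating: $v_1$ dominates $c$, every $y_\ell$ and every $x_i$. It remains to dominate $v_2$ and $z$, which $D$ does not yet do.
\end{itemize}
So we repair the set to $D=\{c,z,y_1,\dots,y_{p-2}\}$, which has size $1+1+(p-2)=p$.
\begin{itemize}
\item $D$ is independent: $c$ is adjacent to none of $z,y_\ell$, since $z$ is a leaf at $v_2$ and each $y_\ell$ is a leaf at $v_1$.
\item $D$ is dominating: $c$ dominates $v_1,v_2$, all $u_j$ and all $x_i$, while $z$ and the $y_\ell$ dominate themselves.
\end{itemize}
Hence $i(H_{n,p})\le p$.

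\emph{Lower bound.} Let $S$ be any independent dominating set, that is, a maximal independent set.

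If $c\in S$, then no neighbor of $c$ lies in $S$. Each $y_\ell$ has its unique neighbor $v_1\notin S$, so maximality forces $y_\ell\in S$. The same argument forces $z\in S$, since its unique neighbor $v_2\notin S$. Thus $|S|\ge 1+(p-2)+1=p$.

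If $c\notin S$, each leaf $u_j$ has only the neighbor $c$, so every $u_j$ must lie in $S$, giving $p-1$ vertices. Now $v_2$ or $z$ must lie in $S$, because $z$ needs domination from $z$ or $v_2$. This adds one more vertex, and it is distinct from the $u_j$, so $|S|\ge p$.

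Combining the two cases gives $i(H_{n,p})\ge p$, and with the upper bound, $i(H_{n,p})=p$.

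The only point needing care is the domination check in the upper bound. The first natural candidate fails because it leaves $v_2$ and $z$ undominated, which is why the set containing $c$ is the right witness. The lower bound is a routine leaf-forcing argument that applies for any $k\ge 0$.
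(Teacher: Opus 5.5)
Your proof is correct and matches the paper's argument: the same witness $\{c,z,y_1,\dots,y_{p-2}\}$ for the upper bound, and the same leaf-forcing case split on whether $c\in S$ for the lower bound. In the case $c\notin S$, the paper also forces a vertex of $\{v_1,y_1,\dots,y_{p-2}\}$ to reach $p+1$, but your bound of $p$ is all that is needed, so in a final write-up just drop the abandoned first candidate $\{v_1,u_1,\dots,u_{p-1}\}$.
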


\begin{proof}
Set
\[
D:=\{c\}\ \cup\ \{y_1,\dots,y_{p-2}\}\ \cup\ \{z\}.
\]
This set is independent since each $y_\ell$ is adjacent only to $v_1$, and $z$ is adjacent only to $v_2$,
while $c$ is adjacent only to $v_1,v_2,u_j,x_i$. It is dominating because
$c$ dominates $v_1,v_2,u_1,\dots,u_{p-1},x_1,\dots,x_k$, each $y_\ell$ dominates $v_1$, and $z$ dominates $v_2$.
Hence $i(H_{n,p})\le |D|=p$.

For the reverse inequality, let $D'$ be any independent dominating set.
Consider two cases.

\begin{enumerate}
    \item[Case 1:] Suppose $c\in D'$.
Then $v_1,v_2\notin D'$. Each $y_\ell$ must be dominated, but $N(y_\ell)=\{v_1\}$.
So, $y_\ell\in D'$ for all $\ell$. Similarly $N(z)=\{v_2\}$ forces $z\in D'$.
Thus $|D'|\ge 1+(p-2)+1=p$.
    \item[Case 2:] Suppose $c\notin D'$.
Each $u_j$ has $N(u_j)=\{c\}$. So $u_j\in D'$ for all $j$, giving $|D'|\ge p-1$.
Also, the leaf $z$ forces $z\in D'$ or $v_2\in D'$.
Finally, since $v_1\notin N[u_j]$ for any $j$, the vertices $y_1,\dots,y_{p-2}$ must be dominated
either by including $v_1$ or by including all $y_\ell$.
In either subcase we add at least one more vertex beyond the $p-1$ leaves $u_j$,
and we also need $z$ or $v_2$. Hence $|D'|\ge (p-1)+2=p+1$ in this case.
\end{enumerate}

Thus, every independent dominating set has size at least $p$. Therefore, $i(H_{n,p})=p$.
\end{proof}

\begin{lem}\label{lem:belowdiag-chordal-M}
Let  $H_{n,p}$ be the graph from Construction~\ref{cons:belowdiag-chordal}. Then, we have
\[
M(H_{n,p})=p.
\]
\end{lem}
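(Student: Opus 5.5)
The plan is to compute $P_{H_{n,p}}(x)$ in closed form by conditioning on the two hub vertices $c$ and $v_1$, much as in the proofs of \Cref{prop:PT} and \Cref{prop:JK-decomposition}. Then I will factor out the largest possible power of $(1+x)$ and check that the cofactor does not vanish at $x=-1$.

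Concretely, I partition the independent sets of $H_{n,p}$ into three disjoint classes.

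\emph{(i) Sets containing $c$.} These exclude $v_1,v_2$, all $u_j$ and all $x_i$. The vertices $y_1,\dots,y_{p-2}$ and $z$ are then free, giving the contribution $x(1+x)^{p-1}$.

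\emph{(ii) Sets containing $v_1$ but not $c$.} These exclude all $y_\ell$ and all $x_i$, while the $u_j$ are free. Since $v_1v_2\notin E(H_{n,p})$, the edge $v_2z$ remains as a copy of $K_2$, contributing $1+2x$. The total is $x(1+x)^{p-1}(1+2x)$.

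\emph{(iii) Sets containing neither $c$ nor $v_1$.} Here the $u_j$, the $y_\ell$ and the $x_i$ are all free (they are pairwise nonadjacent once $c,v_1$ are removed), and $v_2z$ again contributes $1+2x$. The total is $(1+x)^{(p-1)+(p-2)+k}(1+2x)$.

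Summing the three classes and factoring gives
\[
P_{H_{n,p}}(x)=(1+x)^{p-1}\Bigl(2x(1+x)+(1+x)^{p-2+k}(1+2x)\Bigr)=(1+x)^{p}\,Q(x),
\]
where
\[
Q(x)=2x+(1+x)^{p-3+k}(1+2x).
\]
This factorization uses $p\ge 3$, so that $p-3+k\ge 0$.

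It remains to evaluate $Q(-1)$, and this is the only point needing care, because the exponent $p-3+k$ can be zero. If $p-3+k>0$, then $Q(-1)=-2$. If $p=3$ and $k=0$, then $Q(-1)=-2+(-1)=-3$. In either case $Q(-1)\neq 0$, so $M(H_{n,p})=\ord_{x=-1}P_{H_{n,p}}(x)=p$.

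The main obstacle is simply making the case split correct: one must check that classes (i)--(iii) are exhaustive and disjoint, and track adjacencies precisely. In particular, $v_2$ is not adjacent to $v_1$, and each $x_i$ is adjacent to both $c$ and $v_1$. After that the argument is a direct evaluation.
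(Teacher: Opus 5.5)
Your proof is correct and follows essentially the paper's argument. The paper conditions on $\{c,v_1,v_2\}$ using four classes, whereas you use three by absorbing $v_2$ into the factor $1+2x$ coming from the edge $v_2z$. Both arrive at the same cofactor $Q(x)=2x+x(1+x)^{p+k-3}+(1+x)^{p+k-2}=2x+(1+x)^{p+k-3}(1+2x)$ and the same evaluation $Q(-1)\in\{-2,-3\}$, with the same special case $(p,k)=(3,0)$.
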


\begin{proof}
Write $U=\{u_1,\dots,u_{p-1}\}$, $Y=\{y_1,\dots,y_{p-2}\}$, and $X=\{x_1,\dots,x_k\}$.
We count independent sets by their intersection with $\{c,v_1,v_2\}$.

\begin{enumerate}
    \item[(i] \emph{Independent sets containing $c$.}
If $c\in S$, then $S$ cannot contain $v_1,v_2$, any $u_j$, or any $x_i$.
The vertices in $Y\cup\{z\}$ are then free. This contributes the following towards to independence polynomial:
\[
x(1+x)^{|Y|}(1+x)=x(1+x)^{p-1}.
\]
    \item[(ii)] \emph{Independent sets containing $v_1$ but not $c$.}
If $v_1\in S$ and $c\notin S$, then $Y\cup X$ is forbidden, while $U$ is free.
Also $v_2$ may be chosen or not (it is not adjacent to $v_1$).
If $v_2\notin S$, then $z$ is free; if $v_2\in S$, then $z$ is forbidden.
So the contribution is
\[
x(1+x)^{p-1}(1+x)\ +\ x^2(1+x)^{p-1}
= x(1+x)^p + x^2(1+x)^{p-1}.
\]

    \item[(iii)] \emph{Independent sets containing $v_2$ but neither $c$ nor $v_1$.}
If $v_2\in S$ and $c,v_1\notin S$, then $z$ is forbidden, while $U$, $Y$, and $X$ are all free.
Thus this contributes
\[
x(1+x)^{|U|+|Y|+|X|}
= x(1+x)^{(p-1)+(p-2)+k}
= x(1+x)^{2p+k-3}.
\]

    \item[(iv)] \emph{Independent sets containing none of $\{c,v_1,v_2\}$.}
Then $U,Y,X,$ and $z$ are all free, contributing
\[
(1+x)^{|U|+|Y|+|X|+1}=(1+x)^{2p+k-2}.
\]
\end{enumerate}

Summing (i)--(iv) gives
\[
\begin{aligned}
P_{H_{n,p}}(x)
&=(1+x)^p\underbrace{\Bigl(2x  +  x(1+x)^{p+k-3}  +  (1+x)^{p+k-2}\Bigr)}_{Q(x)}.
\end{aligned}
\]
Observe that $Q(-1)=-2$ if $p+k-3\ge 1$ (i.e.\ $p\ge 4$ or $k\ge 1$),
and $Q(-1)=-3$ in the remaining case $(p,k)=(3,0)$. In all cases $Q (-1)\neq 0$.
Hence  $M(H_{n,p})=p$.
\end{proof}

\begin{remark}
The case $p=2$ can also be handled by the same construction, but one needs $k\ge 1$ (equivalently $n\ge 6$).
In that case the same case-counting argument shows that  $i(H_{n,2})=M(H_{n,2})=2$ and $(\reg,\deg h)=(n-3,n-4)$.
\end{remark}

So, $H_{n,p}$ realizes the line $d=r-1$ (one step below the diagonal).

\begin{cor}
Fix an integer $n\ge 6$. For each integer $2\ \le\ p\ \le\ \left\lfloor\frac{n-1}{2}\right\rfloor$, 
let $H_{n,p}$ be the graph from Construction~\ref{cons:belowdiag-chordal}.
Then 
\[
\bigl(\reg(R/J(H_{n,p})),\ \deg h_{R/J(H_{n,p})}(t)\bigr)
=\bigl(n-p-1,\ n-p-2\bigr).
\]
In particular, for fixed $n$ these points lie on the line $\deg h=\reg-1$ and form the  set
\[
\Bigl\{(r,r-1):\ \left\lfloor\frac{n}{2}\right\rfloor \le r \le n-3\Bigr\}.
\]
\end{cor}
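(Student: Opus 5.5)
The plan is to combine the two formulas already recorded at the start of this section. Since $H_{n,p}$ is chordal, \Cref{thm:pdim_chordal} together with Terai's duality gives
\[
\reg(R/J(H_{n,p}))=\pdim(R/I(H_{n,p}))-1=n-i(H_{n,p}).
\]
(The section's introduction writes this as $n-i(G)-1$. The version used here is the one that matches \Cref{thm:pdim_chordal} as stated, and the claimed answer $n-p-1$ should be checked against it at the end.) On the other side, \Cref{lem:deg-via-ord} gives
\[
\deg h_{R/J(H_{n,p})}(t)=n-2-M(H_{n,p}).
\]

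For $p\ge 3$, \Cref{lem:belowdiag-chordal-i} gives $i(H_{n,p})=p$ and \Cref{lem:belowdiag-chordal-M} gives $M(H_{n,p})=p$. For $p=2$ I would use the preceding remark, which requires $k\ge1$. This holds because $n\ge 6$ gives $k=n-5\ge1$.

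The hypothesis $p\le\lfloor (n-1)/2\rfloor$ is exactly $n\ge 2p+1$, so the construction is defined throughout the stated range. In the $p=2$ case I would briefly verify $i=M=2$ by the same four-case count. The independence polynomial becomes
\[
(1+x)^2\bigl(2x+x(1+x)^{k-1}+(1+x)^{k}\bigr),
\]
and the second factor takes the value $-2$ or $-3$ at $x=-1$, depending on whether $k\ge 2$ or $k=1$. For the domination number, $\{c,z\}$ is an independent dominating set, since $Y$ is empty when $p=2$. Conversely, no single vertex dominates the graph, so $i=2$.

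The main obstacle is the indexing of the regularity. With $i=p$ the formula above gives $\reg=n-p$, whereas the corollary claims $n-p-1$. The pair sits on the line $\deg h=\reg-1$ only if $\reg-\deg h=(p-i)+1$, that is, only if $\reg=n-p-1$. I would resolve this by recomputing $\pdim$ directly on a small case, such as $n=7$, $p=3$, $k=0$, which is a tree and so can be handled with \Cref{thm:Jacques_Katzman}. This pins down whether the shift in \Cref{thm:pdim_chordal} should be read as $\pdim(R/I)=n-i(G)$. I expect it should: for a star, $\pdim(R/I)=n-1$ and $i=1$. So the intended reading is $\reg(R/J)=n-i-1$, which gives $(n-p-1,\,n-p-2)$.

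Finally, setting $r=n-p-1$, the range $2\le p\le\lfloor (n-1)/2\rfloor$ becomes
\[
n-1-\lfloor (n-1)/2\rfloor\le r\le n-3.
\]
Since $n-1-\lfloor (n-1)/2\rfloor=\lceil (n-1)/2\rceil=\lfloor n/2\rfloor$, this is exactly the displayed set.
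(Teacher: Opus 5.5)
Your proposal is correct and follows the paper's proof. Like the paper, you substitute $i(H_{n,p})=M(H_{n,p})=p$ into $\reg(R/J(G))=n-i(G)-1$ (valid for chordal $G$) and into $\deg h_{R/J(G)}(t)=n-2-M(G)$, and then reindex with $r=n-p-1$.

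Your handling of the off-by-one is also right. The star check confirms that $\pdim(R/I(G))=n-i(G)$ for chordal $G$, which is what the paper actually uses, so the displayed form of \Cref{thm:pdim_chordal} is a typo. Your explicit verification of the case $p=2$ supplies the argument that the paper's proof leaves to the preceding remark.
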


\begin{proof}
By Lemmas~\ref{lem:belowdiag-chordal-i} and \ref{lem:belowdiag-chordal-M},
we have $i(H_{n,p})=M(H_{n,p})=p$ for all $p$ in the stated range.
For chordal graphs,
\[
\reg(R/J(G))=n-i(G)-1
\qquad\text{and}\qquad
\deg h_{R/J(G)}(t)=n-2-M(G).
\]
Substituting $i(H_{n,p})=M(H_{n,p})=p$ gives
\[
\reg(R/J(H_{n,p}))=n-p-1,
\qquad
\deg h_{R/J(H_{n,p})}(t)=n-p-2,
\]
so $\deg h=\reg-1$.

Finally, letting $r:=n-p-1$, as $p$ ranges over
$2\le p\le \lfloor (n-1)/2\rfloor$ we obtain
\[
\left\lfloor\frac{n}{2}\right\rfloor
= n-\left\lfloor\frac{n-1}{2}\right\rfloor-1
\le r \le n-3.
\]
Hence the realized pairs are exactly
$\{(r,r-1): \lfloor n/2\rfloor \le r \le n-3\}$.
\end{proof}

As the following discussion/example indicates that this construction complements the previous section with a step towards filling all the points below the diagonal.

\begin{ex}
Note that for $n=9$, this construction covers all $(r,r-1)$ where $4\leq r\leq 6$ and this corresponds to all the $(\reg,\deg h)$ pairs we were not able to cover so far on this line below the diagonal (see \Cref{fig:reg_deg_9_split_tree_9}). Notice that we have a construction to cover $(\reg,\deg h)$ pairs when $r=n-2$. 
\end{ex}

\section{Block graphs lie on or above the line $\deg h=\reg-1$}
\label{sec:block_graphs}

In this section we show that block graphs exhibit a strong restriction in the $(\reg,\deg h)$-plane for cover ideals: if $G$ is a connected block graph and we set
\[
r:=\reg(R/J(G))\qquad\text{and}\qquad d:=\deg h_{R/J(G)}(t),
\]
then necessarily $d\ge r-1$. Equivalently, block graphs cannot contribute points strictly below the line $d=r-1$.
Since block graphs are chordal, one has $\reg(R/J(G))=n-i(G)-1$, while
\[
\deg h_{R/J(G)}(t)=n-2-M(G).
\]
So the inequality $d\ge r-1$ is precisely the statement $M(G)\le i(G)$.
We prove this by exploiting the clique-tree structure of block graphs: removing a leaf maximal clique yields a  recursion for the independence polynomial, and comparing the resulting orders at $x=-1$ with a parallel recursion for the independent domination number gives $M(G)\le i(G)$ by induction on the number of maximal cliques.

\begin{defn}
A  graph $G$ is a \emph{block graph} if every biconnected component (block) of $G$
is a clique. Equivalently, $G$ is chordal and any two distinct maximal cliques intersect
in at most one vertex.
\end{defn}

\begin{defn}
Let $G$ be a chordal graph. A \emph{clique tree} of $G$ is a tree $T$ whose vertices are the maximal
cliques of $G$ and such that for every vertex $v\in V(G)$, the set of maximal cliques containing $v$
forms a connected subtree of $T$.

If $G$ is a connected block graph, then $G$ admits a clique tree $T$.
Moreover, if two maximal cliques $K$ and $K'$ are adjacent in $T$, then
$K\cap K'$ consists of a single vertex $s$, and this vertex is a cut vertex of $G$, i.e.\ $G\setminus\{s\}$ is disconnected.
A maximal clique corresponding to a leaf of $T$ is called a \emph{leaf maximal clique}.
\end{defn}

\begin{notation}\label{not:block_graphs}
  Let $G$ be a connected block graph that is not a clique, and fix a clique tree of $G$.
Let $K$ be a leaf maximal clique, and let $K'$ be its unique neighbor in the clique tree. Set
\[
s:=K\cap K',\qquad C:=K\setminus\{s\},\qquad r:=|C|\ge 1,
\]
and define the induced subgraphs
\[
H:=G\setminus K,\qquad L:=G\setminus N_G[s].
\]  
\end{notation}

The following leaf-clique recursion is used in the proof of \Cref{thm:block-M-leq-i}.

\begin{lem}\label{lem:leaf-clique-recursion}
Let $G$ be a connected block graph and use Notation \ref{not:block_graphs}.
Then
\[
P_G(x)\ =\ (1+rx)P_H(x)\ +\ xP_L(x).
\]
\end{lem}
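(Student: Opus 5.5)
We will prove the recursion by partitioning the independent sets $S$ of $G$ according to how they meet the leaf clique $K=C\sqcup\{s\}$. Since $K$ is a clique, $|S\cap K|\le 1$, which gives three disjoint cases: $S\cap K=\emptyset$, $S\cap K=\{c\}$ for some $c\in C$, and $S\cap K=\{s\}$. This is the same counting strategy used in \Cref{prop:JK-decomposition} and \Cref{prop:split-independence-polynomial}.

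The key structural fact comes from $K$ being a leaf of the clique tree: every vertex $c\in C$ lies in no other maximal clique. Hence $N_G[c]=K$, so each $c\in C$ is adjacent only to the vertices of $K$. We justify this as follows. If $c$ had a neighbor $w\notin K$, the edge $cw$ would lie in some maximal clique $K''\neq K$ containing $c$. The cliques containing $c$ form a subtree of the clique tree, and the only neighbor of $K$ is $K'$, so $K'$ would have to contain $c$. Then $K\cap K'\supseteq\{s,c\}$, contradicting the fact that adjacent cliques meet in the single vertex $s$.

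We now count each case.
\begin{enumerate}
\item[(i)] If $S\cap K=\emptyset$, then $S$ is an arbitrary independent set of $G\setminus K=H$. This case contributes $P_H(x)$.
\item[(ii)] If $S\cap K=\{c\}$, then, since $N_G(c)\subseteq K$, the set $S\setminus\{c\}$ is an arbitrary independent set of $H$, and any such set extends. Summing over the $r$ choices of $c$ contributes $rxP_H(x)$.
\item[(iii)] If $S\cap K=\{s\}$, then $S\setminus\{s\}$ avoids $N_G[s]$, which contains $K$. It is therefore an arbitrary independent set of $L=G\setminus N_G[s]$. This case contributes $xP_L(x)$.
\end{enumerate}
Adding the three cases gives $(1+rx)P_H(x)+xP_L(x)$.

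The only nonroutine step is the claim that $N_G(c)\subseteq K$ for $c\in C$, which rests on the clique-tree subtree property together with the block-graph condition that maximal cliques meet in at most one vertex. Everything else is direct bookkeeping. For case (iii) we should also note that $L$ may be empty, in which case $P_L=1$ and the formula still holds.
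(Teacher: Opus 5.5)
Your proposal is correct and matches the paper's proof: both use the same three-way partition of independent sets according to $S\cap K$, with contributions $P_H(x)$, $rxP_H(x)$, and $xP_L(x)$. Your clique-tree subtree argument showing $N_G(c)\subseteq K$ for $c\in C$ justifies a step that the paper only asserts.
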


\begin{proof}
We partition the independent sets of $G$ into three disjoint classes according to how they meet $K$.

\begin{enumerate}
    \item \emph{Independent sets avoiding $K$.}
These are exactly the independent sets of the induced subgraph $H=G\setminus K$, contributing $P_H(x)$.

    \item \emph{Independent sets meeting $C$.}
Since $K$ is a clique, an independent set can contain \emph{at most one} vertex of $K$. Hence it contains
at most one vertex of $C$ and cannot contain $s$.
Moreover, because $K$ is a leaf in the clique tree, every vertex $v\in C$ belongs to no maximal clique
other than $K$. In particular, $v$ has no neighbors outside $K$. Therefore, after choosing $v\in C$,
the rest of the independent set can be \emph{any} independent set of $H$.
Thus this class contributes
\[
\sum_{v\in C} xP_H(x)\ =\ r xP_H(x).
\]
    \item \emph{Independent sets containing $s$.}
If an independent set contains $s$, then it avoids $N_G[s]$, and its remaining vertices form an
independent set of $L=G\setminus N_G[s]$. This contributes $xP_L(x)$.
\end{enumerate}

Adding the contributions from (1)--(3) gives
\[
P_G(x)\ =\ P_H(x)\ +\ r xP_H(x)\ +\ xP_L(x)\ =\ (1+rx)P_H(x)+xP_L(x),
\]
as claimed.
\end{proof}

Now we are for the main result of this section. 

\begin{thm}\label{thm:block-M-leq-i}
Let $G$ be a connected block graph. Then
\[
M(G)\ \le\ i(G).
\]
Consequently, 
\[
\reg(R/J(G))-\deg h_{R/J(G)}(t)
 = M(G)-i(G)+1 \le 1.
\]
\end{thm}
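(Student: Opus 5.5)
We prove the statement by induction on the number of maximal cliques, using the leaf-clique recursion of \Cref{lem:leaf-clique-recursion}. Since $H=G\setminus K$ and $L=G\setminus N_G[s]$ are generally disconnected, we first enlarge the class to all graphs whose components are block graphs, isolated vertices included. This is harmless because both invariants are additive over disjoint unions: $P_{G_1\sqcup G_2}=P_{G_1}P_{G_2}$ gives $M(G_1\sqcup G_2)=M(G_1)+M(G_2)$, and $i$ is additive as well. The base case is a clique $K_m$. Here $P=1+mx$ and $i=1$, so $M=0\le 1$ when $m\ge 2$ and $M=1=i$ when $m=1$. The second assertion of \Cref{thm:block-M-leq-i} then follows at once from \Cref{thm:pdim_chordal} and \Cref{lem:deg-via-ord}, since block graphs are chordal.

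For the inductive step we use Notation~\ref{not:block_graphs} and compare the two sides of the recursion with $i(G)$. On the domination side, let $D$ be a minimum independent dominating set of $G$.
\begin{itemize}
\item If $s\in D$, then $D\setminus\{s\}$ is an independent dominating set of $L$, so $i(L)\le i(G)-1$.
\item If $s\notin D$, then $s$ must still be dominated. Vertices of $C$ are adjacent only to vertices of $K$, so $D$ contains exactly one vertex $c\in C$ (and then dominates $s$). Removing it leaves an independent dominating set of $H$, so $i(H)\le i(G)-1$.
\end{itemize}
Hence $i(G)\ge 1+\min\{i(H),i(L)\}$, and by induction $M(H)\le i(H)$ and $M(L)\le i(L)$. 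On the polynomial side, the first summand $(1+rx)P_H$ has order $M(H)$ at $x=-1$ when $r\ge 2$, and order $M(H)+1$ when $r=1$. The second summand $xP_L$ has order $M(L)$. By \Cref{lem:ord-sum}, $M(G)$ equals the smaller of these two orders unless they coincide and the leading coefficients cancel. Outside the cancellation case this gives $M(G)\le \min\{M(H)+1,\,M(L)\}\le 1+\min\{i(H),i(L)\}\le i(G)$, which is what we need. The case $r=1$ needs one extra observation: if the minimum is attained by $H$, the bound $M(H)+1\le i(H)+1$ is still fine.

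The main obstacle is the equal-order cancellation case of \Cref{lem:ord-sum}(iii), where $M(G)$ can jump above the common order. The bounds above only give $M(G)\ge$ that order, and the slack in $i(G)\ge 1+\min\{i(H),i(L)\}$ may be zero. We would first try to choose $K$ carefully: take a leaf clique that is deepest in the clique tree, so that every clique hanging at $s$ other than $K'$ is a pendant clique. Then $H$ and $L$ differ from each other in a very controlled way, essentially by the cliques at $s$. In this setting the recursion can be rewritten at the vertex $s$ as a product-type formula, much as in the radius-two computation of \Cref{lem:MT-rad2}.

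If that does not suffice, we would strengthen the induction hypothesis to track the sign of the leading coefficient $c(\cdot)$ from \Cref{not:M-and-leading} in the extremal case $M=i$. The aim is to show that the two colliding leading terms, $c(H)$ (times $1-r$ or $1$) and $-c(L)$, cannot cancel when both orders equal $i(G)$. This is analogous to the $c(F')$ versus $c(F'')$ comparison in \Cref{thm:JK-recursion-M}, and the isolated-vertex case $P=1+x$, $c=1$, suggests a sign pattern depending only on $i$.
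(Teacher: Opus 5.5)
\textbf{The gap you flag cannot be closed, because the statement fails in exactly that case.} You correctly isolate the equal-order cancellation case as the crux. Neither choosing $K$ cleverly nor tracking signs can rescue the argument, because there the inequality $M(G)\le i(G)$ is false.

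Here is a counterexample. Let $Y$ be the tree with a center $y$ carrying four pendant leaves and two further neighbours $v_1,v_2$, where $v_1$ carries leaves $p_1,p_2$ and $v_2$ carries leaves $q_1,q_2$. Let $G$ be obtained from $Y$ by adding a vertex $s$ adjacent to $y$, $v_1$ and a new leaf $c$. Then $G$ is a connected block graph on $13$ vertices; its only non-bridge block is the triangle $syv_1$.

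\begin{itemize}
\item Take $K=\{s,c\}$, so $r=1$, $H=Y$ and $L=6K_1\sqcup K_{1,2}$.
\item Then $P_H=(1+x)^4\bigl((1+3x+x^2)^2+x\bigr)=(1+x)^5(1+6x+5x^2+x^3)$ and $P_L=(1+x)^6(1+3x+x^2)$.
\item Hence $P_G=(1+x)P_H+xP_L=(1+x)^6(1+7x+8x^2+2x^3)=(1+x)^7(1+6x+2x^2)$, so $M(G)=7$.
\item On the other hand, $\{c,y,p_1,p_2,q_1,q_2\}$ is a maximal independent set, so $i(G)\le 6$.
\end{itemize}

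This is precisely the bad scenario you feared. $H$ has zero slack, since $M(H)=i(H)=5$. Both summands have order $6$, and their leading $u$-coefficients $c(H)=-1$ and $-c(L)=+1$ cancel. So $M(G)=M(H)+2$ while $i(G)=1+i(H)=6$, and $\reg(R/J(G))-\deg h_{R/J(G)}(t)=6-4=2$.

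\textbf{Where the paper's proof goes wrong.} The paper handles this case by claiming that cancellation raises the order by at most one, i.e.\ $M(G)\le 1+\min\{M(H),M(L)\}$. Writing $Q_G=u^m(A+uB)$, it argues that $A(0)=0$ and $B(0)=h(0)\neq 0$ force $\ord_{u=0}(A+uB)=1$. This overlooks that the linear coefficient of $A$ also contributes to $[u^1](A+uB)$.

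That intermediate bound already fails for a $7$-vertex tree. Take a center with two pendant leaves and two pendant paths of length two. Then $P=(1+x)^3(1+4x)$, so $M=3$. With $K$ a pendant edge at the center, $M(H)=1$ and $M(L)=2$, so the claimed bound gives only $2$. This matches the paper's own \Cref{lem:MT-rad2} ($M=L_1+1$ when $L_1=L_2$ and $m$ is even). The $13$-vertex example above shows that the theorem itself also fails.

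\textbf{Your sign-tracking idea.} It cannot work as stated either. $K_1$ and the butterfly $K_1*(2K_2)$, with $P=(1+x)(1+4x)$, both have $M=i=1$, but their leading coefficients are $1$ and $-3$.

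\textbf{What remains valid.} The rest of your outline is correct: the reduction to disjoint unions, the inequality $i(G)\ge 1+\min\{i(H),i(L)\}$, and the non-cancellation cases. Any true version of the statement needs an extra hypothesis that rules out this cancellation.
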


\begin{proof}
We induct on the number of maximal cliques of $G$.

If $G$ is a clique, then $P_G(x)=1+|V(G)|x$. So $M(G)=0$ while $i(G)=1$. Hence $M(G)\le i(G)$.

Assume now that $G$ is not a clique. Fix a leaf maximal clique $K$ in the clique tree as in Notation \ref{not:block_graphs} where $K'$ be
its unique neighbor, and 
\[
s:=K\cap K',\qquad C:=K\setminus\{s\},\qquad r:=|C|\ge 1,
\]
together with the induced subgraphs
\[
H:=G\setminus K,\qquad L:=G\setminus N_G[s].
\]

Since $K$ is a clique, any independent dominating set $D$ meets $K$ in at most one vertex.
Moreover $D$ must dominate every vertex of $C$, and the only vertices that can dominate $C$ lie in $K$.
Thus $D$ contains either (a) a vertex $v\in C$, or (b) the cut vertex $s$.

If $v\in C$ is chosen, then $v$ has no neighbors outside $K$. So $D\setminus\{v\}$ is an independent
dominating set of $H$. Hence $|D|\ge 1+i(H)$.
If $s$ is chosen, then no vertex of $N_G[s]$ may lie in $D$, and $D\setminus\{s\}$ is an independent
dominating set of $L$. Hence $|D|\ge 1+i(L)$.
Conversely, choosing any $v\in C$ together with an independent dominating set of $H$ gives an
independent dominating set of $G$. Similarly, choosing $s$ together with an independent dominating set of $L$
also gives one. Therefore
\begin{equation}\label{eq:i-rec}
i(G) = 1+\min\{i(H),i(L)\}.
\end{equation}

Set $u:=x+1$ and $Q_F(u):=P_F(u-1)$, so that $M(F)=\ord_{u=0}Q_F(u)$.
The leaf-clique recursion from Lemma \ref{lem:leaf-clique-recursion} gives
\[
P_G(x)=(1+rx)P_H(x)+xP_L(x),
\]
and substituting $x=u-1$ yields
\begin{equation}\label{eq:Q-rec}
Q_G(u)=\bigl(ru-(r-1)\bigr)Q_H(u)+(u-1)Q_L(u).
\end{equation}
Let $m:=\min\{M(H),M(L)\}$. Write $Q_H(u)=u^m h(u)$ and $Q_L(u)=u^m \ell(u)$ with $h,\ell\in\Bbbk[u]$.
Then \eqref{eq:Q-rec} becomes
\[
Q_G(u)=u^m\Bigl(A(u)+uB(u)\Bigr),
\]
where \(A(u)=-(r-1)h(u)-\ell(u)\) and \( B(u)=rh(u)+\ell(u)\).
Hence
\[
M(G)=m+\ord_{u=0}\bigl(A(u)+uB(u)\bigr).
\]
If $A(0)\neq 0$, then $\ord_{u=0}(A+uB)=0$, so $M(G)=m$. If $A(0)=0$, then $\ell(0)=-(r-1)h(0)$, and consequently
\[
B(0)=rh(0)+\ell(0)=rh(0)-(r-1)h(0)=h(0)\neq 0.
\]
Indeed, if $h(0)=0$, then $\ell(0)=0$ as well. So both $Q_H(u)=u^m h(u)$ and
$Q_L(u)=u^m \ell(u)$ are divisible by $u^{m+1}$, which implies
$M(H),M(L)\ge m+1$, contradicting $m=\min\{M(H),M(L)\}$.
Hence $\ord_{u=0}(A(u)+uB(u))=1$ in this case. Therefore,
\begin{equation}\label{eq:M-upper}
M(G)\ \le\ m+1\ =\ 1+\min\{M(H),M(L)\}.
\end{equation}

By the induction hypothesis (applied componentwise if $H$ or $L$ is disconnected),
we have $M(H)\le i(H)$ and $M(L)\le i(L)$. Combining \eqref{eq:i-rec} and \eqref{eq:M-upper},
\[
M(G)\ \le\ 1+\min\{M(H),M(L)\}\ \le\ 1+\min\{i(H),i(L)\}\ =\ i(G),
\]
which proves the first claim.

Hence
\[
\reg(R/J(G))-\deg h_{R/J(G)}(t)=M(G)-i(G)+1\le 1.\qedhere
\]
\end{proof}

\section{Possible 
$(\reg,\deg h)$-pairs and conjectural bounds}
\label{sec:possible-values}

Fix an integer $n\ge 2$ and let $G$ range over connected graphs on $n$ vertices. In this section, we  provide bounds on $\reg(R/J(G))$ and $ \deg h_{R/J(G)}$. Our general goal is to describe the set of pairs
\[
\Bigl(\reg(R/J(G)),\ \deg h_{R/J(G)}(t)\Bigr)
\]
that occur.

\begin{figure}[ht]
\centering
\begin{tikzpicture}[scale=0.65]
    \draw[->] (0,0) -- (8.5,0) node[right] {$\text{reg}$};
    \draw[->] (0,0) -- (0,8.5) node[above] {$\text{deg}$};

    \foreach \x in {1,2,3,4,5,6,7,8} {
        \draw (\x,-0.1) -- (\x,0.1);
        \node[below] at (\x,-0.2) {\x};
    }
    
    \foreach \y in {1,2,3,4,5,6,7,8} {
        \draw (-0.1,\y) -- (0.1,\y);
        \node[left] at (-0.1,\y) {\y};
    }

    \foreach \x/\y in {
        3/3, 4/3, 4/4, 4/5, 4/6, 4/7,
        5/4, 5/5, 5/6, 5/7,
        6/4, 6/5, 6/6, 6/7,
        7/5, 7/6, 7/7
    } {
        \filldraw[blue] (\x,\y) circle (3pt);
    }
\end{tikzpicture}
\caption{All possible $(\reg, \deg h)$ values for connected graphs on $n=9$ vertices.}
\label{fig:reg_deg_9_final}
\end{figure}
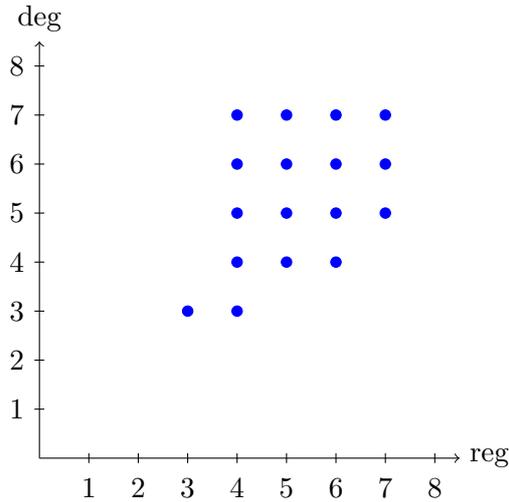

\begin{remark}
Let $G$ be a finite simple graph on $n$ vertices. Then  one has $\deg h_{R/J(G)}(t)\le n-2$ by Lemma~\ref{lem:HS1}.
Moreover, $\reg(R/J(G))=\pdim(R/I(G))-1$ and $\pdim(R/I(G))\le n-1$. So $\reg(R/J(G))\le n-2$.
Notice that both bounds are sharp.
\end{remark}

To relate $\deg h_{R/J(G)}(t)$ to combinatorial data, recall from
\Cref{thm:deg_edge_cover} and \Cref{lem:deg-via-ord} that 
\begin{equation}\label{eq:M-degree-identities}
M(G)=(n-2)-\deg h_{R/J(G)}(t)=\alpha(G)-\deg h_{R/I(G)}(t).
\end{equation}

In what follows, we record a convenient region in which both $\deg h_{R/J(G)}(t)$ and $\reg(R/J(G))$
are forced to be relatively large.

\begin{cor}\label{cor:alpha-small-forces-large}
If $\alpha(G)\le \left\lfloor\frac{n}{2}\right\rfloor+1$, then
\begin{enumerate}
    \item[(a)] \(\deg h_{R/J(G)}(t)  \ \ge\ \left\lceil\frac{n}{2}\right\rceil-2\),
    \item[(b)] \(\reg(R/J(G)) \ \ge\ \left\lceil\frac{n}{2}\right\rceil-2\).
\end{enumerate}
\end{cor}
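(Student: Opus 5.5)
Both parts follow from the same fact: each invariant is bounded below by $n-\alpha(G)-1$. The hypothesis on $\alpha(G)$ then converts this into the stated bound. Throughout, $G$ is connected with $n\ge 2$ vertices, so $G$ has at least one edge. This is what lets us use \Cref{lem:M_bound} and the height computation below.

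For (a), I would use \Cref{lem:deg-via-ord}, which gives $\deg h_{R/J(G)}(t)=n-2-M(G)$, together with \Cref{lem:M_bound}, which gives $M(G)\le \alpha(G)-1$. Combining them,
\[
\deg h_{R/J(G)}(t)\ \ge\ n-1-\alpha(G).
\]
The same lower bound can also be read off directly from \Cref{cor:degree}(a). Substituting $\alpha(G)\le \lfloor n/2\rfloor+1$ yields $\deg h_{R/J(G)}(t)\ge n-\lfloor n/2\rfloor-2=\lceil n/2\rceil-2$.

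For (b), I would use Terai's duality $\reg(R/J(G))=\pdim(R/I(G))-1$. Since the projective dimension of $R/I$ is at least $\height I$, it suffices to compute the height of $I(G)$. The minimal primes of $I(G)$ are generated by the minimal vertex covers of $G$. Complements of vertex covers are exactly the independent sets, so the minimum vertex cover has size $n-\alpha(G)$, and hence $\height I(G)=n-\alpha(G)$. This gives
\[
\reg(R/J(G))\ \ge\ n-\alpha(G)-1\ \ge\ \lceil n/2\rceil-2,
\]
again by the hypothesis on $\alpha(G)$.

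I do not expect a real obstacle. The only points needing care are the existence of an edge (guaranteed by connectivity and $n\ge 2$) and the identification of $\height I(G)$ with the minimum vertex cover size. The floor/ceiling arithmetic $n-\lfloor n/2\rfloor=\lceil n/2\rceil$ is routine.
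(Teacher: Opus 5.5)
Your proposal is correct and follows essentially the same route as the paper. Part (a) combines $\deg h_{R/J(G)}(t)=n-2-M(G)$ with \Cref{lem:M_bound}, and part (b) uses Terai duality with the bound $\pdim(R/I(G))\ge \tau(G)=n-\alpha(G)$; the only cosmetic difference is that the paper passes through $\pdim(R/I(G))\ge \mathrm{bight}\,I(G)\ge\tau(G)$, whereas you use $\height I(G)=\tau(G)$ directly, which gives the same bound.
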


\begin{proof}
(a) By Lemma~\ref{lem:M_bound}, we have
\[
M(G)\ \le\ \alpha(G)-1\ \le\ \left\lfloor\frac{n}{2}\right\rfloor.
\]
Using $M(G)=(n-2)-\deg h_{R/J(G)}(t)$ from \eqref{eq:M-degree-identities} gives
\[
\deg h_{R/J(G)}(t)=n-2-M(G)\ \ge\ n-2-\left\lfloor\frac{n}{2}\right\rfloor
=\left\lceil\frac{n}{2}\right\rceil-2.
\]

(b) For the regularity bound, note that $\reg(R/J(G))=\pdim(R/I(G))-1$. A standard lower bound
(see \cite[Corollary~3.33]{MoreyVillarreal}) gives
\[
\pdim(R/I(G))\ \ge\ \mathrm{bight}I(G),
\]
where $\mathrm{bight}I(G)$ is the maximum size of a minimal vertex cover. In particular,
$\mathrm{bight}I(G)\ge \tau(G)$, where $\tau(G)$ is the minimum size of a vertex cover.
Since $\tau(G)=n-\alpha(G)$, we obtain
\[
\pdim(R/I(G))\ \ge\ n-\alpha(G)
\ \ge\ n-\left(\left\lfloor\frac{n}{2}\right\rfloor+1\right)
=\left\lceil\frac{n}{2}\right\rceil-1,
\]
and therefore $\reg(R/J(G))=\pdim(R/I(G))-1\ge \left\lceil\frac{n}{2}\right\rceil-2$.
\end{proof}

The hypothesis $\alpha(G)\le \lfloor n/2\rfloor+1$ in Corollary~\ref{cor:alpha-small-forces-large}
cannot be removed: when $\alpha(G)$ is large, both $M(G)$ and $\pdim(R/I(G))$ can behave quite differently.
The following family (a special case of the split graphs from \Cref{sec:split-graphs}) is a convenient source
of examples.

\begin{remark}
Fix integers $p\ge 2$ and $q\ge 1$. Let $H_{p,q}$ be the graph obtained from a clique $K_p$
on vertices $x_1,\dots,x_p$ by attaching $q$ pendant leaves to each $x_i$.
Then $H_{p,q}$ is connected and chordal (indeed split), with $n=p(q+1)$ vertices. Then by \Cref{prop:split-multiplicity} and \Cref{cor:split-deg-h} (or, by \Cref{lem:q-whisker-indep-poly}), we have
\[
M(H_{p,q})=(p-1)q 
\qquad \text{ and } \qquad
\deg h_{R/J(H_{p,q})}(t)= p+q-2
\]
Moreover, one can verify that $i(H_{p,q})=1+(p-1)q$ so that
\[
\pdim(R/I(H_{p,q}))=n-i(H_{p,q})=p+q-1.
\]
Consequently,
\[
\Bigl(\reg(R/J(H_{p,q})),\ \deg h_{R/J(H_{p,q})}(t)\Bigr)=(p+q-2,\ p+q-2).
\]
In particular, $M(H_{p,q})$ can exceed $\lfloor n/2\rfloor$ when $p$ is large, showing that the
conclusion of Corollary~\ref{cor:alpha-small-forces-large} is genuinely tied to the hypothesis on $\alpha(G)$.

For instance, take $(p,q)=(4,4)$. Then $n=20$ and $\alpha(H_{4,4})=16>\lfloor n/2\rfloor+1=11$.
Here $\pdim(R/I(H_{4,4}))=p+q-1=7<\lceil n/2\rceil-1=9$, so the lower bound on $\reg(R/I(G))$
from Corollary~\ref{cor:alpha-small-forces-large} fails without the assumption on $\alpha(G)$.
\end{remark}

Using Macaulay2, we computed all possible pairs $(\reg,\deg h)$ for connected graphs up to $n=12$
(see Figure~\ref{fig:reg_deg_9_final} for $n=9$). These computations suggest that the realized points lie in a
band around the diagonal, leading to the following conjecture.

\begin{conj}\label{conj:bounds-reg-deg}
Let $G$ be a connected graph on $n$ vertices. Then
\[
\bigl|\reg(R/J(G))-\deg h_{R/J(G)}(t)\bigr|
\ \le\ \left\lceil\frac{n}{2}\right\rceil-2.
\]
\end{conj}

Not every pair $(r,d)\in \mathbb{Z}_{\ge 0}^2$ occurs as
\[
\bigl(\reg(R/J(G)),\ \deg h_{R/J(G)}(t)\bigr)
\]
for a connected graph $G$.  We record a simple obstruction.

\begin{lem}\label{lem:unrealizable}
There is no connected graph $G$ such that
\[
\bigl(\reg(R/J(G)),\ \deg h_{R/J(G)}(t)\bigr)=(1,d)
\]
with $d\ge 2$.
\end{lem}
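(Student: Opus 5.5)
Since $\reg(R/J(G))=\pdim(R/I(G))-1$, the hypothesis $\reg=1$ means $\pdim(R/I(G))=2$. The plan is to show that this forces $G$ to be very small, and then to check directly that $\deg h_{R/J(G)}(t)\le 1$.

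We begin with the same lower bound used in \Cref{cor:alpha-small-forces-large}(b): $\pdim(R/I(G))\ge \mathrm{bight}\, I(G)\ge \tau(G)=n-\alpha(G)$. Hence $\alpha(G)\ge n-2$, so $G$ has a vertex cover with at most two vertices.

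Next we bound $n$, using the big height more carefully. Let $\{a,b\}$ be a vertex cover, or $\{a\}$ if $G$ is a star.

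\begin{itemize}
\item If $G$ is a star $K_{1,n-1}$, its minimal vertex covers are $\{a\}$ and the set of all leaves. So $\mathrm{bight}=n-1\le 2$, giving $n\le 3$.
\item Otherwise $\tau=2$, and every edge meets $a$ or $b$. Since $G$ is connected and not a star, every edge is at $a$ or at $b$, except possibly the edge $ab$. The set $\bigl(N(a)\setminus\{b\}\bigr)\cup\bigl(N(b)\setminus\{a\}\bigr)$, together with $a$ or $b$ when needed to cover $ab$, contains a minimal vertex cover.
\end{itemize}

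Chasing this, $\mathrm{bight}\le 2$ bounds $n$ by a small constant, roughly $n\le 4$. The graphs that survive should be $P_2$, $P_3$, $K_3$, $P_4$, $C_4$, and perhaps the paw or diamond; each of these we filter by $\pdim=2$. Alternatively, we can use that $\pdim(R/I(G))\ge$ the size of any minimal vertex cover, together with connectedness. The main obstacle is doing this case analysis cleanly. As a shortcut, we would first try the Hochster/Terai fact that $\reg(R/J(G))=1$ means $J(G)$ has a linear resolution in degree $2$. By Eagon--Reiner this means $R/I(G)$ is Cohen--Macaulay of height $\ldots$; more concretely, $\pdim(R/I(G))=2$ with $\tau(G)=2$ forces $I(G)$ to be Cohen--Macaulay of height $2$ or to have few vertices.

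Finally, the degree bound. \Cref{cor:degree}(a) gives $\deg h\le n-2$, and $\deg h=n-2-M(G)$ by \Cref{lem:deg-via-ord}. If $n\le 3$ we are done immediately, since $\deg h\le 1$. For each remaining small graph ($n=4$, e.g.\ $P_4$ or $C_4$) we compute $P_G(-1)$ and $M(G)$ directly:
\begin{itemize}
\item $P_{P_4}=1+4x+3x^2$ has $M=1$, giving $\deg h=1$;
\item $P_{C_4}=1+4x+2x^2$ gives $P(-1)=-1$ and $\deg h=2$. So $C_4$ must be excluded by its regularity: $\pdim(R/I(C_4))=3$ by direct computation or Hochster's formula, hence $\reg=2$.
\end{itemize}
In every surviving case we obtain $\deg h\le 1$, so $(1,d)$ with $d\ge 2$ is never realized.
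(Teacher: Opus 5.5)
Your opening matches the paper's inequality in different language. Complements of minimal vertex covers are exactly the maximal independent sets, so $\mathrm{bight}\,I(G)=n-i(G)$, and your bound $\pdim(R/I(G))\ge \mathrm{bight}\,I(G)$ is the Dao--Schweig bound $\pdim(R/I(G))\ge n-i(G)$ that the paper uses. You correctly conclude $\tau(G)\le 2$ and $\mathrm{bight}\,I(G)\le 2$.

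\textbf{The gap.} The step that bounds $n$ is the heart of the lemma, and it is not proved. Your only supporting observation is that $(N(a)\setminus\{b\})\cup(N(b)\setminus\{a\})$, possibly with $a$ or $b$ added, \emph{contains} a minimal vertex cover. That gives no lower bound on $\mathrm{bight}\,I(G)$, because the minimal cover inside it could be small. So ``chasing this \dots\ roughly $n\le 4$'' is unsupported, and nothing in the proposal rules out non-star graphs with $n\ge 5$.

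The endgame inherits this. You list the paw and the diamond as possible survivors but only check $P_4$ and $C_4$. Both of those graphs have $\deg h_{R/J(G)}(t)=2$, since $1+4x+2x^2$ and $1+4x+x^2$ do not vanish at $-1$. So they must genuinely be excluded before your closing sentence ``in every surviving case $\deg h\le 1$'' is true. The Eagon--Reiner aside does not help: even granting Cohen--Macaulayness when $\tau(G)=2$, you would still need a classification.

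\textbf{The missing idea} is to exhibit a \emph{large} minimal vertex cover.
\begin{itemize}
\item \emph{The paper's route.} It uses $i(G)\le n-\Delta(G)$: a maximal independent set containing $v$ has as complement a minimal vertex cover containing $N(v)$. This forces $\Delta(G)\le 2$, so $G$ is a path or a cycle. Then $i(G)\le\alpha(G)$ bounds $n$, and a direct check leaves only $C_3$, $P_3$, $P_4$.
\item \emph{Finishing your route.} Let $\{a,b\}$ be your size-two cover. If $ab\notin E(G)$, every vertex of $V\setminus\{a,b\}$ has a neighbor in $\{a,b\}$. Hence $V\setminus\{a,b\}$ is itself a minimal vertex cover, and $n-2\le 2$. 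If $ab\in E(G)$, then $N(a)$ and $N(b)$ are minimal vertex covers. So $\deg a,\deg b\le 2$, and $n\le \deg a+\deg b\le 4$. This also eliminates the paw and the diamond, whose size-two covers consist of adjacent vertices one of which has degree $3$.
\end{itemize}
At $n=4$ only $P_4$ and $C_4$ then remain, and your computations finish the proof.

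With this step supplied, your route through a size-two vertex cover is a valid alternative to the paper's reduction via maximum degree to paths and cycles. As written, though, the bound on $n$ is asserted rather than proved.
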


\begin{proof}
Let $G$ be a connected graph on $n$ vertices.  Then we have
\[
1= \reg(R/J(G))=\pdim(R/I(G))-1 \implies \pdim(R/I(G))=2.
\]
Using the following lower bound from \cite[Proposition 4.7]{ProjDim}
\[
\pdim(R/I(G))\ \ge\ n-i(G),
\]
we obtain $i(G)\ge n-2$.

Let $\Delta(G)$ be the maximum vertex degree of $G$ and choose a vertex $v$ with $\deg(v)=\Delta(G)$.
Any maximal independent set containing $v$ can use at most the vertices outside $N(v)$, so it has size
at most $n-\Delta(G)$.  Hence $i(G)\le n-\Delta(G)$, and therefore $\Delta(G)\le 2$.
Since $G$ is connected, it follows that $G$ is either a path $P_n$ or a cycle $C_n$.

Finally, since $i(G)\le \alpha(G)$, the formulas $\alpha(C_n)=\lfloor n/2\rfloor$ and
$\alpha(P_n)=\lceil n/2\rceil$ yield $n\le 4$ in the cycle case and $n\le 5$ in the path case.
A direct check on these possible graphs shows that the only connected graphs with
$\reg(R/J(G))=1$ are $C_3$, $P_3$, and $P_4$, and in each case $\deg h_{R/J(G)}(t)=1$.
Thus no connected graph realizes $(1,d)$ with $d\ge 2$.
\end{proof}

More broadly, our computations suggest that unrealizable pairs $(r,d)$ are constrained by simple linear
inequalities.

\begin{conj}\label{conj:unrealizablepairs}
If either $\displaystyle r\leq \left\lceil \frac{d}{2}\right\rceil$ or $\displaystyle d\geq \left\lceil \frac{2r-1}{3}\right\rceil$,
then there is no connected graph $G$ such that
\[
\bigl(\reg(R/J(G)),\ \deg h_{R/J(G)}(t)\bigr)=(r,d).
\]
\end{conj}

\section{Conclusion and Future Directions}

A recurring goal throughout this paper has been to understand, for fixed $n$, which pairs
\[
\bigl(\reg(R/J(G)),\ \deg h_{R/J(G)}(t)\bigr)
\]
arise from connected graphs on $n$ vertices, and to identify graph families that populate large
portions of the feasible region. The results of \Cref{sec:rad2-trees,sec:split-graphs,sec:deg=reg-1,sec:reg=n-2,sec:block_graphs}
together with the computational data in \Cref{sec:possible-values} suggest that a surprisingly small
collection of structured classes already accounts for most observed pairs.

In particular, trees of radius at most two appear to realize most pairs on the diagonal $\deg h=\reg$
and above it, while our chordal constructions provide systematic families below the diagonal.
This motivates the following question.

\begin{question}\label{q:classes-realize}
Fix $n$.  Do there exist explicit graph classes (for instance, radius-$2$ graphs and/or chordal graphs)
whose members realize every pair
\[
\bigl(\reg(R/J(G)),\ \deg h_{R/J(G)}(t)\bigr)
\]
that occurs among connected graphs on $n$ vertices?
\end{question}

We hope that the invariant $M(G)$, together with its recursions and behavior under graph operations,
provides a useful framework for further study of these realizability problems.

\textbf{Acknowledgements.} The authors thank Tài Huy Hà for encouraging them to investigate cover ideals as a follow-up to \cite{biermann2024degree}. S.~Kara was supported by NSF Grant DMS-2418805.  Computations were performed on the Colgate Supercomputer, which is partially supported by NSF Grant OAC-2346664.

\bibliographystyle{abbrv}
\bibliography{ref}

@article {hibi2022regularity,
    AUTHOR = {Hibi, Takayuki and Kimura, Kyouko and Matsuda, Kazunori and
              Van Tuyl, Adam},
     TITLE = {The regularity and {$h$}-polynomial of {C}ameron-{W}alker
              graphs},
   JOURNAL = {Enumer. Comb. Appl.},
  FJOURNAL = {Enumerative Combinatorics and Applications},
    VOLUME = {2},
      YEAR = {2022},
    NUMBER = {3},
     PAGES = {Paper No. S2R17, 12},
      ISSN = {2710-2335},
   MRCLASS = {13A70 (05C70 05E40 13D02)},
  MRNUMBER = {4468202},
       DOI = {10.54550/eca2022v2s3r17},
       URL = {https://doi.org/10.54550/eca2022v2s3r17},
}

@article {hibi2018regularity,
    AUTHOR = {Hibi, Takayuki and Matsuda, Kazunori},
     TITLE = {Regularity and {$h$}-polynomials of monomial ideals},
   JOURNAL = {Math. Nachr.},
  FJOURNAL = {Mathematische Nachrichten},
    VOLUME = {291},
      YEAR = {2018},
    NUMBER = {16},
     PAGES = {2427--2434},
      ISSN = {0025-584X,1522-2616},
   MRCLASS = {05E40 (13F20 13H10)},
  MRNUMBER = {3884268},
MRREVIEWER = {Hassan\ Haghighi},
       DOI = {10.1002/mana.201700476},
       URL = {https://doi.org/10.1002/mana.201700476},
}

@article {hibi_binomial,
    AUTHOR = {Hibi, Takayuki and Matsuda, Kazunori},
     TITLE = {Regularity and {$h$}-polynomials of binomial edge ideals},
   JOURNAL = {Acta Math. Vietnam.},
  FJOURNAL = {Acta Mathematica Vietnamica},
    VOLUME = {47},
      YEAR = {2022},
    NUMBER = {1},
     PAGES = {369--374},
      ISSN = {0251-4184,2315-4144},
   MRCLASS = {05E40 (13C70 13H10)},
  MRNUMBER = {4406578},
MRREVIEWER = {Margherita\ Barile},
       DOI = {10.1007/s40306-021-00416-3},
       URL = {https://doi.org/10.1007/s40306-021-00416-3},
}

@article {hibi2019regularity,
    AUTHOR = {Hibi, Takayuki and Matsuda, Kazunori and Van Tuyl, Adam},
     TITLE = {Regularity and {$h$}-polynomials of edge ideals},
   JOURNAL = {Electron. J. Combin.},
  FJOURNAL = {Electronic Journal of Combinatorics},
    VOLUME = {26},
      YEAR = {2019},
    NUMBER = {1},
     PAGES = {Paper No. 1.22, 11},
      ISSN = {1077-8926},
   MRCLASS = {13D02 (05C69 05C70 05E40 13D40)},
  MRNUMBER = {3919621},
MRREVIEWER = {Oana\ Stefania\ Olteanu},
       DOI = {10.37236/8247},
       URL = {https://doi.org/10.37236/8247},
}

@article {favacchio2020regularity,
    AUTHOR = {Favacchio, Giuseppe and Keiper, Graham and Van Tuyl, Adam},
     TITLE = {Regularity and {$h$}-polynomials of toric ideals of graphs},
   JOURNAL = {Proc. Amer. Math. Soc.},
  FJOURNAL = {Proceedings of the American Mathematical Society},
    VOLUME = {148},
      YEAR = {2020},
    NUMBER = {11},
     PAGES = {4665--4677},
      ISSN = {0002-9939,1088-6826},
   MRCLASS = {13F65 (05E40 13A70 13D02 13D40 13P10 14M25)},
  MRNUMBER = {4143385},
MRREVIEWER = {Alessio\ D'Al\`i},
       DOI = {10.1090/proc/15126},
       URL = {https://doi.org/10.1090/proc/15126},
}

@book {herzog2011monomial,
    AUTHOR = {Herzog, J\"urgen and Hibi, Takayuki},
     TITLE = {Monomial ideals},
    SERIES = {Graduate Texts in Mathematics},
    VOLUME = {260},
 PUBLISHER = {Springer-Verlag London, Ltd., London},
      YEAR = {2011},
     PAGES = {xvi+305},
      ISBN = {978-0-85729-105-9},
   MRCLASS = {13D02 (05E40 13D40 13F55 13P10)},
  MRNUMBER = {2724673},
MRREVIEWER = {Rahim\ Zaare-Nahandi},
       DOI = {10.1007/978-0-85729-106-6},
       URL = {https://doi.org/10.1007/978-0-85729-106-6},
}

@article{jacques2005betti,
  title={The {B}etti numbers of forests},
  author={Jacques, Sean and Katzman, Mordechai},
  journal={arXiv preprint math/0501226},
  year={2005}
}

@article{ProjDim,
    AUTHOR = {Dao, Hailong and Schweig, Jay},
     TITLE = {Projective dimension, graph domination parameters, and
              independence complex homology},
   JOURNAL = {J. Combin. Theory Ser. A},
  FJOURNAL = {Journal of Combinatorial Theory. Series A},
    VOLUME = {120},
      YEAR = {2013},
    NUMBER = {2},
     PAGES = {453--469},
      ISSN = {0097-3165,1096-0899},
   MRCLASS = {05E45},
  MRNUMBER = {2995051},
MRREVIEWER = {Micha\l \ Adamaszek},
       DOI = {10.1016/j.jcta.2012.09.005},
       URL = {https://doi.org/10.1016/j.jcta.2012.09.005},
}

@incollection {terai2007,
    AUTHOR = {Terai, Naoki},
     TITLE = {Alexander duality in {S}tanley-{R}eisner rings},
 BOOKTITLE = {Affine algebraic geometry},
     PAGES = {449--462},
 PUBLISHER = {Osaka Univ. Press, Osaka},
      YEAR = {2007},
      ISBN = {978-4-87259-226-9},
   MRCLASS = {13F55 (05E99 13D02)},
  MRNUMBER = {2330484},
MRREVIEWER = {Christopher\ A.\ Francisco},
}

@article {biermann2024degree,
    AUTHOR = {Biermann, Jennifer and Kara, Selvi and O'Keefe, Augustine and
              Skelton, Joseph and Sosa Castillo, Gabriel},
     TITLE = {Degree of {$h$}-polynomials of edge ideals},
   JOURNAL = {J. Algebraic Combin.},
  FJOURNAL = {Journal of Algebraic Combinatorics. An International Journal},
    VOLUME = {62},
      YEAR = {2025},
    NUMBER = {1},
     PAGES = {Paper No. 2, 26},
      ISSN = {0925-9899,1572-9192},
   MRCLASS = {05E40},
  MRNUMBER = {4930041},
       DOI = {10.1007/s10801-025-01411-9},
       URL = {https://doi.org/10.1007/s10801-025-01411-9},
}

@incollection {MoreyVillarreal,
    AUTHOR = {Morey, Susan and Villarreal, Rafael H.},
     TITLE = {Edge ideals: algebraic and combinatorial properties},
 BOOKTITLE = {Progress in commutative algebra 1},
     PAGES = {85--126},
 PUBLISHER = {de Gruyter, Berlin},
      YEAR = {2012},
      ISBN = {978-3-11-025034-3},
   MRCLASS = {13F20 (05C10 05C25 05C65 05E40 13-02 13F55)},
  MRNUMBER = {2932582},
MRREVIEWER = {Louiza\ Fouli},
}

@book {Hibi_book,
    AUTHOR = {Hibi, Takayuki},
     TITLE = {Algebraic combinatorics on convex polytopes},
 PUBLISHER = {Carslaw Publications, Glebe},
      YEAR = {1992},
     PAGES = {x+164},
      ISBN = {1-875399-04-6},
   MRCLASS = {52-02 (05E45 13F55 57Q05)},
  MRNUMBER = {3183743},
}

@book {miller_thesis,
    AUTHOR = {Miller, Ezra Nathan},
     TITLE = {Resolutions and duality for monomial ideals},
      NOTE = {Thesis (Ph.D.)--University of California, Berkeley},
 PUBLISHER = {ProQuest LLC, Ann Arbor, MI},
      YEAR = {2000},
     PAGES = {136},
      ISBN = {978-0599-86011-7},
   MRCLASS = {99-05},
  MRNUMBER = {2701037},
       URL =
              {http://gateway.proquest.com/openurl?url_ver=Z39.88-2004&rft_val_fmt=info:ofi/fmt:kev:mtx:dissertation&res_dat=xri:pqdiss&rft_dat=xri:pqdiss:9979735},
}

@article {levit2003roots,
    AUTHOR = {Levit, Vadim E. and Mandrescu, Eugen},
     TITLE = {On the roots of independence polynomials of almost all very
              well-covered graphs},
   JOURNAL = {Discrete Appl. Math.},
  FJOURNAL = {Discrete Applied Mathematics. The Journal of Combinatorial
              Algorithms, Informatics and Computational Sciences},
    VOLUME = {156},
      YEAR = {2008},
    NUMBER = {4},
     PAGES = {478--491},
      ISSN = {0166-218X,1872-6771},
   MRCLASS = {05C69 (05C05)},
  MRNUMBER = {2379079},
       DOI = {10.1016/j.dam.2006.06.016},
       URL = {https://doi.org/10.1016/j.dam.2006.06.016},
}

@article {CutlerKahl2016,
    AUTHOR = {Cutler, Jonathan and Kahl, Nathan},
     TITLE = {A note on the values of independence polynomials at {$-1$}},
   JOURNAL = {Discrete Math.},
  FJOURNAL = {Discrete Mathematics},
    VOLUME = {339},
      YEAR = {2016},
    NUMBER = {11},
     PAGES = {2723--2726},
      ISSN = {0012-365X,1872-681X},
   MRCLASS = {05C31 (05C69)},
  MRNUMBER = {3518424},
       DOI = {10.1016/j.disc.2016.05.019},
       URL = {https://doi.org/10.1016/j.disc.2016.05.019},
}

@article {ChudnovskySeymour2007,
    AUTHOR = {Chudnovsky, Maria and Seymour, Paul},
     TITLE = {The roots of the independence polynomial of a clawfree graph},
   JOURNAL = {J. Combin. Theory Ser. B},
  FJOURNAL = {Journal of Combinatorial Theory. Series B},
    VOLUME = {97},
      YEAR = {2007},
    NUMBER = {3},
     PAGES = {350--357},
      ISSN = {0095-8956,1096-0902},
   MRCLASS = {05C69},
  MRNUMBER = {2305888},
MRREVIEWER = {Steven\ D.\ Noble},
       DOI = {10.1016/j.jctb.2006.06.001},
       URL = {https://doi.org/10.1016/j.jctb.2006.06.001},
}

@article {brown2000roots,
    AUTHOR = {Brown, J. I. and Dilcher, K. and Nowakowski, R. J.},
     TITLE = {Roots of independence polynomials of well covered graphs},
   JOURNAL = {J. Algebraic Combin.},
  FJOURNAL = {Journal of Algebraic Combinatorics. An International Journal},
    VOLUME = {11},
      YEAR = {2000},
    NUMBER = {3},
     PAGES = {197--210},
      ISSN = {0925-9899,1572-9192},
   MRCLASS = {05C69},
  MRNUMBER = {1771611},
MRREVIEWER = {E.\ J.\ Farrell},
       DOI = {10.1023/A:1008705614290},
       URL = {https://doi.org/10.1023/A:1008705614290},
}

@article {peters2019conjecture,
    AUTHOR = {Peters, Han and Regts, Guus},
     TITLE = {On a conjecture of {S}okal concerning roots of the
              independence polynomial},
   JOURNAL = {Michigan Math. J.},
  FJOURNAL = {Michigan Mathematical Journal},
    VOLUME = {68},
      YEAR = {2019},
    NUMBER = {1},
     PAGES = {33--55},
      ISSN = {0026-2285,1945-2365},
   MRCLASS = {05C31 (05C69 37F10 82B20)},
  MRNUMBER = {3934603},
MRREVIEWER = {Helin\ Gong},
       DOI = {10.1307/mmj/1541667626},
       URL = {https://doi.org/10.1307/mmj/1541667626},
}

@article {brown2004location,
    AUTHOR = {Brown, J. I. and Hickman, C. A. and Nowakowski, R. J.},
     TITLE = {On the location of roots of independence polynomials},
   JOURNAL = {J. Algebraic Combin.},
  FJOURNAL = {Journal of Algebraic Combinatorics. An International Journal},
    VOLUME = {19},
      YEAR = {2004},
    NUMBER = {3},
     PAGES = {273--282},
      ISSN = {0925-9899,1572-9192},
   MRCLASS = {05C69},
  MRNUMBER = {2071474},
MRREVIEWER = {E.\ J.\ Farrell},
       DOI = {10.1023/B:JACO.0000030703.39946.70},
       URL = {https://doi.org/10.1023/B:JACO.0000030703.39946.70},
}

@article {bencs2018trees,
    AUTHOR = {Bencs, Ferenc},
     TITLE = {On trees with real-rooted independence polynomial},
   JOURNAL = {Discrete Math.},
  FJOURNAL = {Discrete Mathematics},
    VOLUME = {341},
      YEAR = {2018},
    NUMBER = {12},
     PAGES = {3321--3330},
      ISSN = {0012-365X,1872-681X},
   MRCLASS = {05C31 (05C05)},
  MRNUMBER = {3862630},
MRREVIEWER = {Helin\ Gong},
       DOI = {10.1016/j.disc.2018.06.033},
       URL = {https://doi.org/10.1016/j.disc.2018.06.033},
}

@inproceedings {split_graphs,
    AUTHOR = {Foldes, St\'ephane and Hammer, Peter L.},
     TITLE = {Split graphs},
 BOOKTITLE = {Proceedings of the {E}ighth {S}outheastern {C}onference on
              {C}ombinatorics, {G}raph {T}heory and {C}omputing ({L}ouisiana
              {S}tate {U}niv., {B}aton {R}ouge, {L}a., 1977)},
    SERIES = {Congress. Numer.},
    VOLUME = {No. XIX},
     PAGES = {311--315},
 PUBLISHER = {Utilitas Math., Winnipeg, MB},
      YEAR = {1977},
      ISBN = {0-919628-19-2},
   MRCLASS = {05C99},
  MRNUMBER = {505860},
}

\end{document}